\newtheorem{condition**}{A*}
\newtheorem{condition***}{C*}
\newtheorem{condition*}{C}
\newtheorem{definition}{Definition}[section]
\newtheorem{theorem}{Theorem}[section]
\newtheorem{lemma}{Lemma}[section]
\newtheorem{remark}{Remark}[section]
\newenvironment{keywords}{{\bf Key words: }}{}
\renewcommand{\thefootnote}{\fnsymbol{footnote}}
\begin{document}

\title{Linear-Quadratic-Gaussian Mixed Mean-field Games with Heterogeneous Input Constraints}

\author{Ying Hu$\;^{a}$, Jianhui Huang$\;^{b}$, Tianyang Nie$\;^{c,\ast}$\bigskip\\{\small ~$^{a}$IRMAR, Universit\'e Rennes 1, Campus de Beaulieu, 35042 Rennes Cedex, France}
\\{\small and School of
Mathematical Sciences, Fudan University, Shanghai 200433, China}\\{\small $^{b}$Department of Applied Mathematics, The Hong Kong Polytechnic University, Hong Kong, China}\\{\small ~$^{c}$School of Mathematics, Shandong University, Jinan, Shandong
250100, China}}
\maketitle

\begin{abstract}
We consider a class of linear-quadratic-Gaussian mean-field games with a major agent and considerable heterogeneous minor agents in the presence of mean-field interactions. The individual admissible controls are constrained in closed convex subsets $\Gamma_{k}$ of $\mathbb{R}^{m}.$ The decentralized strategies for individual agents and consistency condition system are represented in an unified manner through a class of mean-field forward-backward stochastic differential equations involving projection operators on $\Gamma_{k}$. The well-posedness of consistency system is established in both the local and global cases by the contraction mapping and discounting method respectively. Related $\varepsilon-$Nash equilibrium property is also verified.\end{abstract}

\begin{keywords} $\varepsilon$-Nash equilibrium, Forward-backward stochastic differential equation, Input constraint, Projection operator, Linear-quadratic mixed mean-field games.
\end{keywords}\\
\footnotetext[1]{{\scriptsize Corresponding author: Tianyang NIE (nietianyang@sdu.edu.cn).}}
\renewcommand{\thefootnote}{\arabic{footnote}}
\footnotetext[1]{{\scriptsize
The work of Ying Hu is partially supported  by Lebesgue center of mathematics ``Investissements d'avenir"
program - ANR-11-LABX-0020-01, by ANR-15-CE05-0024 and by ANR-16-CE40-0015-01; The work of James Jianhui Huang is supported by RGC Grant 500613P,
15300514P, 15327516P; The work of Tianyang Nie is supported by the National Natural Sciences Foundations of China (No. 11601285, 11571205, 61573217), the Natural
Science Foundation of Shandong Province (No. ZR2016AQ13) and the Fundamental Research Funds of Shandong University (No.2015HW023).}}
\renewcommand{\thefootnote}{\fnsymbol{footnote}}
\footnotetext{\textit{{\scriptsize E-mail addresses:}}
{\scriptsize Ying.Hu@univ-rennes1.fr (Ying\ Hu); james.huang@polyu.edu.hk (James Jianhui\ Huang); nietianyang@sdu.edu.cn (Tianyang\ NIE).}}

\textbf{AMS Subject Classification: }60H10, 60H30, 91A10, 91A23, 91A25, 93E20\medskip

\section{Introduction}

Mean-field games (MFGs) for stochastic large-population systems have been well-studied because of their wide applications in various fields, such as economics, engineering, social sciences, and operational research. Large-population systems are distinguished by a large number of agents (or players), such that the individual influence of any single agent on the overall population is negligible, but the effects of their statistical behaviors cannot be ignored at the population scale. Mathematically, all agents are weakly-coupled in their dynamics or cost functionals through the state-average (in a linear state case) or the general empirical measure (in a nonlinear state case), which characterizes the statistical effect generated by the population from a macroscopic perspective. Because of these features, when the number of agents is sufficiently high, complicated coupling features arise and it is unrealistic for a given agent to obtain all other agents' information. Consequently, for an agent to design centralized strategies based on the information of all peers in a large-population system is intractable. Alternatively, one reasonable and practical direction is to transform a high-dimensional and weakly-coupled problem to a low-dimensional and decoupled one, thus the complexity in both analysis and computation can be reduced. For this purpose, one method is to investigate relevant decentralized strategies based only on local information: that is, the relevant strategies are designed only upon the individual state of the given agent, together with some mass-effect quantities, which are computed off-line.\\

In this context, motivated by the theory of the propagation of chaos, Lasry and Lions  \cite{LL-2006,LL-20062,ll} proposed distributed closed-loop strategies for each agent to solve the limiting problem, which were formulated as a coupled nonlinear forward-backward system consisting of a Hamilton-Jacobi-Bellman (HJB) equation and a Fokker-Planck equation. Moreover, the limiting problem enabled the design of approximate Nash equilibrium strategies. Independently,  Caines, Huang and Malham\'{e} \cite{hmc06} developed a similar program called the Nash certainty equivalence (NCE) principle, which was motivated by the analysis of large communications networks. In principle, the MFGs procedure consists of the following four main steps (see \cite{BFY,CD,hcm06,H-C-M 2007,ll}): (1) A limiting mass-effect term is introduced, which comes from the asymptotic mass-effect behavior when the agent number $N$ tends to infinity. This limiting term should be treated as an exogenous and undetermined ``frozen" term at this moment; (2) Through replacing the mass-effect term with the frozen limiting term, related limiting optimization problem can be introduced. Thus, the initial highly-coupled problem can be decoupled and only parameterized by this generic frozen limit. Subsequently, using standard control techniques (see \cite{yz}), an HJB equation can be obtained because of the dynamic programming principle (DPP), or a Hamiltonian system can be obtained because of the stochastic maximum principle (SMP); the obtained equations can characterize the decentralized optimal strategies; (3) A consistency condition is established to guarantee that the set of decentralized optimal strategies collectively replicates the mass-effect; (4) the derived decentralized strategies are shown to be $\varepsilon$-Nash equilibrium, which justifies the aforementioned scheme for finding the approximate Nash equilibrium.\\

For further analysis details of MFGs, readers are referred to \cite{A2015,BFY,CD,G-L-L 2011,hcm06,T-Z-B 2014,W-Z 2012}. That substantial literatures exists studying linear-quadratic-Gaussian (LQG)-MFG is remarkable. For example, \cite{HCM2007} studied LQG MFGs using the common Riccati equation approach; \cite{BSYY} adopted SMP with the help of adjoint equations; \cite{B2012} and \cite{L-Z 2008} studied
ergodic LQG MFGs; \cite{H-C-M 2007} studied LQG MFGs with nonuniform agents through state-aggregation by empirical distribution. For further research, readers are referred to \cite{BP2014, BSY2013,P2014} and the references therein.\\

All aforementioned mentioned works examine standard MFGs, which (except \cite{hmc06}) require that all the agents be statistically identical and that the individual influence on the overall population of a single agent be negligible as the number of the agents tends to infinity. However, in the real world, some models exist in which a major agent has a significant influence on other agents (called minor agents) no matter how numerous the minor-agents may be. Such interactions exists in numerous socio-economic problems (e.g., \cite{KMC2012,Y2002}). This type of games involving agents with different power levels is usually called a mixed type games. Compared with MFGs with only minor agents, a distinctive feature of mixed type MFGs is that the mean field behavior of the minor agents is affected by the major agent; thus, it is a random process, and the influence of the major agent on the minor agents is not negligible in the limiting problem. To deal with such new features, conditional distribution with respect to the major agent's information flow is introduced (see \cite{NC-2013,CZ2016}).\\

We now discuss some works on MFGs with a major agent and minor agents that are related to our paper. To the best of our knowledge, the first LQG MFG with a major agent and minor agents was studied by \cite{H 2010}, in which the minor agents were from a total $K$ classes. In the study of \cite{N-H 2012}, the authors examined mean-field LQG mixed games with continuum-parameterized minor agents.  \cite{NC-2013} investigated nonlinear stochastic dynamic systems with major and minor agents. \cite{BLP2014} studied nonlinear stochastic differential games, involving a major agent and a large number of collectively acting minor agents, as two-person zero-sum stochastic differential games of the type feedback control against feedback control, the limiting behaviors of the saddle point controls were also studied. For further research, readers are referred to \cite{BCY2016,CZ2016} and the references therein. \\

In this study, we investigated a class of LQG MFGs with major agents and minor agents in the presence of control constraints.  In all of the aforementioned papers about linear quadratic (LQ) control problems, the control was unconstrained, and the (feedback) control was constructed from DPP or SMP, which is automatically admissible. Whereas, if we impose constraints on the admissible control, the whole LQ approach fails to apply, (see e.g., \cite{CZ2004,HZ2005}). We emphasizes that the LQ control problems with control constraints have wide applications in finance and economics. For example, the mean-variance  problem with prohibiting short-selling can be transferred to LQ control problems with positive control, (see e.g., \cite{BWYY2014,HZ2005}). The optimal investment problems where the agents have relative performance, (i.e., their portfolio constraints take different forms), can also be tackled through approach of LQ control problems with input constraint, (see e.g, \cite{HIM2005,ET2015}). Remark \ref{remark for constraint} of the current paper provides several other constraint sets $\Gamma\subset\mathbb{R}^m$ as well as their applications. For an investigation of LQ problems with positive controls or a more general study where the control is constrained in a given convex cone, readers are referred to \cite{B1972} for a deterministic case and \cite{CZ2004,HZ2005,LZL2002} for a stochastic case.\\

To the best of our knowledge, the current study is the first to examine constrained LQG MFGs with major agents and large numbers of minor agents. In addition to the control constraint being fully new, our study also has other novel features when compared with other relevant studies: in \cite{H 2010,N-H 2012}, the diffusion term simply takes a constant; however, this study considers the mean-field LQG mixed games in which the diffusion term depends on the major agent's and the minor agent's states, as well as the individual control strategy. This creates additional difficulties, especially when applying the general SMP; in \cite{NC-2013,BLP2014}, a nonlinear stochastic differential games was studied; however, in this study, we put ourselves in a LQ mean-field framework with individual controls constrained in a closed convex set; thus, we can explicitly present the optimal strategies through a projection operator. Moreover, we use SMP to obtain the optimal strategies through Hamiltonian systems that are fully coupled forward-backward stochastic differential equations (FBSDEs); this is different from \cite{NC-2013}, which used DPP and a verification theorem to characterize the optimal strategies. Here, we connect the consistency condition to  a new type of conditional mean field forward-backward stochastic differential equations (MF-FBSDEs) involving projection operators. We establish its well-posedness under suitable conditions using a fixed point theorem, both in local cases and global cases. Unlike our previous paper \cite{HHL2016}, we now focus on the mixed game, which is more realistic and more difficult. In this situation, the consistency condition is a conditional MF-FBSDE that does not satisfy the usual monotonicity condition of \cite{HP1995}. Moreover, we require an additional subtle analysis to analyze the major agent's influence and establish the approximate Nash equilibrium. Finally, motivated by \cite{ET2015}, we believe that our results can be applied to solve the optimal investment problems caused by major agent and $N$ minor agents. \\

The reminder of this paper is structured as follows: In Section 2, we formulate the LQG MFGs with a control constraint involving a major agent and minor agents. Decentralized strategies are derived through a FBSDE with projection operators. A consistency condition is also established using some fully coupled FBSDEs that come from the SMP. In Section 3, we prove the well-posedness of fully coupled conditional MF-FBSDEs, which characterize the consistency condition in the local time horizon case. In Section 4, we ascertain the wellposedness of the global time case. In Section 5, we verify the $\varepsilon-$Nash equilibrium of the decentralized strategies. \\

The main contributions of this paper can be summarized as follows:

\begin{itemize}
\item To introduce and analyze a new class of LQ mixed MFGs using SMP. In our setting, both the major agent and minor agents are constrained in their control inputs. \vspace{-0.2cm}
\item The diffusion terms of major and minor agents are dependent on their states and control variables.\vspace{-0.2cm}
\item The consistency condition system or NCE is represented through a new type of conditional mean-filed type FBSDE with projection operators.\vspace{-0.2cm}
\item The existence and uniqueness of such an NCE system are established in a local case (i.e., small time horizon) using the contraction mapping method, and in a global case (i.e., arbitrary time horizon) using the discounting method.
\end{itemize}
\bigskip

\section{Notations and terminology}
Consider a finite time horizon $[0,T]$ for fixed $T>0$. We assume $(\Omega,
\mathcal F, \{\mathcal{F}_t\}_{0 \leq t \leq T}, \mathbb{P})$ is a complete, filtered probability space satisfying usual conditions and $\{W_i(t),\ 0\le i\leq N\}_{0 \leq t \leq T}$ is a $(N+1)$-dimensional Brownian motion on this space. Let $\mathcal{F}_t$ be the natural filtration generated by  $\{W_i(s), 0\leq i\leq N, 0\leq s\leq t\}$ and augmented by $\mathcal{N}_{\mathbb{P}}$ (the class of $\mathbb{P}$-null sets of $\mathcal{F}$). Let $\mathcal F^{W_0}_t$, $\mathcal F^{W_i}_t$, $\mathcal F^{i}_t$ be respectively the augmentation of $\sigma\{W_0(s), 0\leq s\leq t\}$,  $\sigma\{W_i(s), 0\leq s\leq t\}$, $\sigma\{W_0(s),W_i(s), 0\leq s\leq t\}$ by $\mathcal{N}_{\mathbb{P}}$. Here, $\{\mathcal F^{W_0}_t\}_{0\leq t\leq T}$ stands for the information of the major agent, while $\{\mathcal F^{W_i}_t\}_{0\leq t\leq T}$ represents the individual information of $i$-th minor agent.

Throughout the paper, $x^{\prime}$ denotes the transpose of a vector or a matrix $x$, $\mathcal{S}^n$ denotes the set of symmetric $n\times n$ matrices with real elements. For a matrix $M\in\mathbb{R}^{n\times d}$, we define the norm $|M|:=\sqrt {Tr(M^{\prime}M)}$. If $M\in \mathcal{S}^n$ is positive (semi) definite, we write $M>(\geq) 0$. Let $\mathcal{H}$ be a given Hilbert space, the set of $\mathcal{H}$-valued continuous functions is denoted by $C(0,T;\mathcal{H})$. If $N(\cdot)\in C(0,T;\mathcal{S}^n)$ and $N(t)>(\geq) 0$ for every $t\in[0,T]$, we say that $N(\cdot)$ is positive (semi) definite, which is denoted by $N(\cdot)>(\geq) 0$.  Now, for a given Hilbert space $\mathcal{H}$ and a filtration $\{\mathcal{G}_t\}_{0\leq t\leq T}$, we also introduce the following spaces which will be used in this paper:
\begin{flalign*}
&L^2_{\mathcal G_T}(\Omega;\mathcal{H}):=\left\{\xi:\Omega\rightarrow\mathcal{H}\ |\  \xi \text{ is } \mathcal{G}_T\text{-measurable such that } \mathbb{E}|\xi|^2<\infty\right\},&\medskip\\
&L^2_{\mathcal G}(0,T;\mathcal{H}):=\Big\{x(\cdot): [0,T]\times\Omega\rightarrow\mathcal{H}\ |\ x(\cdot)\text{ is } \mathcal{G}_t\text{-progressively measurable process such that } &\medskip\\
&\qquad\qquad\qquad\qquad\qquad\qquad\qquad\qquad\qquad\qquad\qquad\qquad\mathbb{E}\int_0^T|x(t)|^2dt<\infty\Big\},&\medskip\\
&L^2_{\mathcal G}(\Omega;C(0,T;\mathcal{H})):=\Big\{x(\cdot): [0,T]\times\Omega\rightarrow\mathcal{H}\ |\ x(\cdot)\text{ is } \mathcal{G}\text{-adapted continuous process such that } &\medskip\\
&\qquad\qquad\qquad\qquad\qquad\qquad\qquad\qquad\qquad\qquad\qquad\qquad\mathbb{E}\left[\sup\limits_{0\leq t\leq T}|x(t)|^2\right]<\infty\Big\}.&
\end{flalign*}
\section{LQG mixed games with control constraint}

We consider a linear-quadratic-Gaussian mixed mean-field game involving a major agent $\mathcal{A}_0$ and a heterogeneous large-population with $N$ individual minor agents $\{\mathcal{A}_{i}: 1 \leq i \leq N\}$. Unlike other works of LQG mixed games, our control domain is constrained in a closed convex set of the full Euclidean space, and more details of constraints will be given later. The states  $x_0$ for major agent $\mathcal{A}_{0}$ and $x_i$ for each minor agent $\mathcal{A}_{i}$ are modeled by the following controlled linear stochastic differential equations (SDEs) with mean-field coupling:
\begin{equation}\label{major player state}\begin{aligned}
dx_0(t)&=[A_{0}(t)x_0(t)+B_{0}(t)u_0(t)+F_0^{1}(t) x^{(N)}(t)+b_{0}(t)]dt\\&+[C_{0}(t)x_0(t)+D_{0}(t)u_0(t)
+F_0^{2}(t)x^{(N)}(t)+\sigma_{0}(t)]dW_0(t), \quad x_0(0)=x_0\in \mathbb{R}^{n},
\end{aligned}\end{equation}
and
\begin{equation}\label{minor player state}\begin{aligned}
dx_i(t)&=[A_{\theta_i}(t)x_i(t)+B(t)u_i(t)+F_1(t) x^{(N)}(t)+b(t)]dt\\&+[C(t)x_i(t)+D_{\theta_i}(t)u_i(t)
+F_2(t)x^{(N)}(t)+Hx_0(t)+\sigma(t)]dW_i(t), \quad x_i(0)=x \in \mathbb{R}^{n},
\end{aligned}\end{equation}
where $x^{(N)}(\cdot)=\frac{1}{N}\sum_{i=1}^{N}x_{i}(\cdot)$ is the state-average of minor agents. Recalling that $\mathcal F^{i}_t$ is the individual decentralized information while $\mathcal{F}_t$ is the centralized information driven by all Brownian motion components. We point out that the heterogeneous noise $W_i$ is specific for individual agent $\mathcal{A}_{i}$ whereas $x_i(t)$ is adapted to $\mathcal{F}_t$ instead of $\mathcal{F}^{i}_t$ due to the coupling state-average $x^{(N)}.$ The coefficients of (\ref{major player state}) and (\ref{minor player state}) are deterministic matrix-valued functions with appropriate dimensions. The number $\theta_i$ is a parameter of agent $\mathcal{A}_i$ to model a heterogeneous population of minor agents, for more explanations, see \cite{H 2010}. For sake of notations, in (\ref{minor player state}), we only set the coefficients $A(\cdot)$ and $D(\cdot)$ (see also $R(\cdot)$ in (\ref{minor agent cost functional})) to be dependent on  $\theta_i$. Similar analysis can be proceeded in case that all other coefficients depend also on $\theta_i$. In this paper, we assume that $\theta_i$ takes values from a finite set $\Theta:=\{1,2,\ldots,K\}$ which means that totally $K$ types of minor agents are considered. We call $\mathcal{A}_i$ a $k$-type minor agent if $\theta_i=k \in \Theta$.

In this paper, we are interested in the asymptotic behavior as $N$ tends to infinity which is essentially to consider a family of games with an increasing number of minor agents. To describe the related large population system, let us first define
\[
\mathcal{I}_k=\{i\ |\ \theta_i=k, 1 \leq i \leq  N\}, \quad \quad N_k=|\mathcal{I}_k|,
\]
where $N_k$ is the cardinality of index set $\mathcal{I}_k, 1 \leq k \leq K.$ Let $\pi_k^{(N)}=\frac{N_k}{N}$ for $k \in \{1, \cdots, K\}$ then $\pi^{(N)}=(\pi_1^{(N)}, \cdots, \pi_K^{(N)})$ is a probability vector to represent the empirical distribution of $\theta_1, \cdots, \theta_N.$ The following assumption gives some statistical properties for $\theta_i.$ For more details, the reader is referred to \cite{H 2010}:
\begin{description}
  \item[(A1)] There exists a probability mass vector $\pi=(\pi_1,\pi_2, \cdots, \pi_K)$ such that $\lim_{N\rightarrow+\infty}\pi^{(N)}=\pi$ and $\min_{1 \leq k \leq K}\pi_{k}>0.$
  \end{description}
From $\mathbf{(A1)}$ we know that when $N\rightarrow+\infty,$ the proportion of $k$-type agents becomes stable for each $k$ and that the number of each type agents tends to infinity. Otherwise, the agents in given type with bounded size should be excluded from consideration when analyzing asymptotic behavior as $N \rightarrow+\infty.$ Throughout the paper we make the convention that $N$ is suitable large such that $\min_{1 \leq k \leq K}N_{k}\ge1$.

Now let us specify the admissible control set and cost functionals of our linear-quadratic-Gaussian mixed game with input control constraint. We call $u_0$ an admissible control for the major agent if $u_0\in \mathcal{U}^{0}_{ad}$, where $\mathcal{U}^{0}_{ad}:=\{u(\cdot)\ | \ u(\cdot)\in L^{2}_{\mathcal{F}}(0, T; \Gamma_{0})\}.$
Here $\Gamma_{0} \subset \mathbb{R}^{m}$ is a nonempty closed convex set. Moreover, for each $1\leq i\leq N$, we also define decentralized admissible control $u_i$ for the minor agent $\mathcal{A}_i$ as $u_i\in \mathcal{U}_{ad}^{i}$, where for a nonempty closed convex set $\Gamma_{\theta_i}\subset\mathbb{R}^m$,
$
\mathcal{U}_{ad}^{i}:=\{u_i(\cdot)\ |\ u_i(\cdot)\in L^{2}_{\mathcal{F}^{i}}(0, T; \Gamma_{\theta_i}) \}.
$
\begin{remark}\label{remark for constraint}
We give the following typical examples for the closed convex constraint set $\Gamma$: $\Gamma^{1}=\mathbb{R}^{m}_{+}$ represents that the control can only take positive values, it connects with the mean-variance portfolio selection problem with no-shorting, see \cite{BWYY2014,HZ2005}. The linear subspace $\Gamma^{2}=(\mathbb{R}e_i)^{\bot}$ (where $(e_1,e_2,\ldots,e_m)$ is the canonical basis of $\mathbb{R}^m$) represents that the control can only take from a hyperplane, it is used to deal with that in the investment theory, each manager has access to the whole market except some fixed firm who has private information. For more examples of linear constraints and their economic meaning, the reader is referred to \cite{ET2015}. $\Gamma$ can also be some closed cone (i.e. $\Gamma$ is closed and if $u\in\Gamma$, then $\alpha u\in\Gamma$, for all $\alpha\ge0$), e.g. $\Gamma^{3}=\{u\in\mathbb{R}^m: \Upsilon u=0\}$ or $\Gamma^{4}=\{u\in\mathbb{R}^m: \Upsilon u\leq 0\}$, where $\Upsilon \in \mathbb{R}^{n\times m}$.  
For investigations on the stochastic LQ problems with conic control constraint, the readers are refereed to \cite{CZ2004,HZ2005}.
\end{remark}

Let $u=(u_0,u_1, \cdots, u_{N})$  be the set of strategies of all $N+1$ agents, $u_{-0}=(u_1,u_2,\ldots,u_N)$ be the control strategies except $\mathcal{A}_0$ and
$u_{-i}=(u_{0},u_1, \cdots, u_{i-1}, u_{i+1}, \cdots, u_{N})$
be the set of strategies except the $i$-th agent $\mathcal{A}_i.$ We introduce the cost functional of the major agent as
\begin{equation}\label{major agent cost functional}
\begin{aligned}
    \mathcal {J}_0(u_0,u_{-0})&=\frac{1}{2}\mathbb{E}\bigg [\int_0^T \Big <Q_{0}(t)\big(x_0(t)-\rho_0 x^{(N)}(t)\big),x_0(t)-\rho_0 x^{(N)}(t)\Big>\\
    &+\big<R_{0}(t)u_0(t), u_0(t)\big>dt+\Big<G_{0}\big(x_0(T)-\rho_0 x^{(N)}(T)\big),x_0(T)-\rho_0 x^{(N)}(T)\Big>\bigg]
\end{aligned}
\end{equation}
and the cost functional of the minor agent $\mathcal{A}_i$ as
\begin{equation}\label{minor agent cost functional}
\begin{aligned}
    \mathcal {J}_i(u_i,u_{-i})=&\frac{1}{2}\mathbb{E}\bigg [\int_0^T \Bigg(\Big <Q\big(x_i\!-\!\rho x^{(N)}\!-\!(1\!-\!\rho)x_0(t)\big),x_i\!-\!\rho x^{(N)}\!-\!(1\!-\!\rho)x_0(t)\Big>\!+\!\big<R_{\theta_i}u_i,u_i\big>\Bigg)dt\\
    &+\Big<G\big(x_i(T)\!-\!\rho x^{(N)}(T)\!-\!(1\!-\!\rho)x_0(T)\big),x_i(T)\!-\!\rho x^{(N)}(T)\!-\!(1\!-\!\rho)x_0(T)\Big>\bigg].
\end{aligned}
\end{equation}
We mention that for notational brevity, the time argument is suppressed in above equation as
well as in the sequel when necessary.

We impose the following assumptions:
\begin{description}
  \item[(A2)] The coefficients of the states satisfy that, for $1\leq i\leq N$,
  \[
  \begin{aligned}
  &A_0(\cdot), A_{\theta_i}(\cdot), C_0(\cdot), C(\cdot), F_0^1(\cdot), F_1(\cdot), F_0^2(\cdot), F_2(\cdot), H(\cdot)\in L^\infty(0,T;\mathbb{R}^{n \times n}),\medskip\\
  &B_0(\cdot),B(\cdot),D_0(\cdot), D_{\theta_i}(\cdot) \in L^\infty(0,T;\mathbb{R}^{n \times m}), b_0(\cdot), b(\cdot), \sigma_0(\cdot), \sigma(\cdot) \in L^\infty(0,T;\mathbb{R}^n).
  \end{aligned}
  \]
  \item[(A3)] The coefficients of cost functionals satisfy that, for $1\leq i\leq N$,
   \[
     \begin{aligned}
   &Q_0(\cdot), Q(\cdot)\in L^\infty(0,T;\mathcal{S}^n), R_0(\cdot), R_{\theta_i}(\cdot)\in L^\infty(0,T;\mathcal{S}^m),G_0,G\in \mathcal{S}^n,\medskip\\
   & Q_0(\cdot)\geq0, Q(\cdot)\geq0, R_0(\cdot)>0, R_{\theta_i}(\cdot)>0,G_0\geq0, G\geq0, \rho_0,\rho\in [0,1].
     \end{aligned}
   \]
\end{description}
Here $L^\infty(0,T;\mathcal{H})$ denotes the space of uniformly bounded functions mapping from $[0,T]$ to $\mathcal{H}$.
It follows that, under assumptions (\textbf{A2}) and  (\textbf{A3}), the system (\ref{major player state}) and  (\ref{minor player state}) admits a unique solution $x_0, x_i(\cdot) \in L^{2}_{\mathcal{F}}(\Omega;C(0,T;\mathbb{R}^n))$ for given admissible control $u_0$ and $u_i$. Now, let us formulate the LQG mixed games with control constraint.\\

\textbf{Problem (CC).}
Find a strategies set $\bar{u}=(\bar{u}_0,\bar{u}_1,\cdots,\bar{u}_N)$ where $\bar u_i(\cdot)\in \mathcal{U}^{i}_{ad},\ 0 \leq i\leq N$,  such that
\[
\mathcal{J}_i(\bar{u}_i(\cdot),\bar{u}_{-i}(\cdot))=\inf_{u_i(\cdot)\in \mathcal{U}_{ad}^{i}}\mathcal{J}_i(u_i(\cdot),\bar{u}_{-i}(\cdot)), \quad 0\leq i\leq N.
\]
We call $\bar{u}$ an optimal strategies set for Problem \textbf{(CC).}
\medskip

For comparison, we present also the definition of $\varepsilon$-Nash equilibrium.
\begin{definition}\label{epsilon-Nash equilibrium def}
A set of strategies $\bar u_i(\cdot)\in \mathcal{U}^{i}_{ad},\ 0 \leq i\leq N,$ is called  an $\varepsilon$-Nash equilibrium with respect to costs  $\mathcal J^i,\ 0\leq i\leq N,$ if there exists a $\varepsilon=\varepsilon(N)\geq0, \displaystyle\lim_{N \rightarrow +\infty}\varepsilon(N)=0,$
such that for any $0 \leq i\leq N$, we have
\[\label{epsilon-Nash equilibrium}
\mathcal J_i(\bar u_i(\cdot),\bar u_{-i}(\cdot))\leq \mathcal J_i(u_i(\cdot),\bar{u}_{-i}(\cdot))+\varepsilon,
\]
when any alternative strategy $u_i\in \mathcal{U}^{i}_{ad}$ is applied by $\mathcal{A}_i$.
\end{definition}
\begin{remark}
If $\varepsilon=0,$ Definition \ref{epsilon-Nash equilibrium def} reduces to the usual exact Nash equilibrium.
\end{remark}

\subsection{Stochastic optimal control problem of the major agent}

As explained in introduction, the centralized optimization strategies to Problem (\textbf{CC}), are rather complicate and infeasible to be applied when the number of the agents tends to infinity. Alternatively, we investigate the decentralized strategies via the limiting problem with the help of frozen limiting state-average. To this end, we first figure out the representation of limiting process using heuristic arguments. Based on it, we can find the decentralized strategies by consistency condition, and then rigorously verify the derived strategies satisfy the approximate Nash equilibrium. We formalize the auxiliary limiting mixed game via the approximation of the average state $x^{(N)}$. Since $\pi_k^{(N)}\approx\pi_k$ for large $N$ and
\[
x^{(N)}=\frac{1}{N}\sum_{k=1}^{K}\sum_{i\in\mathcal{I}_k}x_i=\sum_{k=1}^{K}\pi_k^{(N)}\frac{1}{N_k}\sum_{i\in\mathcal{I}_k}x_i,
\]
we may approximate $x^{(N)}$ by $\sum_{k=1}^{K}\pi_km_k$, where $m_k\in\mathbb{R}^n$ is used to approximate $\frac{1}{N_k}\sum_{i\in\mathcal{I}_k}x_i$. Denote $m=(m_1^\prime,m_2^\prime,\ldots m_k^\prime)^{\prime}$, which is called the set of aggregate quantities. Replacing $x^{(N)}$ of (\ref{major player state}) and (\ref{major agent cost functional}) by $\sum_{k=1}^{K}\pi_km_k$, the major agent's dynamics is given by
\begin{equation}\label{major agent state limit}
\begin{aligned}
dz_0(t)&=[A_0(t)z_0(t)+B_0(t)u_0(t)+F_0^1(t) \sum_{k=1}^{K}\pi_km_k(t)+b_0(t)]dt\medskip\\
&+[C_0(t)z_0(t)+D_0(t)u_0(t)
+F_0^2(t)\sum_{k=1}^{K}\pi_km_k(t)+\sigma_0(t)]dW_0(t), \quad z_0(0)=x_0 \in \mathbb{R}^{n},
\end{aligned}
\end{equation}
and the limiting cost functional is
\begin{equation}\label{major agent cost functional limit}
\begin{aligned}
    &{J}_0(u_0)=\frac{1}{2}\mathbb{E}\bigg [ \int_0^T \Big <Q_0(t)\big(z_0(t)-\rho_0\sum_{k=1}^{K}\pi_km_k(t)\big),z_0(t)-\rho_0\sum_{k=1}^{K}\pi_km_k(t)\Big>\medskip\\
    &\quad+\big<R_0(t)u_0(t),u_0(t)\big>dt
    +\Big<G_0\big(z_0(T)-\rho_0\sum_{k=1}^{K}\pi_km_k(T)\big),z_0(T)-\rho_0\sum_{k=1}^{K}\pi_km_k(T)\Big>\bigg].
\end{aligned}
\end{equation}
For simplicity, let $\otimes$ be the Kronecker product of two matrix (see \cite{Graham 1981}) and we denote $F_0^{1,\pi}:=\pi\otimes F_0^1$, $F_0^{2,\pi}:=\pi\otimes F_0^2$, $\rho_0^{\pi}:=\pi\otimes \rho_0I_{n\times n}$. Then (\ref{major agent state limit}) and (\ref{major agent cost functional limit}) become respectively to
\begin{equation}\label{major agent state limit 1}
\begin{aligned}
dz_0(t)&=[A_0(t)z_0(t)+B_0(t)u_0(t)+F_0^{1,\pi}(t)m(t)+b_0(t)]dt\medskip\\
&+[C_0(t)z_0(t)+D_0(t)u_0(t)+F_0^{2,\pi}(t)m(t)+\sigma_0(t)]dW_0(t), \quad z_0(0)=x_0 \in \mathbb{R}^{n},
\end{aligned}
\end{equation}
and
\[\label{major agent cost functional limit 1}
\begin{aligned}
    {J}_0(u_0)&=\frac{1}{2}\mathbb{E}\bigg [ \int_0^T \Big <Q_0(t)\big(z_0(t)-\rho_0^\pi m(t)\big),z_0(t)-\rho_0^\pi m(t)\Big>\medskip\\
    &+\big<R_0(t)u_0(t),u_0(t)\big>dt+\Big<G_0\big(z_0(T)-\rho_0^\pi m(T)\big),z_0(T)-\rho_0^\pi m(T)\Big>\bigg].
\end{aligned}
\]

We define the following auxiliary stochastic optimal control problem for major agent with infinite population:
\medskip

\textbf{Problem (LCC-Major).} For major agent $\mathcal{A}_0,$ find $u^\ast_0(\cdot)\in \mathcal{U}_{ad}^{0}$ satisfying
\[J_0(u^\ast_0(\cdot))=\inf_{u_0(\cdot)\in \mathcal{U}_{ad}^{0}}J_0(u_0(\cdot)).
\]
Then $u^\ast_0(\cdot)$ is called a decentralized optimal control for Problem (\textbf{LCC-Major}).
\medskip

Now, similar to \cite{HHL2016}, we would like to apply SMP to above limiting LQG problem (\textbf{LCC-Major}) with input constraint. We introduce the following first order adjoint equation:
\begin{equation}\label{first order adjoint equation}
    \left\{
    \begin{aligned}
      dp_0(t)&=-\big[A^{\prime}_0(t)p_0(t)-Q_0(t)(z_0(t)-\rho_0^\pi m(t))+C^{\prime}_0(t)q_0(t)\big]dt+q_0(t)dW_0(t),\\
      p_0(T)&=-G_0\big(z_0(T)-\rho_0^\pi m(T)\big),
    \end{aligned}
    \right.
\end{equation}
as well as the Hamiltonian function
\[\label{Hamiltonian function}
\begin{aligned}
    H_0(t,p,q,x,u)=&\big<p,A_0x+B_0u+F_0^{1,\pi}m+b_0\big>+\big<q,C_0x+D_0u+F_0^{2,\pi}m+\sigma_0\big>\medskip\\
    &-\frac{1}{2}\big<Q_0(x-\rho_0^{\pi}m),x-\rho_0^{\pi}m\big>-\frac{1}{2}\big<R_0u,u\big>.
\end{aligned}
\]
Since $\Gamma_0$ is a closed convex set, for optimal control $u_0^\ast$, related optimal state $z_0^\ast$ and related solution $(p_0^\ast,q_0^\ast)$ to (\ref{first order adjoint equation}),  the SMP reads as
the following local form
\begin{equation}\label{convex maximum principle}
  \left \langle \frac{\partial H_0}{\partial u}(t,p_0^\ast,q_0^\ast,z_0^\ast,u_0^\ast),u-u_0^\ast\right\rangle\leq 0, \quad \text{ for all } u\in\Gamma_0, \text{ a.e. } t\in[0,T],\ \mathbb{P}-a.s.
\end{equation}
Similar to the argument in page 5 of \cite{HHL2016}, using the well-known results of convex analysis (see Theorem 5.2 of \cite{Brezis 2010} or Theorem 4.1 of \cite{HHL2016}), we obtain that (\ref{convex maximum principle}) is equivalent to
\begin{equation}\label{optimal control}
   u_0^\ast(t)=\mathbf{P}_{\Gamma_0}[R^{-1}_0(t)(B_0^{\prime}(t)p_0^\ast(t)+D_0^{\prime}(t)q_0^\ast)(t)], \quad \text{ a.e. } t\in[0,T], \mathbb{P}-a.s.
\end{equation}
where $\mathbf{P}_{\Gamma_0}[\cdot]$ is the projection mapping from $\mathbb{R}^m$ to its closed convex subset $\Gamma_0$ under the norm $\|\cdot\|_{R_0}$ (where $
 \|x\|^2_{R_0}=\left\langle \left\langle x,x\right\rangle\right\rangle:=\big< R_0^{\frac{1}{2}}x,R_0^{\frac{1}{2}}x\big>).$
Finally, by substituting (\ref{optimal control}) in (\ref{major agent state limit 1}) and (\ref{first order adjoint equation}), we get the following Hamiltonian system for the major agent:
\begin{equation}\label{major Hamiltonian system}
\left\{
    \begin{aligned}
      dz_0=&\bigg( A_0z_0+B_0\mathbf{P}_{\Gamma_0}\big[R_0^{-1}\big(B_0^{\prime}p_0+D_0^{\prime}q_0\big)\big]
      +F_0^{1,\pi}m+b_{0}\bigg)dt\medskip\\
      &+\bigg(C_0z_0+D_0\mathbf{P}_{\Gamma_0}\big[R_0^{-1}\big(B_0^{\prime}p_0+D_0^{\prime}q_0\big)\big]
      +F_0^{2,\pi}m+\sigma_0\bigg)dW_0(t),\medskip\\
      dp_0&=-\big(A^{\prime}_0p_0-Q_0(z_0-\rho_0^{\pi}m)+C_0^{\prime}q_0\big)dt+q_0dW_0(t),\medskip\\
      z_0(0)&=x_0,\quad \quad p_0(T)=-G_0\big(z_0(T)-\rho_0^{\pi}m(T)\big).
    \end{aligned}
    \right.
\end{equation}

\subsection{Stochastic optimal control problem for minor agent}

By denoting $F^{1,\pi}:=\pi\otimes F_1$, $F^{2,\pi}:=\pi\otimes F_2$, $\rho^{\pi}:=\pi\otimes \rho I_{n\times n}$, the limiting state of minor agent $\mathcal{A}_i$ is
\[
\left\{
    \begin{aligned}
      dz_i=&\bigg( A_{\theta_i}z_i+Bu_i+F^{1,\pi}m+b\bigg)dt+\bigg(Cz_i+D_{\theta_i}u_i+F^{2,\pi}m+Hz_0+\sigma\bigg)dW_i(t),\medskip\\
     z_i(0)=&x.
    \end{aligned}
    \right.
\]
The limiting cost functional is given by:
\begin{equation}\label{minor agent cost functional limit}
\begin{aligned}
    &{J}_i(u_i)=\frac{1}{2}\mathbb{E}\bigg [\int_0^T \Bigg(\Big <Q\big(z_i\!-\!\rho\sum_{k=1}^{K}\pi_km_k\!-\!(1\!-\!\rho)z_0\big),
    z_i\!-\!\rho\sum_{k=1}^{K}\pi_km_k\!-\!(1\!-\!\rho)z_0\Big>
    \!+\!\big<R_{\theta_i}u_i,u_i\big>\Bigg)dt\\
    &+\Big<G\big(z_i(T)-\rho\sum_{k=1}^{K}\pi_km_k(T)-(1-\rho)z_0(T)\big),
    z_i(T)-\rho\sum_{k=1}^{K}\pi_km_k(T)-(1-\rho)z_0(T)\Big>\bigg],
\end{aligned}
\end{equation}
or equivalently by
\[\label{minor agent cost functional limit 1}
\begin{aligned}
    {J}_i(u_i)=&\frac{1}{2}\mathbb{E}\bigg [ \int_0^T \Bigg(\Big <Q\big(z_i-\rho^\pi m-\!(1\!-\!\rho)z_0\big),z_i-\rho^\pi m-\!(1\!-\!\rho)z_0\Big>+\big<R_{\theta_i}u_i,u_i\big>\Bigg)dt\medskip\\
    &+\Big<G\big(z_i(T)-\rho^\pi m(T)-\!(1\!-\!\rho)z_0(T)\big),z_i(T)-\rho^\pi m(T)-\!(1\!-\!\rho)z_0(T)\Big>\bigg],
\end{aligned}
\]
and the related limiting stochastic optimal control problem for the minor agents is:

\textbf{Problem (LCC-Minor).} For each minor agent $\mathcal{A}_i,$ find $u^\ast_i(\cdot)\in \mathcal{U}_{ad}^{i}$ satisfying
\[J_i(u^\ast_i(\cdot))=\inf_{u_i(\cdot)\in \mathcal{U}_{ad}^{i}}J_i(u_i(\cdot)).
\]
Then $u^{\ast}_i(\cdot)$ is called a decentralized optimal control for Problem (\textbf{LCC-Minor}).
\medskip

Similar to the major agent, we obtain the following Hamiltonian system for minor agent $\mathcal{A}_i$,
\begin{equation}\label{minor Hamiltonian system}
\left\{
    \begin{aligned}
      dz_i=&\bigg( A_{\theta_i}z_i+B\mathbf{P}_{\Gamma_{\theta_i}}\big[R_{\theta_i}^{-1}\big(B^{\prime}p_i
      +D_{\theta_i}^{\prime}q_i\big)\big]+F_1^{\pi}m+b\bigg)dt\medskip\\
      &+\bigg(Cz_i+D_{\theta_i}\mathbf{P}_{\Gamma_{\theta_i}}\big[R_{\theta_i}^{-1}\big(B^{\prime}p_i
      +D_{\theta_i}^{\prime}q_i\big)\big]+F_2^{\pi}m+Hz_0+\sigma\bigg)dW_i(t),\medskip\\
      dp_i=&-\big(A_{\theta_i}^{\prime}p_i-Q(z_i-\rho^{\pi}m-(1-\rho)z_0)
      +C^{\prime}q_i\big)dt+q_idW_i(t)+q_{i,0}dW_0(t),\medskip\\
     z_i(0)=&x,\quad \quad p_i(T)=-G\big(z_i(T)-\rho^{\pi}m(T)-(1-\rho)z_0(T)\big).
    \end{aligned}
    \right.
\end{equation}
Here, $\mathbf{P}_{\Gamma_{\theta_i}}[\cdot]$ is the  projection mapping from $\mathbb{R}^m$ to its closed convex subset $\Gamma_{\theta_i}$ under the norm $\|\cdot\|_{R_{\theta_i}}$. We mention that the limiting minor agent's state $z_i$ depends also on the limiting major agent's state $z_0$, it makes that $z_i$ is $\mathcal{F}^i$-adapted, thus $q_{i,0}dW_0(t)$ appears in the adjoint equation.

\subsection{Consistency condition system for mixed game}

Let us first focus on the $k$-type minor agent. When $i\in\mathcal{I}_k=\left\{i\ |\ \theta_i=k\right\}$, we denote $A_{\theta_i}=A_k$, $D_{\theta_i}=D_k$, $R_{\theta_i}=R_k$ and $\Gamma_{\theta_i}=\Gamma_k$.
We would like to approximate $x_i$ by $z_i$ when  $N\rightarrow+\infty$, thus $m_k$ should satisfy the consistency condition (noticing that Assumption \textbf{(A1)} implies that $N_k\rightarrow\infty$ if $N\rightarrow\infty$)
\[
 m_k(\cdot)=\lim_{N\rightarrow+\infty}\frac{1}{N_k}\sum_{i\in\mathcal{I}_k}z_i(\cdot).
\]
Recall that for $i,j\in\mathcal{I}_k$, $z_i$ and $z_j$ are identically distributed, and conditional independent (under $\mathbb{E}(\cdot\ |\mathcal{F}_{\cdot}^{W_0})$). Thus by conditional strong law of large number, we have (the convergence is in the sense of almost surely, see e.g. \cite{MNZ-2005})
\begin{equation}\label{limit average state}
    m_k(\cdot)=\lim_{N\rightarrow+\infty}\frac{1}{N_k}\sum_{i\in\mathcal{I}_k}z_i(\cdot)
    =\mathbb{E}(z_i(\cdot)|\mathcal{F}_{\cdot}^{W_0}),
\end{equation}where $z_i$ is given by (\ref{minor Hamiltonian system}) with $A_{\theta_i}=A_k, D_{\theta_i}=D_k, R_{\theta_i}=R_k, \Gamma_{\theta_i}=\Gamma_k.$
By combining (\ref{major Hamiltonian system}), (\ref{minor Hamiltonian system}) and (\ref{limit average state}), we get the following consistency condition system or Nash certainty equivalence principle of $k$-type minor agent, for $1\leq k\leq K$: (As mentioned before, for notational brevity, the time argument is suppressed in following equations except $\mathbb{E}(\alpha_k(t)|\mathcal{F}_t^{W_0})$  to emphasise its dependence on conditional expectation under $\mathcal{F}_t^{W_0}$)
\begin{equation}\label{minor cc}
\left\{
    \begin{aligned}
      &d\alpha_k=\bigg( A_{k}\alpha_k\!+\!B\mathbf{P}_{\Gamma_{k}}\big[R_{k}^{-1}\big(B^{\prime}\beta_k
\!+\!D_{k}^{\prime}\gamma_k\big)\big]\!+\!F_1\sum_{i=1}^{K}\pi_i\mathbb{E}(\alpha_i(t)|\mathcal{F}_t^{W_0})
\!+\!b\bigg)dt\medskip\\
      &\!+\!\bigg(C\alpha_k+D_{k}
      \mathbf{P}_{\Gamma_{k}}\big[R_{k}^{-1}\big(B^{\prime}\beta_k+D_{k}^{\prime}\gamma_k\big)\big]
      \!+\!F_2\sum_{i=1}^{K}\pi_i\mathbb{E}(\alpha_i(t)|\mathcal{F}_t^{W_0})\!+\!H\alpha_0\!+\!\sigma\bigg)dW_k(t),
      \medskip\\
      &d\beta_k=\!-\!\big(A_{k}^{\prime}\beta_k\!-\!Q(\alpha_k\!-\!\rho\sum_{i=1}^{K}\pi_i\mathbb{E}
      (\alpha_i(t)|\mathcal{F}_t^{W_0})
      \!-\!(1\!-\!\rho)\alpha_0)
      \!+\!C^{\prime}\gamma_k\big)dt\!+\!\gamma_kdW_k(t)\!+\!\gamma_{k,0}dW_0(t),\medskip\\
     &\alpha_k(0)=x,\quad \quad \beta_k(T)=-G\big(\alpha_k(T)-\rho\sum_{i=1}^{K}\pi_i\mathbb{E}(\alpha_i(T)|\mathcal{F}_T^{W_0})-
     (1-\rho)\alpha_0(T)\big),
    \end{aligned}
    \right.
\end{equation}
where $\alpha_0$ satisfies the following FBSDE which is coupled with all $k$-type minor agents:
\begin{equation}\label{major cc}
\left\{
    \begin{aligned}
    &d\alpha_0=\bigg( A_0\alpha_0\!+\!B_0\mathbf{P}_{\Gamma_0}\big[R_0^{-1}\big(B^{\prime}_0\beta_0\!+\!D^{\prime}_0\gamma_0\big)\big]
    \!+\!F_0^1\sum_{i=1}^{K}\pi_i\mathbb{E}(\alpha_i(t)|\mathcal{F}_t^{W_0})\!+\!b_{0}\bigg)dt\medskip\\
      &\!+\!\bigg(C_0\alpha_0+D_0\mathbf{P}_{\Gamma_0}\big[R_0^{-1}\big(B^{\prime}_0\beta_0\!+\!D^{\prime}_0\gamma_0\big)\big]
      \!+\!F_0^2\sum_{i=1}^{K}\pi_i\mathbb{E}(\alpha_i(t)|\mathcal{F}_t^{W_0})\!+\!\sigma_0\bigg)dW_0(t),\medskip\\
      &d\beta_0=-\big(A^{\prime}_0\beta_0-Q_0(\alpha_0-\rho_0\sum_{i=1}^{K}\pi_i\mathbb{E}(\alpha_i(t)|\mathcal{F}_t^{W_0}))
      +C^{\prime}_0\gamma_0\big)dt+\gamma_0dW_0(t),\medskip\\
      &\alpha_0(0)=x_0,\quad \quad \beta_0(T)=-G_0\big(\alpha_0(T)-\rho_0\sum_{i=1}^{K}\pi_i\mathbb{E}(\alpha_i(T)|\mathcal{F}_T^{W_0})\big).
    \end{aligned}
    \right.
\end{equation}
We consider together the major agent and all kinds of minor agents, i.e. (\ref{major cc}) and (\ref{minor cc}) for all $1\leq k\leq K$, then there arise $2K+2$ fully coupled equations including $K+1$ forward equations and $K+1$ backward equations. Such fully coupled equations are called consistency condition system. Once we can solve it, then $m_k=\mathbb{E}(\alpha_k(t)|\mathcal{F}_t^{W_0})$ which depends on the conditional distribution of $\alpha_k$, this allows us, in  (\ref{minor cc}), to use arbitrary Brownian motion $W_k$ which is independent of $W_0$. Finally, let us introduce the following notation which will be used in the following sections
\begin{equation}\label{sum of mk}
\Phi(t):=\sum_{i=1}^{K}\pi_im_i=\sum_{i=1}^{K}\pi_i\mathbb{E}(\alpha_i(t)|\mathcal{F}_t^{W_0}).
\end{equation}

\section{Existence and uniqueness of consistency condition system: local time horizon case}
This section aims to establish the well-posedness of consistency condition system (\ref{minor cc})-(\ref{major cc}) in small time duration using the method of contraction mapping. Similar to the classical results on FBSDEs, see for example Chapter 1 Section 5 of Ma and Yong \cite{my}, we need introduce the following additional assumption:
\begin{description}
  \item[(A4)] We suppose $R_0^{-1}(\cdot), R_{\theta_i}^{-1}(\cdot)\in L^\infty(0,T;\mathcal{S}^m)$ and
$M_0|D|^2< 1$, where  $|D|:=\max_{0\leq k\leq K}|D_k|$ and $M_0:=\max\left\{|G_0|^2(1+\rho_0^2),\ |G|^2(1+\rho^2+(1-\rho)^2)\right\}$.

\end{description}

For simplicity, we denote
\[
\varphi_0(p,q):=
\mathbf{P}_{\Gamma_0}\big[R_0^{-1}\big(B^{\prime}_0p+D^{\prime}_0q\big)\big], \quad
\varphi_{\theta_i}(p,q):=\mathbf{P}_{\Gamma_{\theta_i}}\big[R_{\theta_i}^{-1}\big(B^{\prime}p
\!+\!D_{\theta_i}^{\prime}q\big)\big].
\]
We have the following theorem:
\begin{theorem}\label{wellposedness theorem}
Assume \emph{\textbf{(A1)-(A4)}}, then there exists a $T_0>0$, such that for any $T\in(0,T_0]$, the system  (\ref{minor cc})-(\ref{major cc}) has a unique solution $(\alpha_0,\beta_0,\gamma_0,\alpha_k,\beta_k,\gamma_k,\gamma_{k,0})$, $1\leq k\leq K$, satisfying
\begin{equation}\label{solution space}
\begin{aligned}
& \alpha_0,\beta_0\in L^{2}_{\mathcal{F}^{W_0}}(\Omega;C(0,T;\mathbb{R}^{n})), \quad
\alpha_k,\beta_k\in L^{2}_{\mathcal{F}^{k}}(\Omega;C(0,T;\mathbb{R}^{n})), \\
&\gamma_0\in L^{2}_{\mathcal{F}^{W_0}}(0,T;\mathbb{R}^{n}), \quad
\gamma_k,\gamma_{k,0} \in L^{2}_{\mathcal{F}^k}(0,T;\mathbb{R}^{n}), \quad 1\leq k\leq K.
\end{aligned}
\end{equation}
\end{theorem}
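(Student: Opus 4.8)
The plan is to obtain the solution of the coupled system (\ref{minor cc})--(\ref{major cc}) as the unique fixed point of a map $\mathcal{T}$ on the product Banach space $\mathcal{M}$ determined by (\ref{solution space}), in the spirit of the small-duration contraction argument of Ma and Yong (\cite{my}, Chapter~1, Section~5). Given an input tuple $\Lambda=(\alpha_0,\beta_0,\gamma_0,\alpha_k,\beta_k,\gamma_k,\gamma_{k,0})_{1\le k\le K}\in\mathcal{M}$, I would freeze the coupling ingredients at their input values --- the constrained control terms $\varphi_0(\beta_0,\gamma_0)$ and $\varphi_{\theta_i}(\beta_k,\gamma_k)$, the averaged field $\Phi=\sum_{i=1}^{K}\pi_i\mathbb{E}(\alpha_i|\mathcal{F}^{W_0})$ of (\ref{sum of mk}), and the major state $\alpha_0$ entering the minor dynamics. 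With these frozen, (\ref{major cc}) and (\ref{minor cc}) fully decouple: the forward components $\tilde\alpha_0,\tilde\alpha_k$ are recovered by integrating linear SDEs, while the backward components $(\tilde\beta_0,\tilde\gamma_0)$ and $(\tilde\beta_k,\tilde\gamma_k,\tilde\gamma_{k,0})$ are recovered by taking conditional expectations under $\mathcal{F}^{W_0}$ and $\mathcal{F}^{k}$ and invoking the martingale representation theorem. Each decoupled equation is well posed under (\textbf{A2})--(\textbf{A3}); note that (\textbf{A4}) renders $R_0^{-1},R_{\theta_i}^{-1}$ bounded so that $\varphi_0,\varphi_{\theta_i}$ are well defined, and that $\mathbf{P}_{\Gamma_k}$ is single-valued and non-expansive in $\|\cdot\|_{R_k}$. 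Writing $\mathcal{T}(\Lambda)=\tilde\Lambda$, a fixed point of $\mathcal{T}$ is exactly a solution of the consistency system.

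I would then estimate $\tilde\Lambda-\tilde\Lambda'$ against $\Lambda-\Lambda'$ for two inputs. For the forward block, the Burkholder--Davis--Gundy and Gronwall inequalities together with the $1$-Lipschitz property of the projections and of the conditional expectation bound $\mathbb{E}[\sup_t(|\Delta\tilde\alpha_0|^2+\sum_k|\Delta\tilde\alpha_k|^2)]$. Every contribution entering through a $dt$-integral --- all drifts, and in particular the control terms $B_0\varphi_0,B\varphi_k$ --- carries an explicit factor $T$ by Cauchy--Schwarz, and so do the diffusion terms built from $\alpha$, $\Phi$ or $\alpha_0$ (such as $C\alpha_k$, $F_2\Phi$, $H\alpha_0$), since those processes live in $C(0,T)$ and hence $\mathbb{E}\int_0^T|\cdot|^2\,dt\le T\,\mathbb{E}[\sup_t|\cdot|^2]$. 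The sole exception is the diffusion term $D_k\varphi_{\theta_i}(\beta_k,\gamma_k)$: its dependence on the martingale component $\gamma_k$ enters only through an It\^o integral, so by the isometry it contributes $\lesssim|D|^2\,\mathbb{E}\int_0^T|\Delta\gamma_k|^2\,dt$ with \emph{no} factor $T$. Symmetrically, the standard a~priori estimate for the backward equations --- in which every driver term sits inside the time integral $\int_t^T(\cdot)\,ds$ and therefore picks up a factor $T$ --- shows that $\mathbb{E}[\sup_t(|\Delta\tilde\beta_0|^2+\sum_k|\Delta\tilde\beta_k|^2)]+\mathbb{E}\int_0^T(|\Delta\tilde\gamma_0|^2+\sum_k(|\Delta\tilde\gamma_k|^2+|\Delta\tilde\gamma_{k,0}|^2))\,dt$ is controlled, the only $T$-free contribution being the terminal data, which is bounded by $\lesssim M_0\,\mathbb{E}[\sup_t(|\Delta\alpha_0|^2+\sum_k|\Delta\alpha_k|^2)]$; the coefficients $G_0,G$ and $\rho_0,\rho,1-\rho$ are precisely those packaged into $M_0$.

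The contraction therefore couples only the forward block $\{\alpha_0,\alpha_k\}$ to the backward block $\{\beta_0,\gamma_0,\beta_k,\gamma_k,\gamma_{k,0}\}$ through two time-independent, purely off-diagonal gains: the backward block enters $\tilde\alpha$ only via $\gamma$ with gain of order $|D|^2$, and the forward block enters $(\tilde\beta,\tilde\gamma)$ only via the terminal data with gain of order $M_0$, while all remaining contributions carry a factor $T$. Equipping $\mathcal{M}$ with the weighted norm that assigns weight $\lambda$ to the forward block $\mathbb{E}[\sup_t(|\alpha_0|^2+\sum_k|\alpha_k|^2)]$ and weight one to the backward block, the two off-diagonal gains become $\lambda|D|^2$ and $M_0/\lambda$, both of which can be forced strictly below $1$ by choosing $\lambda$ with $M_0<\lambda<|D|^{-2}$ --- a choice available exactly because $M_0|D|^2<1$, i.e. under (\textbf{A4}). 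Fixing such a $\lambda$ and then taking $T\le T_0$ small enough to absorb the $O(T)$ diagonal terms makes $\mathcal{T}$ a strict contraction on $\mathcal{M}$, so the Banach fixed point theorem delivers the unique solution in the class (\ref{solution space}). I expect the genuine obstacle to be precisely this non-vanishing feedback: because the diffusion coefficients depend on the martingale components $\gamma$ while the terminal costs feed the forward states back into the adjoint equations, the usual ``small time kills all coupling'' mechanism fails for exactly these two terms, and the product condition $M_0|D|^2<1$ is what keeps the resulting loop gain below $1$.
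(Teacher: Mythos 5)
Your proposal is correct and follows essentially the same route as the paper: decouple the system by freezing the couplings, solve the forward conditional mean-field SDEs and then the backward equations by martingale representation, and observe that the only two gains not killed by small $T$ are the It\^o-isometry gain $|D|^2$ from the diffusion term $D\varphi(\beta,\gamma)$ and the terminal-data gain $M_0$, whose product is below $1$ by \textbf{(A4)}. The paper packages this by composing the two maps so that $\mathcal{T}$ acts on the backward block alone with loop gain $M_0|D|^2+O(T)+O(\varepsilon)$, whereas you keep both blocks and split the product with a weight $\lambda\in(M_0,|D|^{-2})$ --- an equivalent bookkeeping of the same contraction.
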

\begin{proof}
Let $T_0\in(0,1]$ be undetermined and $0<T\leq T_0$. We denote
\[
\begin{aligned}
\mathcal{N}[0,T]:=&L^{2}_{\mathcal{F}^{W_0}}(\Omega;C(0,T;\mathbb{R}^{n}))\times L^{2}_{\mathcal{F}^{1}}(\Omega;C(0,T;\mathbb{R}^{n})) \times\ldots \times L^{2}_{\mathcal{F}^{K}}(\Omega;C(0,T;\mathbb{R}^{n}))\\
&\times L^{2}_{\mathcal{F}^{W_0}}(0,T;\mathbb{R}^{n}) \times L^{2}_{\mathcal{F}^{1}}(0,T;\mathbb{R}^{n})\times
\ldots \times L^{2}_{\mathcal{F}^{K}}(0,T;\mathbb{R}^{n})\\
& \times L^{2}_{\mathcal{F}^{1}}(0,T;\mathbb{R}^{n})\times
\ldots \times L^{2}_{\mathcal{F}^{K}}(0,T;\mathbb{R}^{n}).
\end{aligned}
\]
For $(Y_0,Y_1,\ldots,Y_K,Z_0, Z_1,\ldots,Z_K,\Upsilon_{1,0},\ldots,\Upsilon_{K,0})\in \mathcal{N}[0,T]$, we introduce the following norm:
\begin{equation}\label{norm}
\begin{aligned}
&\|(Y_0,Y_1,\ldots,Y_K,Z_0, Z_1,\ldots,Z_K,\Upsilon_{1,0},\ldots,\Upsilon_{K,0})\|_{{\mathcal{N}}[0,T]}^2\\
:=&
\sup_{t\in[0,T]}\mathbb{E}\bigg\{\sum_{k=0}^K|Y_k(t)|^2
+\sum_{k=0}^K\int_0^T|Z_k(s)|^2ds+\sum_{k=1}^K\int_0^T|\Upsilon_{k,0}(s)|^2ds\bigg\}.
\end{aligned}
\end{equation}
Let $\overline{\mathcal{N}}[0,T]$ be the completion of $\mathcal{N}[0,T]$ in $L^{2}_{\mathcal{F}^{W_0}}(0,T;\mathbb{R}^{n})\times L^{2}_{\mathcal{F}^{1}}(0,T;\mathbb{R}^{n})\times \ldots\times L^{2}_{\mathcal{F}^{k}}(0,T;\mathbb{R}^{n}) \times L^{2}_{\mathcal{F}^{W_0}}(0,T;\mathbb{R}^{n}) \times L^{2}_{\mathcal{F}^{1}}(0,T;\mathbb{R}^{n})\times \ldots\times L^{2}_{\mathcal{F}^{k}}(0,T;\mathbb{R}^{n})\times L^{2}_{\mathcal{F}^{1}}(0,T;\mathbb{R}^{n})\times
\ldots \times L^{2}_{\mathcal{F}^{K}}(0,T;\mathbb{R}^{n})$ under norm (\ref{norm}). Now for any
\[(Y_0^j,Y_1^j,\ldots,Y_K^j,Z_0^j, Z_1^j,\ldots,Z_K^j,\Upsilon_{1,0}^j,\ldots,\Upsilon_{k,0}^j)\in \mathcal{N}[0,T], \quad j=1,2,
\]
we solve respectively the following system including $K+1$ SDEs, for $1\leq k\leq K$:
\begin{equation}\label{conditioanl mean field SDE}
\left\{
\begin{aligned}
&d\alpha_0^j=\Big(A_0\alpha_0^j+B_0\varphi_0(Y_0^j,Z_0^j)
+F_0^1\sum_{i=1}^{K}\pi_i\mathbb{E}[\alpha_i^j(t)|\mathcal{F}_t^{W_0}]
+b_0\Big)dt\\
&\qquad+\Big(C_0\alpha_0^j+D_0\varphi_0(Y_0^j,Z_0^j)
+F_0^2\sum_{i=1}^{K}\pi_i\mathbb{E}[\alpha_i^j(t)|\mathcal{F}_t^{W_0}]
+\sigma_0\Big)dW_0(t)\\
&d\alpha_k^j=\Big(A_k\alpha_k^j+B\varphi_k(Y_k^j,Z_k^j)
+F_1\sum_{i=1}^{K}\pi_i\mathbb{E}[\alpha_i^j(t)|\mathcal{F}_t^{W_0}]
+b\Big)dt\\
&\qquad+\Big(C\alpha_k^j+D_k\varphi_k(Y_k^j,Z_k^j)
+F_2\sum_{i=1}^{K}\pi_i\mathbb{E}[\alpha_i^j(t)|\mathcal{F}_t^{W_0}]
+H\alpha_0^j+\sigma\Big)dW_k(t)\\
&\alpha_0^j(0)=x_0, \quad \alpha_k^j(0)=x.
\end{aligned}
\right.
\end{equation}
Then (\ref{conditioanl mean field SDE}) admits a unique solution  for $j=1,2$,
\[
(\alpha_0^j,\alpha_1^j,\ldots,\alpha_K^j)\in L^{2}_{\mathcal{F}^{W_0}}(\Omega;C(0,T;\mathbb{R}^{n}))\times L^{2}_{\mathcal{F}^{1}}(\Omega;C(0,T;\mathbb{R}^{n})) \times\ldots \times L^{2}_{\mathcal{F}^{K}}(\Omega;C(0,T;\mathbb{R}^{n})).
\]
Indeed, (\ref{conditioanl mean field SDE}) is a $n(K+1)$-dimensional SDE with the mean-field term $\sum_{i=1}^{K}\pi_i\mathbb{E}[\alpha_i(t)|\mathcal{F}_t^{W_0}]$. We can prove the well-posedness of such SDEs system by noticing $\mathbb{E}|\mathbb{E}[\alpha_i(t)|\mathcal{F}_t^{W_0}]|^2\leq \mathbb{E}|\alpha_i(t)|^2$ and by constructing a fixed point using the classical contraction mapping method, we omit the proof here. Now let us denote for $0\leq k\leq K$,
\[
\begin{aligned}
&\hat{\alpha}_k:=\alpha_k^1-\alpha_k^2, \quad
\hat{\varphi}_k:=\varphi_k(Y_k^1,Z_k^1)-\varphi_k(Y_k^2,Z_k^2)\\
&\hat{Y}_k:=Y_k^1-Y_k^2, \quad \hat{Z}_k:=Z_k^1-Z_k^2, \quad \hat{\Upsilon}_{k,0}:=\Upsilon_{k,0}^1-\Upsilon_{k,0}^2.
\end{aligned}
\]
Applying It\^{o}'s formula, we obtain
\[
\begin{aligned}
d|\hat{\alpha}_0|^2=&2\Big<\hat{\alpha}_0,A_0\hat{\alpha}_0+B_0\hat{\varphi}_0
+F_0^1\sum_{i=1}^{K}\pi_i\mathbb{E}[\hat{\alpha}_i(t)|\mathcal{F}_t^{W_0}]\Big>dt\\
&+\Big|C_0\hat{\alpha_0}+D_0\hat{\varphi}_0
+F_0^2\sum_{i=1}^{K}\pi_i\mathbb{E}[\hat{\alpha}_i(t)|\mathcal{F}_t^{W_0}]\Big|^2dt\\
&+2\Big<\hat{\alpha}_0,C_0\hat{\alpha}_0+D_0\hat{\varphi}_0
+F_0^2\sum_{i=1}^{K}\pi_i\mathbb{E}[\hat{\alpha}_i(t)|\mathcal{F}_t^{W_0}]\Big>dW_0(t)\\
\end{aligned}
\]
and
\[
\begin{aligned}
d|\hat{\alpha}_k|^2=&2\Big<\hat{\alpha}_k,A_k\hat{\alpha}_k+B\hat{\varphi}_k
+F_1\sum_{i=1}^{K}\pi_i\mathbb{E}[\hat{\alpha}_i(t)|\mathcal{F}_t^{W_0}]\Big>dt\\
&+\Big|C\hat{\alpha_k}+D_k\hat{\varphi}_k
+F_2\sum_{i=1}^{K}\pi_i\mathbb{E}[\hat{\alpha}_i(t)|\mathcal{F}_t^{W_0}]+H\hat{\alpha}_0\Big|^2\\
&+2\Big<\hat{\alpha}_k,C\hat{\alpha}_k+D_k\hat{\varphi}_k
+F_2\sum_{i=1}^{K}\pi_i\mathbb{E}[\hat{\alpha}_i(t)|\mathcal{F}_t^{W_0}]+H\hat{\alpha}_0\Big>dW_k(t).\\
\end{aligned}
\]
Thus by using  \textbf{(A2)-(A3)}, $\mathbb{E}\left|\mathbb{E}[\hat{\alpha}_i(s)|\mathcal{F}_s^{W_0}]\right|\leq \mathbb{E}|\hat{\alpha}_i(s)|$, $\mathbb{E}\left|\mathbb{E}[\hat{\alpha}_i(s)|\mathcal{F}_s^{W_0}]\right|^2\leq \mathbb{E}|\hat{\alpha}_i(s)|^2$ as well as that $\varphi_k$ is Lipschitz with Lipschitz constant $1$(see Proposition 4.2 of \cite{HHL2016}), we have
\begin{equation}\label{SDE estimate 1}
\begin{aligned}
\mathbb{E}|\hat{\alpha}_0(t)|^2\leq & 2\mathbb{E}\int_0^t\Big(|A_0||\hat{\alpha}_0|^2+|B_0||\hat{\alpha}_0||\hat{\varphi}_0|
+|F_0^1||\hat{\alpha}_0|\sum_{i=1}^{K}\left|\mathbb{E}[\hat{\alpha}_i(s)|\mathcal{F}_s^{W_0}]\right|\Big)ds\\
&+\mathbb{E}\int_0^t\left|C_0\hat{\alpha}_0+D_0\hat{\varphi}_0
+F_0^2\sum_{i=1}^{K}\pi_i\mathbb{E}[\hat{\alpha}_i(s)|\mathcal{F}_s^{W_0}]\right|^2ds\\
\leq & C_{\varepsilon}\mathbb{E}\int_0^t
\sum_{i=0}^{K}|\hat{\alpha}_i|^2ds+\mathbb{E}\int_0^t(|D_0|^2+\varepsilon)(|\hat{Y}_0|^2+|\hat{Z}_0|^2)ds\\
\end{aligned}
\end{equation}
and
\begin{equation}\label{SDE estimate 2}
\begin{aligned}
\mathbb{E}|\hat{\alpha}_k(t)|^2\leq&2\mathbb{E}\int_0^t\Big(|A_k||\hat{\alpha}_k|^2
+|B||\hat{\alpha}_k||\hat{\varphi}_k|
+|F_1||\hat{\alpha}_k|\sum_{i=1}^{K}\left|\mathbb{E}[\hat{\alpha}_i(s)|\mathcal{F}_s^{W_0}]\right|\Big)ds\\
&+\mathbb{E}\int_0^t\left|C\hat{\alpha}_k+D_k\hat{\varphi}_k
+\sum_{i=1}^{K}F_2\mathbb{E}[\hat{\alpha}_i(s)|\mathcal{F}_s^{W_0}]+H\hat{\alpha}_0\right|^2ds\\
\leq & C_{\varepsilon}\mathbb{E}\int_0^t\sum_{i=0}^{K}|\hat{\alpha}_i|^2ds
+\mathbb{E}\int_0^t(|D_k|^2+\varepsilon)(|\hat{Y}_k|^2+|\hat{Z}_k|^2)ds,\\
\end{aligned}
\end{equation}
where $C_\varepsilon$ is a constant independent of $T$, which may vary line by line. Adding up (\ref{SDE estimate 1}) and (\ref{SDE estimate 2}) for $1\leq k\leq K$, we have
\[
\begin{aligned}
\mathbb{E}\sum_{i=0}^{K}|\hat{\alpha}_i(t)|^2\leq
C_{\varepsilon}\mathbb{E}\int_0^t\sum_{i=0}^{K}|\hat{\alpha}_i(s)|^2ds
+\mathbb{E}\int_0^t\sum_{i=0}^{K}(|D_i|^2+\varepsilon)(|\hat{Y}_i|^2+|\hat{Z}_i|^2)ds,
\end{aligned}
\]
and the Gronwall's inequality yields
\begin{equation}\label{SDE estimate 3}
\mathbb{E}\sum_{i=0}^{K}|\hat{\alpha}_i(t)|^2\leq
e^{C_{\varepsilon}T}\mathbb{E}\int_0^T\sum_{i=0}^{K}(|D_i|^2+\varepsilon)(|\hat{Y}_i|^2+|\hat{Z}_i|^2)ds.
\end{equation}
Next, we solve the following BSDEs, for $j=1,2$,
\begin{equation}\label{BSDE}
\left\{
\begin{aligned}
&d\beta_0^j=-\Big[A_0'Y_0^j-Q_0\big(\alpha_0^j
-\rho_0\sum_{i=1}^{K}\pi_i\mathbb{E}[\alpha_i^j(t)|\mathcal{F}_t^{W_0}]\big)+C_0'Z_0^j\Big]dt+\gamma_0^jdW_0(t),\\
&d\beta_k^j=-\Big[A_k'Y_k^j-Q\big(\alpha_k^j
-\rho\sum_{i=1}^{K}\pi_i\mathbb{E}[\alpha_i^j(t)|\mathcal{F}_t^{W_0}]\big)-(1-\rho)\alpha_0^j
+C'Z_k^j\Big]dt\\
&\qquad\qquad\qquad\qquad\qquad\qquad\qquad\qquad+\gamma_k^jdW_k(t)+\gamma_{k,0}^jdW_0(t),\\
&\beta_0^j(T)=-G_0\Big(\alpha_0^j(T)-\rho_0\sum_{i=1}^{K}\pi_i\mathbb{E}[\alpha_i^j(T)|\mathcal{F}_T^{W_0}]\Big),\\
&\beta_k^j(T)=-G\Big(\alpha_k^j(T)-\rho\sum_{i=1}^{K}\pi_i\mathbb{E}[\alpha_i^j(T)|\mathcal{F}_T^{W_0}]
-(1-\rho)\alpha_0^j(T)\Big).
\end{aligned}
\right.
\end{equation}
Since \textbf{(A2)-(A3)} hold and $\alpha_i$, $0\leq i\leq K$ have been solved from (\ref{conditioanl mean field SDE}), the classical result of BSDEs yields that (\ref{BSDE}) admits a unique solution
\[
(\beta_0^j,\beta_1^j,\ldots,\beta_K^j,\gamma_0^j, \gamma_1^j,\ldots,\gamma_K^j, \gamma_{1,0}^j,\ldots,\gamma_{K,0}^j)\in \mathcal{N}[0,T]\subseteq \overline{\mathcal{N}}[0,T].
\]
Thus we have defined a mapping  through (\ref{conditioanl mean field SDE}) and (\ref{BSDE})\footnotesize
\[
\begin{aligned}\footnotesize
\mathcal{T}:&\overline{\mathcal{N}}[0,T]\rightarrow \overline{\mathcal{N}}[0,T], \\
 &(Y_0^j,Y_1^j,\ldots,Y_K^j,Z_0^j, Z_1^j,\ldots,Z_K^j,\Upsilon_{1,0}^j,\ldots,\Upsilon_{K,0}^j)\mapsto (\beta_0^j,\beta_1^j,\ldots,\beta_K^j,\gamma_0^j, \gamma_1^j,\ldots,\gamma_K^j, \gamma_{1,0}^j,\ldots,\gamma_{K,0}^j).
\end{aligned}
\]
\normalsize
Similarly, we denote
\[
\begin{aligned}
\hat{\beta}_k:=\beta_k^1-\beta_k^2, \quad \hat{\gamma}_k:=\gamma_k^1-\gamma_k^2, \text{ for }0\leq k\leq K \text{ and } \hat{\gamma}_{k,0}:=\gamma_{k,0}^1-\gamma_{k,0}^2, \text{ for } 1\leq k\leq K.
\end{aligned}
\]
Applying It\^{o}'s formula to $|\hat{\beta_0}(t)|^2$, and noticing $\mathbb{E}\left|\mathbb{E}[\hat{\alpha}_i(s)|\mathcal{F}_s^{W_0}]\right|^2\leq \mathbb{E}|\hat{\alpha}_i(s)|^2$ , we obtain
\[
\begin{aligned}
&\mathbb{E}\left(|\hat{\beta_0}(t)|^2+\int_t^T|\hat{\gamma}_0|^2ds\right)\\
=&\mathbb{E}|\hat{\beta}_0(T)|^2+2\mathbb{E}\int_t^T\left<\hat{\beta_0},
A_0'\hat{Y}_0-Q_0\Big(\hat{\alpha}_0-\rho_0\sum_{i=1}^{K}\pi_i\mathbb{E}[\hat{\alpha}_i(s)|\mathcal{F}_s^{W_0}]\Big)
+C_0'\hat{Z}_0\right>ds\\
\leq& |G_0|^2(1+\rho_0^2)\mathbb{E}\sum_{i=0}^{K}|\hat{\alpha}_i(T)|^2
\!+\!C_\varepsilon\mathbb{E}\int_t^T|\hat{\beta}_0|^2ds+\mathbb{E}\int_t^T\sum_{i=0}^{K}|\hat{\alpha}_i|^2ds
\!+\!\varepsilon \int_t^T(|\hat{Y}_0|^2\!+\!|\hat{Z}_0|^2)ds.
\end{aligned}
\]
Substituting (\ref{SDE estimate 3}) into above inequality, we have
\begin{equation}\label{BSDE estimate 1}
\begin{aligned}
\mathbb{E}\left(|\hat{\beta_0}(t)|^2\!+\!\int_t^T|\hat{\gamma}_0|^2ds\right)\leq &
\left(|G_0|^2(1+\rho_0^2)\!+\!T\right)e^{C_{\varepsilon}T}\mathbb{E}\sum_{i=0}^{K}\int_0^T
(|D_i|^2+\varepsilon)(|\hat{Y}_i|^2+|\hat{Z}_i|^2)ds\\
&+C_\varepsilon\mathbb{E}\int_t^T|\hat{\beta}_0|^2ds
+\varepsilon \int_t^T(|\hat{Y}_0|^2+|\hat{Z}_0|^2)ds.
\end{aligned}
\end{equation}
Similarly, by applying It\^{o}'s formula to $|\hat{\beta_k}(t)|^2$, $1\leq k\leq K$, we have
\begin{equation}\label{BSDE estimate 2}
\begin{aligned}
&\mathbb{E}\left | \hat{\beta}_k(t)\right |^2+\mathbb{E}\int_{t}^{T}\left | \hat{\gamma}_k\right |^2ds+\mathbb{E}\int_{t}^{T}\left | \hat{\gamma}_{k,0}\right |^2ds\\
=&\mathbb{E}\left |\hat{\beta}_k(T) \right |^2\!+\!2\mathbb{E}\int_{t}^{T}\left \langle \hat{\beta}_k,A_k'\hat{Y}_k\!-\!Q\left(\hat{\alpha}_k\!-\!\rho\sum_{i=1}^{K}\pi_i\mathbb{E}\left [ \hat{\alpha}_i(s)|\mathcal{F}_s^{W_0} \right ]\! -\!(1\!-\!\rho)\hat{\alpha}_0\right)\!-\!C'\hat{Z}_k\right \rangle ds.
\end{aligned}
\end{equation}
Noticing that $\rho\in[0,1]$, we have
\[
\begin{aligned}
\mathbb{E}\left |\hat{\beta}_k(T) \right |^2\leq&|G|^2\Big|\hat{\alpha}_k(T)-\rho\sum_{i=1}^{K}\pi_i\mathbb{E}[\hat{\alpha}_i(T)|\mathcal{F}_T^{W_0}]
-(1-\rho)\hat{\alpha}_0(T)\Big|^2\\
\leq &|G|^2(1+\rho^2+(1-\rho)^2)\left(\mathbb{E}\left|\hat{\alpha}_k(T)-\rho\mathbb{E}[\hat{\alpha}_k(T)|\mathcal{F}_T^{W_0}]\right|^2
+\sum_{i=0,i\neq k}^{K}\mathbb{E}|\hat{\alpha}_i(T)|^2\right)\\
\leq &|G|^2(1+\rho^2+(1-\rho)^2)\mathbb{E}\sum_{i=0}^{K}|\hat{\alpha}_i(T)|^2,
\end{aligned}
\]
where we have used the fact that
\[
\begin{aligned}
&\mathbb{E}\left|\hat{\alpha}_k(T)-\rho\mathbb{E}[\hat{\alpha}_k(T)|\mathcal{F}_T^{W_0}]\right|^2\\
=&\mathbb{E}|\hat{\alpha}_k(T)|^2+\rho^2\mathbb{E}|\mathbb{E}[\hat{\alpha}_k(T)|\mathcal{F}_T^{W_0}]|^2
-2\rho\mathbb{E}\left[\hat{\alpha}_k(T)\mathbb{E}[\hat{\alpha}_k(T)|\mathcal{F}_T^{W_0}]\right]\\
=&\mathbb{E}|\hat{\alpha}_k(T)|^2+\rho^2\mathbb{E}|\mathbb{E}[\hat{\alpha}_k(T)|\mathcal{F}_T^{W_0}]|^2
-2\rho\mathbb{E}\left(\mathbb{E}\left[\hat{\alpha}_k(T)\mathbb{E}[\hat{\alpha}_k(T)|\mathcal{F}_T^{W_0}]\right]
|\mathcal{F}_T^{W_0}\right)\\
=&\mathbb{E}|\hat{\alpha}_k(T)|^2+(\rho^2-2\rho)\mathbb{E}|\mathbb{E}[\hat{\alpha}_k(T)|\mathcal{F}_T^{W_0}]|^2
\leq\mathbb{E}\left|\hat{\alpha}_k(T)\right|^2.
\end{aligned}
\]
Thus, (\ref{BSDE estimate 2}) yields that
\[
\begin{aligned}
&\mathbb{E}\left | \hat{\beta}_k(t)\right |^2+\mathbb{E}\int_{t}^{T}\left | \hat{\gamma}_k\right |^2ds+\mathbb{E}\int_{t}^{T}\left | \hat{\gamma}_{k,0}\right |^2ds\\
=&\mathbb{E}\left |\hat{\beta}_k(T) \right |^2+2\mathbb{E}\int_{t}^{T}\left \langle \hat{\beta}_k,A_k'\hat{Y}_k-Q\left(\hat{\alpha}_k-\rho\sum_{i=1}^{K}\pi_i\mathbb{E}\left [ \hat{\alpha}_i(s)|\mathcal{F}_s^{W_0} \right ] -(1-\rho)\hat{\alpha}_0\right)-C'\hat{Z}_k\right \rangle ds\\
\leq&|G|^2(1+\rho^2+(1-\rho)^2)\mathbb{E}\sum_{i=0}^{K}|\hat{\alpha}_i(T)|^2+C_\varepsilon\mathbb{E}\int_{t}^{T} \left|\beta_k \right |^2ds\\
&\qquad\qquad\qquad+\mathbb{E}\int_{t}^{T} \sum_{i=0}^{K} \left|\alpha_i \right|^2ds+\varepsilon\mathbb{E}\int_t^T(|\hat{Y}_k|^2+|\hat{Z}_k|^2)ds.
\end{aligned}
\]
Substituting (\ref{SDE estimate 3}) into above inequality, we have
\begin{equation}\label{BSDE estimate 3}
\begin{aligned}
&\mathbb{E}\left | \hat{\beta}_k(t)\right |^2+\mathbb{E}\int_{t}^{T}\left | \hat{\gamma}_k\right |^2ds+\mathbb{E}\int_{t}^{T}\left | \hat{\gamma}_{k,0}\right |^2ds\\
\leq & \left(|G|^2(1+\rho^2+(1-\rho)^2)\!+\!T\right)e^{C_{\varepsilon}T}\mathbb{E}\sum_{i=0}^{K}\int_0^T
(|D_i|^2\!+\!\varepsilon)(|\hat{Y}_i|^2\!+\!|\hat{Z}_i|^2)ds\\
&+C_\varepsilon\mathbb{E}\int_t^T|\hat{\beta}_k|^2ds
+\varepsilon\mathbb{E}\int_t^T(|\hat{Y}_k|^2+|\hat{Z}_k|^2)ds.
\end{aligned}
\end{equation}
Adding up (\ref{BSDE estimate 1}) and (\ref{BSDE estimate 3}) for $1\leq k\leq K$, we obtain (recall $|D|^2:=\max_{0\leq k\leq K}|D_k|^2$ and $M_0:=\max\left\{|G_0|^2(1+\rho_0^2),\ |G|^2(1+\rho^2+(1-\rho)^2)\right\}$)
\[
\begin{aligned}
&\mathbb{E}\sum_{i=0}^{K}\left |\hat{\beta}_i\right|^2+\mathbb{E}\sum_{i=0}^{K}\int_{t}^{T}\left | \hat{\gamma}_i \right|^2ds+\mathbb{E}\sum_{i=1}^{K}\int_{t}^{T}\left | \hat{\gamma}_{i,0}\right |^2ds\\
\leq & \left(M_0\!+\!T\right)e^{C_{\varepsilon}T}\mathbb{E}\sum_{i=0}^{K}\int_0^T
(|D_i|^2\!+\!\varepsilon)(|\hat{Y}_i|^2\!+\!|\hat{Z}_i|^2)ds\\
&\qquad\qquad\qquad\!+\!C_\varepsilon\mathbb{E}\int_{t}^{T}\sum_{i=0}^{K}\left |\hat{\beta}_i\right|^2ds+\varepsilon\mathbb{E}\int_{0}^{T}\sum_{i=0}^{K}( | \hat{Y}_i |^2\!+\! | \hat{Z}_i |^2) ds\\
\leq & C_\varepsilon\mathbb{E}\int_{t}^{T}\sum_{i=0}^{K}\left |\hat{\beta}_i\right|^2ds+\left[\left(M_0\!+\!T\right)e^{C_{\varepsilon}T}(|D|^2+\varepsilon)\!+\!\varepsilon\right]
\mathbb{E}\sum_{i=0}^{K}\int_0^T(|\hat{Y}_i|^2+|\hat{Z}_i|^2)ds.
\end{aligned}
\]
The Gronwall's inequality yields that
\begin{equation}\label{contraction 1}
\begin{aligned}
&\mathbb{E}\sum_{i=0}^{K}\left |\hat{\beta}_i\right|^2+\mathbb{E}\sum_{i=0}^{K}\int_{t}^{T}\left | \hat{\gamma}_i \right|^2ds+\mathbb{E}\sum_{i=1}^{K}\int_{t}^{T}\left | \hat{\gamma}_{i,0}\right |^2ds\\
\leq
&e^{C_{\varepsilon}T}
\left[\left(M_0\!+\!T\right)e^{C_{\varepsilon}T}(|D|^2+\varepsilon)+\varepsilon\right]
\mathbb{E}\sum_{i=0}^{K}\int_0^T(|\hat{Y}_i|^2+|\hat{Z}_i|^2)ds\\
\leq & e^{C_{\varepsilon}T}T
\left[\left(M_0\!+\!T\right)e^{C_{\varepsilon}T}(|D|^2+\varepsilon)+\varepsilon\right]
\sup_{0\leq t\leq T}\mathbb{E}\sum_{i=0}^{K}|\hat{Y}_i(t)|^2\\
&+e^{C_{\varepsilon}T}
\left[\left(M_0\!+\!T\right)e^{C_{\varepsilon}T}(|D|^2+\varepsilon)+\varepsilon\right]
\mathbb{E}\sum_{i=0}^{K}\int_0^T|\hat{Z}_i|^2ds\\
\leq & e^{C_{\varepsilon}T}(T+1)
\left[\left(M_0\!+\!T\right)e^{C_{\varepsilon}T}(|D|^2+\varepsilon)+\varepsilon\right]\\
&\qquad\qquad
\cdot\|(\hat{Y}_0,\hat{Y}_1,\ldots,\hat{Y}_K,\hat{Z}_0,\hat{Z}_1,\ldots,\hat{Z}_K,
\hat{\Upsilon}_{1,0},\ldots,\hat{\Upsilon}_{K,0})\|_{\overline{\mathcal{N}}[0,T]}\\
=& e^{C_{\varepsilon}T}(T+1)
\left[M_0e^{C_{\varepsilon}T}(|D|^2+\varepsilon)+\varepsilon+Te^{C_{\varepsilon}T}(|D|^2+\varepsilon)\right]\\
&\qquad\qquad
\cdot\|(\hat{Y}_0,\hat{Y}_1,\ldots,\hat{Y}_K,\hat{Z}_0,\hat{Z}_1,\ldots,\hat{Z}_K,
\hat{\Upsilon}_{1,0},\ldots,\hat{\Upsilon}_{K,0})\|_{\overline{\mathcal{N}}[0,T]}
\end{aligned}
\end{equation}
Noticing that assumption \textbf{(A4)} holds, by first choosing $\varepsilon>0$ small enough such that
$M_0(|D|^2+\varepsilon)+\varepsilon<1$,
then choosing $T>0$ small enough, we obtain from (\ref{contraction 1}) that
\[
\begin{aligned}
&\|(\hat{\beta}_0,\hat{\beta}_1,\ldots,\hat{\beta}_K,\hat{\gamma}_0,\hat{\gamma}_1,
\ldots,\hat{\gamma}_K,\hat{\gamma}_{1,0},\ldots,\hat{\gamma}_{K,0})\|_{\overline{\mathcal{N}}[0,T]}\\
\leq &\delta \|(\hat{Y}_0,\hat{Y}_1,\ldots,\hat{Y}_K,\hat{Z}_0,\hat{Z}_1,\ldots,\hat{Z}_K,
\hat{\Upsilon}_{1,0},\ldots,\hat{\Upsilon}_{K,0})\|_{\overline{\mathcal{N}}[0,T]},
\end{aligned}
\]
for some $0< \delta< 1$. This means that the mapping $\mathcal{T}:\overline{\mathcal{N}}[0,T]\rightarrow \overline{\mathcal{N}}[0,T]$ is contractive. By the contraction mapping theorem, there exists a unique fixed point \[(\beta_0,\beta_1,\ldots,\beta_K,\gamma_0, \gamma_1,\ldots,\gamma_K,\gamma_{1,0},\ldots,\gamma_{K,0})\in\overline{ \mathcal{N}}[0,T].\]
Moreover, classical BSDE theory allows us to show that
\[(\beta_0,\beta_1,\ldots,\beta_K,\gamma_0, \gamma_1,\ldots,\gamma_K,\gamma_{1,0},\ldots,\gamma_{K,0})\in \mathcal{N}[0,T].\]
Let $\alpha_k$, $0\leq k\leq K$, be the corresponding solution of (\ref{conditioanl mean field SDE}). Then, one can obtain that  the system  (\ref{minor cc})-(\ref{major cc}) has a unique solution $(\alpha_0,\beta_0,\gamma_0,\alpha_k,\beta_k,\gamma_k,\gamma_{k,0})$, $1\leq k\leq K$, such that (\ref{solution space}) holds.
\end{proof}

\section{Existence and uniqueness of consistency condition system-global time horizon case}
The section aims to establish the well-posedness of consistency condition system (\ref{minor cc})-(\ref{major cc}) for arbitrary $T$, we first study one general kind of conditional mean-field forward-backward stochastic differential equations (MF-FBSDE) by using the discounting method of Pardoux and Tang \cite{PT-1999}.

Let $(\Omega,
\mathcal F, \mathbb{P})$ is a complete, filtered probability space satisfying usual conditions and $\{W_i(t),\ 0\le i\leq d\}_{0 \leq t \leq T}$ is a $d+1$-dimensional Brownian motion on this space. Let $\mathcal{F}_t$ be the natural filtration generated by  $\{W_i(s), 0\leq i\leq d, 0\leq s\leq t\}$ and augmented by $\mathcal{N}_{\mathbb{P}}$ (the class of $\mathbb{P}$-null sets of $\mathcal{F}$). Let $\mathcal F^{W_0}_t$ be  the augmentation of $\sigma\{W_0(s), 0\leq s\leq t\}$ by $\mathcal{N}_{\mathbb{P}}$. We consider the following general conditional MF-FBSDE:
\begin{equation}\label{General MF-FBSDE}
\left\{
\begin{array}
[c]{l}%
dX(s)= b(s,X(s),\mathbb{E}[X(s)|\mathcal{F}^{W_0}_s],Y(s),Z(s)])ds
+\sigma(s,X(s),\mathbb{E}[X(s)|\mathcal{F}^{W_0}_s],Y(s),Z(s))dW(s),\medskip\\
-dY(s)=f(s,X(s),\mathbb{E}[X(s)|\mathcal{F}^{W_0}_s],Y(s),Z(s))ds-Z(s)dW(s),\quad s\in[0,T],\medskip\\
X(0)=x, \quad Y(T)=g(X(T),\mathbb{E}[X(T)|\mathcal{F}^{W_0}_T]),
\end{array}
\right.
\end{equation}
where the adapted processes $X,Y,Z$ take their values in $\mathbb{R}^{n},\mathbb{R}^{l}$ and $\mathbb{R}^{l\times (d+1)}$, respectively. The coefficients $b,\sigma$ and $f$ are defined on $\Omega\times[0,T]\times\mathbb{R}^{n}\times\mathbb{R}^{n}\times\mathbb{R}^{l}\times\mathbb{R}^{l\times (d+1)}$, such that $b(\cdot,\cdot,x,m,y,z)$, $\sigma(\cdot,\cdot,x,m,y,z)$ and $f(\cdot,\cdot,x,m,y,z)$ are $\{\mathcal{F}_{t}\}$-progressively measurable processes, for all fixed $(x,m,y,z)\in\mathbb{R}^{n}\times\mathbb{R}^{n}\times\mathbb{R}^{l}\times\mathbb{R}^{l\times (d+1)}$. The coefficient $g$ is defined on $\Omega\times\mathbb{R}^{n}\times\mathbb{R}^{n}$ and $g(\cdot,x,m)$ is $\mathcal{F}_{T}$-measurable, for all fixed $(x,m)\in\mathbb{R}^{n}\times\mathbb{R}^{n}$. Moreover, the functions $b,\sigma,f$ and $g$ are continuous w.r.t.  $(x,m,y,z)\in\mathbb{R}^{n}\times\mathbb{R}^{n}\times\mathbb{R}^{l}\times\mathbb{R}^{l\times (d+1)}$ and satisfy the following assumptions:
\begin{itemize}
\item[$\left(H_{1}\right)  $] There exist $\lambda_1,\lambda_2\in\mathbb{R}$ and positive constants $k_{0},k_{i}$, $i=1,2,\ldots,12$ such that for all $t$, $x$, $x_{1}$, $x_{2}$, $m$, $m_{1}$, $m_{2}$, $y$, $y_{1}$, $y_{2}$, $z$, $z_{1}$, $z_{2}$ a.s.
\[
\begin{array}
[c]{rl}
(i) & \langle b(t,x_{1},m,y,z)-b(t,x_{2},m,y,z),x_1-x_2\rangle \leq
\lambda_1|x_{1}-x_{2}|^2,\medskip\\
(ii) & | b(t,x,m_1,y_{1},z_{1})-b(t,x,m_2,y_{2},z_{2})| \leq
k_1|m_1-m_2|+k_2| y_{1}-y_{2}|+k_3|z_{1}-z_{2}|,\medskip\\
(iii) & | b(t,x,m,y,z)| \leq |b(t,0,m,y,z)|+k_0(1+|x|),\medskip\\
(iv) & \langle f(t,x,m,y_1,z)-f(t,x,m,y_2,z),y_1-y_2\rangle \leq
\lambda_2|y_{1}-y_{2}|^2,\medskip\\
(v) & | f(t,x_1,m_1,y,z_{1})-f(t,x_1,m_2,y,z_{2})| \leq
k_4| x_{1}-x_{2}|+k_5|m_1-m_2|
+k_6|z_{1}-z_{2}|,\medskip\\
(vi) & | f(t,x,m,y,z)| \leq |f(t,x,m,0,z)|+k_0(1+|y|),\medskip\\
(vii)& |\sigma(t,x_{1},m_1,y_{1},z_{1})-\sigma(t,x_{2},m_2,y_{2},z_{2})|^{2} \medskip\\ &
\leq k_7^2| x_{1}-x_{2}|^{2}+k_8^2 |m_1-m_2|^2+k_9^2| y_{1}-y_{2}|^{2}
+k_{10}^{2}|z_{1}-z_{2}|^{2},\medskip\\
(viii)& |g(x_{1},m_1)-g(x_{2},m_2)|^2\leq
k_{11}^2| x_{1}-x_{2}|+k_{12}^2| m_{1}-m_{2}|.
\end{array}
\]
\item[$\left(  H_{2}\right)  $] It holds that
\[
\mathbb{E}\int_0^T\left(|b(s,0,0,0,0)|^2+|\sigma(s,0,0,0,0)|^2+|f(s,0,0,0,0)|^2\right)ds+
\mathbb{E}|g(0,0)|^2<+\infty.
\]
\end{itemize}
\begin{remark} \label{additional conditional}
From the mean-field structure of (\ref{General MF-FBSDE}), sometimes the following conditions holds:
$\left(H_{1}\right)-(i'): $ There exist $\lambda_1,\widehat{k}_1\in\mathbb{R}$, such that for all $t$, $y$, $z$  and process  $X_{1}$, $X_{2}$, a.s.
\[
\begin{array}
[c]{rl}
& \mathbb{E}\langle b(t,X_1,\mathbb{E}[X_1(t)|\mathcal{F}^{W_0}_t],y,z)
-b(t,X_{2},\mathbb{E}[X_2(t)|\mathcal{F}^{W_0}_t],y,z),X_1-X_2\rangle \\
&\leq
(\lambda_1+\widehat{k}_1)\mathbb{E}|X_{1}-X_{2}|^2.
\end{array}
\]
For example, if $b(t,x,m,y,z)=\lambda_1 x+\widehat{k}_1m+b_1(y,z)$, then it  obviously satisfies above assumption.
Indeed, our mean-filed FBSDE satisfies this assumption.
\end{remark}
Let $\mathcal{H}$ be an Euclidean space. Recall that $L^2_{\mathcal F}(0,T;\mathcal{H})$, denotes the Hilbert space of $\mathcal{H}$-valued $\{\mathcal{F}_{s}\}$-progressively measurable processes $\{u(s), ~s\in[0,T]\}$ such that
\[\|u\|:=\left(E\int_{0}^{T}|u(s)|^{2}ds\right)^{1/2}<\infty.
\]
For $\lambda\in\mathbb{R}$, we define an equivalent norm on $L^2_{\mathcal F}(0,T;\mathcal{H})$:
\[\|u\|_{\lambda}:=\left(E\int_{0}^{T}e^{-\lambda s}|u(s)|^{2}ds\right)^{1/2}.
\]

Now let us consider MF-FBSDE (\ref{General MF-FBSDE}), its fully-coupled structure arises difficulties for establishing its wellposedness. Similar to \cite{PT-1999}, when the coupling is weak enough, MF-FBSDE (\ref{General MF-FBSDE}) should be solvable. The following is the main results on MF-FBSDE (\ref{General MF-FBSDE}) and the proof is give in the appendix.
\begin{theorem}\label{wellposedness MF-FBSDE PT}
Suppose that assumption $(H_1)$ and $(H_2)$ hold.  Then there exists a $\delta_0>0$, which depends on $k_i,\lambda_1,\lambda_2,T$, for $i=1,4,5,6,7,8,11,12$ such that when $k_2,k_3,k_9,k_{10}\in[0,\delta_0)$, there exists a unique adapted solution $(X,Y,Z)\in L^2_{\mathcal F}(0,T;\mathbb{R}^n)\times L^2_{\mathcal F}(0,T;\mathbb{R}^m)\times L^2_{\mathcal F}(0,T;\mathbb{R}^{m\times (d+1)})$ to MF-FBSDE (\ref{General MF-FBSDE}). Further, if $2(\lambda_1+\lambda_2)<-2k_1-k_6^2-k_7^2-k_8^2$, there exists a $\delta_1>0$, which depends on $k_i,\lambda_1,\lambda_2$, for $i=1,4,5,6,7,8,11,12$  and is independent of  $T$, such that when $k_2,k_3,k_9,k_{10}\in[0,\delta_1)$, there exists a unique adapted solution $(X,Y,Z)$ to MF-FBSDE (\ref{General MF-FBSDE}).
\end{theorem}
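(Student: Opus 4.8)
The plan is to prove both assertions by the same contraction-mapping (discounting) scheme of Pardoux and Tang \cite{PT-1999}, carried out in the weighted spaces $L^2_{\mathcal F}(0,T;\mathcal{H})$ equipped with the norm $\|\cdot\|_\lambda$, the only genuinely new ingredient being the control of the conditional mean-field term $\mathbb{E}[X(\cdot)|\mathcal{F}^{W_0}_\cdot]$. First I would set up the solution map. Given a pair $(y,z)\in L^2_{\mathcal F}(0,T;\mathbb{R}^{l})\times L^2_{\mathcal F}(0,T;\mathbb{R}^{l\times(d+1)})$, I freeze it in the forward coefficients and solve the conditional McKean--Vlasov SDE
\[
dX(s)=b(s,X(s),\mathbb{E}[X(s)|\mathcal{F}^{W_0}_s],y(s),z(s))ds+\sigma(s,X(s),\mathbb{E}[X(s)|\mathcal{F}^{W_0}_s],y(s),z(s))dW(s),\quad X(0)=x;
\]
its well-posedness follows from the monotonicity $(H_1)(i)$ together with the linear-growth bound $(H_1)(iii)$, the Lipschitz property $(H_1)(vii)$ of $\sigma$, and the elementary contraction $\mathbb{E}|\mathbb{E}[\xi|\mathcal{F}^{W_0}_s]|^2\leq\mathbb{E}|\xi|^2$ used to absorb the mean-field argument. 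I then insert the resulting $X$ into the backward data and solve
\[
-dY(s)=f(s,X(s),\mathbb{E}[X(s)|\mathcal{F}^{W_0}_s],Y(s),Z(s))ds-Z(s)dW(s),\quad Y(T)=g(X(T),\mathbb{E}[X(T)|\mathcal{F}^{W_0}_T]),
\]
which is a BSDE with generator monotone in $y$ (by $(H_1)(iv)$) and Lipschitz in $z$ (by $(H_1)(v)$), hence uniquely solvable by the classical monotone-generator theory. This defines a map $\Gamma:(y,z)\mapsto(Y,Z)$ whose fixed points are exactly the adapted solutions of (\ref{General MF-FBSDE}).

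Next I would establish the two a priori difference estimates that make $\Gamma$ a contraction. Writing $\hat X=X_1-X_2$, $\hat Y=Y_1-Y_2$, and so on for the responses to two inputs $(y_1,z_1)$ and $(y_2,z_2)$, I apply It\^o's formula to $e^{-\lambda s}|\hat X(s)|^2$. Taking expectations, the drift is handled by the combined monotonicity of $b$ in $(x,m)$ — either directly via condition $(i')$ of Remark \ref{additional conditional}, or by combining $(H_1)(i)$ and $(H_1)(ii)$ with $\mathbb{E}|\mathbb{E}[\hat X|\mathcal{F}^{W_0}_s]|^2\leq\mathbb{E}|\hat X|^2$ — while the $\sigma$-term is controlled by $(H_1)(vii)$; the upshot is a bound of the form $\|\hat X\|_\lambda^2\leq C\,(k_2^2+k_3^2+k_9^2+k_{10}^2)\,\|(\hat y,\hat z)\|_\lambda^2$, in which the dependence on the input enters only through the four weak-coupling constants. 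Symmetrically, It\^o's formula applied to $e^{-\lambda s}|\hat Y(s)|^2$, using the $y$-monotonicity $(H_1)(iv)$, the Lipschitz bounds $(H_1)(v)$--$(vi)$, and the terminal estimate $(H_1)(viii)$, yields $\|(\hat Y,\hat Z)\|_\lambda^2\leq C'\,\|\hat X\|_\lambda^2$. Composing the two gives a contraction factor proportional to $k_2^2+k_3^2+k_9^2+k_{10}^2$.

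The choice of parameters then separates the two claims. For the first statement I may fix any admissible $\lambda$ and absorb the resulting (Gronwall-type) constants, which depend on $T$, into a single threshold; choosing $\delta_0$ so that the composite factor is strictly below $1$ whenever $k_2,k_3,k_9,k_{10}\in[0,\delta_0)$ makes $\Gamma$ a contraction, and Banach's fixed point theorem delivers the unique solution. For the $T$-independent statement, I would exploit the dissipativity inequality $2(\lambda_1+\lambda_2)<-2k_1-k_6^2-k_7^2-k_8^2$: it guarantees a nonempty range of $\lambda$ for which the negative drift produced in the two It\^o computations (whose aggregate coefficient is precisely $2\lambda_1+2\lambda_2+2k_1+k_6^2+k_7^2+k_8^2$ after using the conditional-expectation contraction) dominates $-\lambda$, so that both difference estimates close \emph{without} any $e^{CT}$ factor. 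The constants $C$ and $C'$ — and hence the threshold $\delta_1$ — then depend only on $k_i,\lambda_1,\lambda_2$, and the same fixed-point argument applies for arbitrary $T$.

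The main obstacle is exactly the conditional mean-field coupling $\mathbb{E}[X|\mathcal{F}^{W_0}]$: it rules out a pathwise treatment and forces every estimate to be taken in expectation, and, more delicately, the term $k_1|m_1-m_2|$ coming from $(H_1)(ii)$ must be reabsorbed into $|\hat X|^2$ through the Jensen-type bound so that the net forward drift coefficient stays controlled. Getting this reabsorption to produce exactly the coefficient $2\lambda_1+2\lambda_2+2k_1+k_6^2+k_7^2+k_8^2$, and then showing that the dissipativity hypothesis leaves room to select $\lambda$ killing all $T$-dependence, is the crux of the $T$-independent part; the weak-coupling smallness of $k_2,k_3,k_9,k_{10}$ is what finally turns the composite map into a strict contraction.
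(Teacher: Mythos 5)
Your proposal is correct and follows essentially the same route as the paper's appendix proof: the composition $\mathcal{M}_2\circ\mathcal{M}_1$ of the forward and backward solution maps, difference estimates from It\^o's formula applied to $e^{-\lambda s}|\hat X|^2$ and $e^{-\lambda s}|\hat Y|^2$ with the Jensen bound $\mathbb{E}|\mathbb{E}[\hat X|\mathcal{F}^{W_0}_s]|^2\leq\mathbb{E}|\hat X|^2$ absorbing the conditional mean-field term, and a choice of $\lambda$ making both discounted drift coefficients positive under the dissipativity condition so that the contraction constant is $T$-independent. The only detail worth adding is that the backward map also feeds on the terminal value $e^{-\lambda T}\mathbb{E}|\hat X(T)|^2$ through $g$, so the forward difference estimate must be recorded at $t=T$ as well as in the integrated $\|\cdot\|_\lambda$ norm, exactly as the paper does.
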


\begin{remark} \label{additional result}
If in additional $(H_1)-(i')$ holds (see Remark \ref{additional conditional}), by repeating the above discussion, one can show that if $2(\lambda_1+\lambda_2)<-2\widehat{k}_1-k_6^2-k_7^2-k_8^2$, there exists a $\delta_1>0$, which depends on $\widehat{k}_1,k_i,\lambda_1,\lambda_2$, for $i=4,5,6,7,8,11,12$  and is independent of  $T$, such that when $k_2,k_3,k_9,k_{10}\in[0,\delta_1)$, there exists a unique adapted solution $(X,Y,Z)$ to MF-FBSDE (\ref{General MF-FBSDE}).
\end{remark}
Now let us apply Theorem \ref{wellposedness MF-FBSDE PT} to obtain the well-posedness of consistency condition system (\ref{minor cc})-(\ref{major cc}) which is
\[
\left\{
    \begin{aligned}
      &d\alpha_k=\bigg( A_{k}\alpha_k\!+\!B\mathbf{P}_{\Gamma_{k}}\big[R_{k}^{-1}\big(B^{\prime}\beta_k
\!+\!D_{k}^{\prime}\gamma_k\big)\big]\!+\!F_1\sum_{i=1}^{K}\pi_i\mathbb{E}(\alpha_i(t)|\mathcal{F}_t^{W_0})
\!+\!b\bigg)dt\medskip\\
      &\!+\!\bigg(C\alpha_k+D_{k}
      \mathbf{P}_{\Gamma_{k}}\big[R_{k}^{-1}\big(B^{\prime}\beta_k+D_{k}^{\prime}\gamma_k\big)\big]
      \!+\!F_2\sum_{i=1}^{K}\pi_i\mathbb{E}(\alpha_i(t)|\mathcal{F}_t^{W_0})\!+\!H\alpha_0\!+\!\sigma\bigg)dW_k(t),
      \medskip\\
      &d\beta_k=\!-\!\big(A_{k}^{\prime}\beta_k\!-\!Q(\alpha_k\!-\!\rho\sum_{i=1}^{K}\pi_i\mathbb{E}
      (\alpha_i(t)|\mathcal{F}_t^{W_0})
      \!-\!(1\!-\!\rho)\alpha_0)
      \!+\!C^{\prime}\gamma_k\big)dt\!+\!\gamma_kdW_k(t)\!+\!\gamma_{k,0}dW_0(t),\medskip\\
     &\alpha_k(0)=x,\quad \quad \beta_k(T)=-G\big(\alpha_k(T)-\rho\sum_{i=1}^{K}\pi_i\mathbb{E}(\alpha_i(T)|\mathcal{F}_T^{W_0})-
     (1-\rho)\alpha_0(T)\big),
    \end{aligned}
    \right.
\]
where $\alpha_0$ satisfies the following FBSDE which is coupled with all $k$-type minor agents:
\[
\left\{
    \begin{aligned}
    &d\alpha_0=\bigg( A_0\alpha_0\!+\!B_0\mathbf{P}_{\Gamma_0}\big[R_0^{-1}\big(B^{\prime}_0\beta_0\!+\!D^{\prime}_0\gamma_0\big)\big]
    \!+\!F_0^1\sum_{i=1}^{K}\pi_i\mathbb{E}(\alpha_i(t)|\mathcal{F}_t^{W_0})\!+\!b_{0}\bigg)dt\medskip\\
      &\!+\!\bigg(C_0\alpha_0+D_0\mathbf{P}_{\Gamma_0}\big[R_0^{-1}\big(B^{\prime}_0\beta_0\!+\!D^{\prime}_0\gamma_0\big)\big]
      \!+\!F_0^2\sum_{i=1}^{K}\pi_i\mathbb{E}(\alpha_i(t)|\mathcal{F}_t^{W_0})\!+\!\sigma_0\bigg)dW_0(t),\medskip\\
      &d\beta_0=-\big(A^{\prime}_0\beta_0-Q_0(\alpha_0-\rho_0\sum_{i=1}^{K}\pi_i\mathbb{E}(\alpha_i(t)|\mathcal{F}_t^{W_0}))
      +C^{\prime}_0\gamma_0\big)dt+\gamma_0dW_0(t),\medskip\\
      &\alpha_0(0)=x_0,\quad \quad \beta_0(T)=-G_0\big(\alpha_0(T)-\rho_0\sum_{i=1}^{K}\pi_i\mathbb{E}(\alpha_i(T)|\mathcal{F}_T^{W_0})\big).
    \end{aligned}
    \right.
\]
Recall that
\[
\varphi_0(p,q)=
\mathbf{P}_{\Gamma_0}\big[R_0^{-1}\big(B_0^{\prime}p+D_0^{\prime}q\big)\big], \quad
\varphi_{k}(p,q)=\mathbf{P}_{\Gamma_{k}}\big[R_{k}^{-1}\big(B^{\prime}p
\!+\!D_{k}^{\prime}q\big)\big].
\]
If we denote
\[
\begin{aligned}
&W=(W_0,W_1,\ldots,W_K)^{'},\quad \Pi=(0,\pi_1,\ldots,\pi_K),\quad\alpha=(\alpha_0',\alpha_1',\ldots,\alpha_K')^{'},\quad
\beta=(\beta_0',\beta_1',\ldots,\beta_K')^{'},\medskip\\
&\mathbb{X}=(x_0',x',\ldots,x')^{'},\quad \mathbb{E}(\alpha(t)|\mathcal{F}_t^{W_0})
=(\mathbb{E}(\alpha_0(t)|\mathcal{F}_t^{W_0})',\mathbb{E}(\alpha_1(t)|\mathcal{F}_t^{W_0})',\ldots,
\mathbb{E}(\alpha_K(t)|\mathcal{F}_t^{W_0})')^{'},\medskip\\
&\Phi(\beta,\gamma)=(\varphi_0(\beta_0,\gamma_0),\varphi_1(\beta_1,\gamma_1)
,\ldots,\varphi_K(\beta_K,\gamma_K))^{'},\quad \rho_0^{\Pi}:=\Pi\otimes \rho I_{n\times n},
\quad \rho^{\Pi}:=\Pi\otimes \rho I_{n\times n},\medskip\\
& F_0^{1,\Pi}=\Pi\otimes F^1_0, \quad F_0^{2,\Pi}:=\Pi\otimes F^2_0,\quad
F^{1,\Pi}:=\Pi\otimes F_1, \quad F^{2,\Pi}:=\Pi\otimes F_2,\medskip\\
&\gamma=
\begin{pmatrix}
\gamma_{0} & 0 & \ldots & 0 \\
\gamma_{1,0} & \gamma_{1}  &\ldots & 0 \\
\vdots & \vdots & \ddots  &\vdots \\
\gamma_{K,0} & 0 & \ldots & \gamma_{K}
\end{pmatrix},\quad
\mathbb{B}_0=
\begin{pmatrix}
b_0 \\
b  \\
\vdots   \\
b
\end{pmatrix},\quad
\mathbb{D}_0=
\begin{pmatrix}
\sigma_0 & 0  &\ldots & 0 \\
0 & \sigma  &\ldots  & 0\\
\vdots & \vdots & \ddots & \vdots  \\
0 & 0  &\ldots & \sigma
\end{pmatrix},
\medskip\\
&\mathbb{A}=
\begin{pmatrix}
A_0 & 0  &\ldots & 0 \\
0 & A_{1}  &\ldots & 0 \\
\vdots & \vdots & \ddots & \vdots  \\
0 & 0  &\ldots & A_{K}
\end{pmatrix},\quad
\mathbb{B}=
\begin{pmatrix}
B_0 & 0  &\ldots & 0  \\
0 & B &\ldots & 0  \\
\vdots & \vdots & \ddots & \vdots  \\
0 & 0  &\ldots & B
\end{pmatrix},\quad
\mathbb{R}^{-1}=
\begin{pmatrix}
R_0 & 0  &\ldots & 0  \\
0 & R_1  &\ldots & 0 \\
\vdots & \vdots & \ddots & \vdots  \\
0 & 0  &\ldots & R_K
\end{pmatrix},\medskip\\
&\mathbb{Q}=
\begin{pmatrix}
Q_0 & 0  &\ldots & 0 \\
Q(1-\rho) & Q  &\ldots & 0 \\
\vdots & \vdots & \ddots & \vdots  \\
Q(1-\rho) & 0  &\ldots & Q
\end{pmatrix},
\mathbb{G}=
\begin{pmatrix}
G_0 & 0  &\ldots & 0 \\
G(1-\rho) & G  &\ldots & 0  \\
\vdots & \vdots & \ddots & \vdots  \\
G(1-\rho) & 0  &\ldots & G
\end{pmatrix},
\mathbb{F}_1^{\Pi}=
\begin{pmatrix}
F_0^{1,\Pi}   \\
F^{1,\Pi}  \\
\vdots   \\
 F^{1,\Pi}
\end{pmatrix},
\mathbb{F}_2^{\Pi}=
\begin{pmatrix}
F_0^{2,\Pi}   \\
F^{2,\Pi}  \\
\vdots   \\
 F^{2,\Pi}
\end{pmatrix},\quad
\medskip\\
&
\mathbb{Q}^{\Pi}=
\begin{pmatrix}
Q_0\rho_0^{\Pi}  \\
Q\rho^{\Pi}   \\
\vdots   \\
Q\rho^{\Pi}
\end{pmatrix},\quad
\mathbb{G}^{\Pi}=
\begin{pmatrix}
G_0\rho_0^{\Pi}  \\
G\rho^{\Pi}   \\
\vdots   \\
G\rho^{\Pi}
\end{pmatrix},\quad
\mathbb{H}=
\begin{pmatrix}
0  \\
H  \\
\vdots \\
 H
\end{pmatrix},\quad
\mathbb{H}(\alpha)=
\begin{pmatrix}
0 & 0  &\ldots & 0 \\
0 & H\alpha_0 &\ldots & 0  \\
\vdots & \vdots & \ddots & \vdots  \\
0 & 0  &\ldots & H\alpha_0
\end{pmatrix},
\medskip\\
&
\mathbb{F}_2^{\Pi}(\mathbb{E}(\alpha(t)|\mathcal{F}_t^{W_0}))=
\begin{pmatrix}
F_0^{2,\Pi}\mathbb{E}(\alpha(t)|\mathcal{F}_t^{W_0})& 0  &\ldots & 0   \\
0& F^{2,\Pi}\mathbb{E}(\alpha(t)|\mathcal{F}_t^{W_0})&\ldots  &  0    \\
\vdots & \vdots & \ddots & \vdots   \\
0& 0  &\ldots &F^{2,\Pi}\mathbb{E}(\alpha(t)|\mathcal{F}_t^{W_0})
\end{pmatrix},\medskip\\
\medskip\\
\end{aligned}
\]
\[
\begin{aligned}
&\mathbb{C}=
\begin{pmatrix}
C_0 & 0 &\ldots  & 0  \\
0 & C  &\ldots & 0 \\
\vdots & \vdots & \ddots & \vdots  \\
0 & 0  &\ldots & C
\end{pmatrix},\quad
\mathbb{C}(\alpha)=
\begin{pmatrix}
C_0\alpha_0 & 0  &\ldots & 0 \\
0 & C\alpha  &\ldots  & 0\\
\vdots & \vdots & \ddots & \vdots  \\
0 & 0  &\ldots & C\alpha
\end{pmatrix},\quad
\mathbb{C}(\gamma)=
\begin{pmatrix}
C_0'\gamma_0   \\
C'\gamma_1  \\
\vdots   \\
C'\gamma_K
\end{pmatrix},
\medskip\\
&\mathbb{D}=
\begin{pmatrix}
D_0& 0  &\ldots & 0 \\
0 & D_1  &\ldots & 0  \\
\vdots & \vdots & \ddots & \vdots  \\
0 & 0  &\ldots & D_K
\end{pmatrix},\quad
\mathbb{D}(\beta,\gamma)=
\begin{pmatrix}
D_0\varphi_0(\beta_0,\gamma_0) & 0  &\ldots & 0 \\
0 & D_1\varphi_1(\beta_1,\gamma_1) &\ldots & 0  \\
\vdots & \vdots & \ddots & \vdots  \\
0 & 0  &\ldots & D_K\varphi_K(\beta_K,\gamma_K)
\end{pmatrix}.\quad
\end{aligned}
\]
Using above notations, the system (\ref{minor cc})-(\ref{major cc}) can be written as
\begin{equation}\label{system simple}
\left\{
    \begin{aligned}
    d\alpha=&\bigg( \mathbb{A}\alpha+\mathbb{B}\Phi(\beta,\gamma)
    +\mathbb{F}_1^{\Pi}\mathbb{E}(\alpha(t)|\mathcal{F}_t^{W_0})+\mathbb{B}_{0}\bigg)dt\medskip\\
      &+\bigg(\mathbb{C}(\alpha)+\mathbb{D}(\beta,\gamma)
      +\mathbb{F}_{2}^{\Pi}\Big(\mathbb{E}(\alpha(t)|\mathcal{F}_t^{W_0})\Big)
      +\mathbb{H}(\alpha)+\mathbb{D}_0\bigg)dW(t),\medskip\\
      d\beta=&-\left(\mathbb{A}^{\prime}\beta-\mathbb{Q}\alpha+
      \mathbb{Q}^{\Pi}\mathbb{E}(\alpha(t)|\mathcal{F}_t^{W_0}))
      +\mathbb{C}(\gamma)
      \right)dt+\gamma dW(t),\medskip\\
      \alpha(0)=&\mathbb{X},\quad \quad \beta(T)=-\mathbb{G}\alpha(T)+\mathbb{G}^{\Pi}\mathbb{E}(\alpha(T)|\mathcal{F}_T^{W_0}).
    \end{aligned}
    \right.
\end{equation}
Now let $\lambda^{\ast}$ be the largest eigenvalue of the symmetric matrix $\frac{1}{2}(\mathbb{A}+\mathbb{A}')$. Recalling that the projection operator is Lipschitz continuous with Lipschitz constant $1$, then
by comparing (\ref{system simple}) with (\ref{General MF-FBSDE}), one can check that the coefficients of Assumption $(H_1)$  can be chosen as following
\[
\begin{aligned}
&\lambda_1=\lambda_2=\lambda^{\ast}, \quad k_0=\|\mathbb{A}\|,\quad k_1=\|\mathbb{F}_1^{\Pi}\|, \quad
k_2=k_3=\|\mathbb{R}^{-1}\|\|\mathbb{B}\|(\|\mathbb{B}\|+\|\mathbb{D}\|),\medskip\\
& k_4=\|\mathbb{Q}\|, \quad k_5=\|\mathbb{Q}^{\Pi}\|, \quad k_6=\|\mathbb{C}\|,\quad
k_7^2=4(\|\mathbb{C}\|+\|\mathbb{H}\|)^2, \quad k_8^2=4\|\mathbb{F}_2^{\Pi}\|^2, \medskip\\ &k_9=k_{10}=\|\mathbb{R}^{-1}\|\|\mathbb{D}\|(\|\mathbb{B}\|+\|\mathbb{D}\|),\quad k_{11}^2=2\|\mathbb{G}\|^2,\quad
k_{12}^2=2\|\mathbb{G}^{\Pi}\|^2.
\end{aligned}
\]
Thus by applying Theorem \ref{wellposedness MF-FBSDE PT}, we obtain the following global wellposedness of (\ref{system simple}).
\begin{theorem}\label{wellposedness MF-FBSDE main result 1}
Suppose that
\[
4\lambda^{\ast}<-2\|\mathbb{F}_1^{\Pi}\|-\|\mathbb{C}\|^2-4(\|\mathbb{C}\|+\|\mathbb{H}\|)^2-4\|\mathbb{F}_2^{\Pi}\|^2,
\]
then there exists a $\delta_1>0$, which depends on $\lambda^{\ast},\|\mathbb{F}_1^{\Pi}\|,
\|\mathbb{Q}\|, \|\mathbb{Q}^{\Pi}\|, \|\mathbb{C}\|,
\|\mathbb{H}\|, \|\mathbb{F}_2^{\Pi}\|, \|\mathbb{G}\|,
\|\mathbb{G}^{\Pi}\|$,  and is independent of  $T$, such that when $\|\mathbb{R}^{-1}\|, \|\mathbb{B}\|,\|\mathbb{D}\|\in[0,\delta_1)$, there exists a unique adapted solution $(\alpha,\beta,\gamma)$ to consistency condition system (\ref{minor cc})-(\ref{major cc}).
\end{theorem}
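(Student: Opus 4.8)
The plan is to recognize that, in the block notation just introduced, the consistency condition system written in the compact form (\ref{system simple}) is precisely a special instance of the general conditional MF-FBSDE (\ref{General MF-FBSDE}), and then to check that its coefficients satisfy assumptions $(H_1)$ and $(H_2)$ with exactly the constants $\lambda_1=\lambda_2=\lambda^\ast$, $k_1=\|\mathbb{F}_1^{\Pi}\|,\ldots$ listed above the statement. Once this verification is complete, the conclusion follows by a direct application of Theorem \ref{wellposedness MF-FBSDE PT}. Concretely, first I would read off $b,\sigma,f,g$ from (\ref{system simple}): the forward drift is $\mathbb{A}\alpha+\mathbb{B}\Phi(\beta,\gamma)+\mathbb{F}_1^{\Pi}\mathbb{E}(\alpha\,|\,\mathcal{F}^{W_0})+\mathbb{B}_0$, the diffusion is $\mathbb{C}(\alpha)+\mathbb{D}(\beta,\gamma)+\mathbb{F}_2^{\Pi}(\mathbb{E}(\alpha\,|\,\mathcal{F}^{W_0}))+\mathbb{H}(\alpha)+\mathbb{D}_0$, the backward generator is $\mathbb{A}'\beta-\mathbb{Q}\alpha+\mathbb{Q}^{\Pi}\mathbb{E}(\alpha\,|\,\mathcal{F}^{W_0})+\mathbb{C}(\gamma)$, and the terminal coefficient is $g=-\mathbb{G}\alpha(T)+\mathbb{G}^{\Pi}\mathbb{E}(\alpha(T)\,|\,\mathcal{F}^{W_0}_T)$, with $X=\alpha$, $Y=\beta$, $Z=\gamma$.

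To verify $(H_1)$ I would note that every coefficient in (\ref{system simple}) is affine in its arguments apart from the projection maps $\varphi_0,\varphi_k$, which are $1$-Lipschitz by Proposition 4.2 of \cite{HHL2016}; consequently the Lipschitz bounds $(H_1)(ii),(iii),(v),(vi),(vii),(viii)$ all hold, the constants $k_i$ being obtained by bounding the individual blocks of the drift, diffusion and terminal coefficient (the state-dependent blocks $\mathbb{C}(\alpha)$ and $\mathbb{H}(\alpha)$ being grouped to produce the $(\|\mathbb{C}\|+\|\mathbb{H}\|)^2$ in $k_7^2$, and the mean-field block $\mathbb{F}_2^{\Pi}$ producing $k_8^2$). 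The one-sided bounds $(H_1)(i)$ and $(H_1)(iv)$ hold with $\lambda_1=\lambda_2=\lambda^\ast$, since only the symmetric part $\frac12(\mathbb{A}+\mathbb{A}')$ of the linear drifts $\mathbb{A}\alpha$ and $\mathbb{A}'\beta$ contributes to $\langle\mathbb{A}(\alpha_1-\alpha_2),\alpha_1-\alpha_2\rangle$ and to the analogous term in $\beta$. The decisive structural feature is that the forward-backward coupling constants $k_2,k_3$ (entering through $\mathbb{B}\Phi(\beta,\gamma)$) and $k_9,k_{10}$ (entering through $\mathbb{D}(\beta,\gamma)$) are all controlled by the three norms $\|\mathbb{R}^{-1}\|,\|\mathbb{B}\|,\|\mathbb{D}\|$ and vanish as these do. Assumption $(H_2)$ is immediate from the boundedness of the coefficients guaranteed by (A2)-(A3) together with the inhomogeneous terms $\mathbb{B}_0,\mathbb{D}_0$.

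With these identifications the global hypothesis $2(\lambda_1+\lambda_2)<-2k_1-k_6^2-k_7^2-k_8^2$ of Theorem \ref{wellposedness MF-FBSDE PT} reads exactly $4\lambda^\ast<-2\|\mathbb{F}_1^{\Pi}\|-\|\mathbb{C}\|^2-4(\|\mathbb{C}\|+\|\mathbb{H}\|)^2-4\|\mathbb{F}_2^{\Pi}\|^2$, which is the standing assumption of the present theorem. The second part of Theorem \ref{wellposedness MF-FBSDE PT} then furnishes a $T$-independent threshold below which $k_2,k_3,k_9,k_{10}$ must lie to guarantee a unique adapted solution. Since each of these four constants is dominated by $\|\mathbb{R}^{-1}\|,\|\mathbb{B}\|,\|\mathbb{D}\|$, I would take $\delta_1$ small enough that $\|\mathbb{R}^{-1}\|,\|\mathbb{B}\|,\|\mathbb{D}\|\in[0,\delta_1)$ forces $k_2,k_3,k_9,k_{10}$ under that threshold; the resulting $\delta_1$ depends only on the norms listed in the statement and is independent of $T$, and the solvability of (\ref{system simple}), hence of (\ref{minor cc})-(\ref{major cc}), follows.

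The main obstacle is not the bookkeeping of constants but ensuring that the conditional mean-field terms $\mathbb{E}(\alpha\,|\,\mathcal{F}^{W_0})$ do not corrupt the one-sided estimates; this is where the elementary inequality $\mathbb{E}|\mathbb{E}[\alpha\,|\,\mathcal{F}^{W_0}]|^2\le\mathbb{E}|\alpha|^2$ (already used in the local proof) is essential, so that the mean-field Lipschitz constant $k_1=\|\mathbb{F}_1^{\Pi}\|$ enters the monotonicity budget only through the term $-2k_1$. A secondary subtlety is that this system fails the classical monotonicity condition of \cite{HP1995} because of the cross-coupling in $\mathbb{Q}$ and the major-to-minor channel $\mathbb{H}$; it is precisely to absorb these effects that the diffusion Lipschitz constants are enlarged to $k_7^2=4(\|\mathbb{C}\|+\|\mathbb{H}\|)^2$ and $k_8^2=4\|\mathbb{F}_2^{\Pi}\|^2$, and it is the discounting argument of Theorem \ref{wellposedness MF-FBSDE PT}, rather than a contraction on a fixed horizon as in Theorem \ref{wellposedness theorem}, that delivers the $T$-independence.
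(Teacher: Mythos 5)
Your proposal is correct and follows exactly the route the paper takes: identify (\ref{system simple}) as an instance of (\ref{General MF-FBSDE}), read off the constants of $(H_1)$ (with $\lambda_1=\lambda_2=\lambda^\ast$, the $1$-Lipschitz projection giving the $k_2,k_3,k_9,k_{10}$ controlled by $\|\mathbb{R}^{-1}\|,\|\mathbb{B}\|,\|\mathbb{D}\|$, and the grouped diffusion blocks giving $k_7^2=4(\|\mathbb{C}\|+\|\mathbb{H}\|)^2$, $k_8^2=4\|\mathbb{F}_2^{\Pi}\|^2$), and invoke the second part of Theorem \ref{wellposedness MF-FBSDE PT}. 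The paper states this identification without further elaboration, so your write-up is simply a more detailed version of the same argument.
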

\begin{remark} \label{wellposedness MF-FBSDE main result 2}
Let $\lambda^{\ast}_{\mathbb{F}_1^{\Pi}}$ be the largest eigenvalue of $\frac{1}{2}(\mathbb{F}_1^{\Pi}+(\mathbb{F}_1^{\Pi})')$. Noticing  Remark \ref{additional conditional}, one can check that
$(H_1)-(i')$ holds with $\widehat{k}_1=\lambda^{\ast}_{\mathbb{F}_1^{\Pi}}$. Thus, from Remark \ref{additional result}, we have that if
\[
4\lambda^{\ast}<-2\lambda^{\ast}_{\mathbb{F}_1^{\Pi}}-\|\mathbb{C}\|^2-4(\|\mathbb{C}\|+\|\mathbb{H}\|)^2-4\|\mathbb{F}_2^{\Pi}\|^2,
\]
then there exists a $\delta_1>0$, which depends on $\lambda^{\ast},\lambda^{\ast}_{\mathbb{F}_1^{\Pi}},
\|\mathbb{Q}\|, \|\mathbb{Q}^{\Pi}\|, \|\mathbb{C}\|,
\|\mathbb{H}\|, \|\mathbb{F}_2^{\Pi}\|, \|\mathbb{G}\|,
\|\mathbb{G}^{\Pi}\|$,  and is independent of  $T$, such that when $\|\mathbb{R}^{-1}\|, \|\mathbb{B}\|,\|\mathbb{D}\|\in[0,\delta_1)$, there exists a unique adapted solution $(\alpha,\beta,\gamma)$ to consistency condition system (\ref{minor cc})-(\ref{major cc}).
\end{remark}

\section{$\varepsilon$-Nash Equilibrium for Problem (\textbf{CC})}
In Section 2, we characterized the decentralized strategies $\{\bar{u}^i_t, 0\le i\le N\}$ of \textbf{Problem (CC)}
through the auxiliary \textbf{Problem (LCC)} and consistency condition system. Now, we turn to verify the $\varepsilon$-Nash equilibrium of these decentralized strategies. Here, we proceed our verification based on the assumptions of local time horizon case (Section 4). Note that it can also be verified based on global horizon case (Section 5) without essential difficulties. For major agent $\mathcal{A}_0$ and minor agent $\mathcal{A}_i$,
the decentralized states $\breve{x}_t^{0}$ and $\breve{x}_t^{i}$ are given respectively by
\begin{equation}\label{decentralized state}
\left\{
        \begin{aligned}d\breve{x}_{0}&=\left[A_0\breve{x}_{0}\!+\!B_0\varphi_0(\bar{p}_0,\bar{q}_0)\!+\!F_0^1\breve{x}^{(N)}\!+\!b_0\right]dt
        \!+\!\left[C_0\breve{x}_{0}\!+\!D_0\varphi_0(\bar{p}_0,\bar{q}_0)
      +F_0^2\breve{x}^{(N)}\!+\!\sigma_0\right]dW_0(t),\\
      d\breve{x}_{i}&=\left[A_{\theta_i}\breve{x}_{i}\!+\!B\varphi_{\theta_i}(\bar{p}_i,\bar{q}_i)
      \!+\!F_1\breve{x}^{(N)}
      \!+\!b\right]dt\!+\!\left[C\breve{x}_{i}\!+\!D_{\theta_i}\varphi_{\theta_i}(\bar{p}_i,\bar{q}_i)
      +F_2\breve{x}^{(N)}\!+\!H\breve{x}_0\!+\!\sigma\right]dW_i(t),\\
      \breve{x}_{0}(0)&=x_0, \quad \breve{x}_{i}(0)=x,
    \end{aligned}
    \right.
\end{equation}
where $\breve{x}^{(N)}=\frac{1}{N}\sum_{i=1}^{N}\breve{x}^{i}$ and the processes $(\bar{p}_0,\bar{q}_0,\bar{p}_i,\bar{q}_i)$ are solved by
\begin{equation}\label{limiting state under cc condition}
\left\{
    \begin{aligned}
    &d\bar{x}_0=\bigg( A_0\bar{x}_0\!+\!B_0\varphi_0(\bar{p}_0,\bar{q}_0)
    \!+\!F_0^1\sum_{k=1}^{K}\pi_k\mathbb{E}(\alpha_k(t)|\mathcal{F}_t^{W_0})\!+\!b_{0}\bigg)dt\medskip\\
      &\qquad\!+\!\bigg(C_0\bar{x}_0+D_0\varphi_0(\bar{p}_0,q\bar{}_0)
      \!+\!F_0^2\sum_{k=1}^{K}\pi_k\mathbb{E}(\alpha_k(t)|\mathcal{F}_t^{W_0})\!+\!\sigma_0\bigg)dW_0(t),\medskip\\
      &d\bar{p}_0=-\big(A^{\prime}\bar{p}_0
      -Q_0(\bar{x}_0-\rho_0\sum_{i=1}^{K}\pi_k\mathbb{E}(\alpha_k(t)|\mathcal{F}_t^{W_0}))
      +C^{\prime}\bar{q}_0\big)dt+\bar{q}_0dW_0(t),\medskip\\
      &d\bar{x}_i=\bigg( A_{\theta_i}\bar{x}_i\!+\!B\varphi_{\theta_i}(\bar{p}_i,\bar{q}_i)
      \!+\!F_1\sum_{k=1}^{K}\pi_k\mathbb{E}(\alpha_k(t)|\mathcal{F}_t^{W_0})
      \!+\!b\bigg)dt\medskip\\
      &\qquad\!+\!\bigg(C\bar{x}_i+D_{\theta_i}\varphi_{\theta_i}(\bar{p}_i,\bar{q}_i)
      \!+\!F_2\sum_{k=1}^{K}\pi_k\mathbb{E}(\alpha_k(t)|\mathcal{F}_t^{W_0})\!+\!H\bar{x}_0\!+\!\sigma\bigg)dW_i(t),\medskip\\
      &d\bar{p}_i=\!-\!\big(A_{\theta_i}^{\prime}\bar{p}_i
      \!-\!Q(\bar{x}_i\!-\!\rho\sum_{k=1}^{K}\pi_k\mathbb{E}(\alpha_k(t)|\mathcal{F}_t^{W_0})\!-\!(1\!-\!\rho)\bar{x}_0)
      \!+\!C^{\prime}\bar{q}_i\big)dt\!+\!\bar{q}_idW_i(t)\!+\!\bar{q}_{i,0}dW_0(t),\medskip\\
        &\bar{x}_0(0)=x_0,\quad \quad \bar{p}_0(T)=-G_0\big(\bar{x}_0(T)-\rho_0\sum_{k=1}^{K}\pi_k\mathbb{E}(\alpha_k(T)|\mathcal{F}_T^{W_0})\big),\\
     &\bar{x}_i(0)=x,\quad \quad \bar{p}_i(T)=-G\big(\bar{x}_i(T)-\rho\sum_{k=1}^{K}\pi_k\mathbb{E}(\alpha_k(T)|\mathcal{F}_T^{W_0})
     -(1-\rho)\bar{x}_0(T)\big).
    \end{aligned}
    \right.
\end{equation}
Here we recall that
\[
\varphi_0(p,q):=
\mathbf{P}_{\Gamma_0}\big[R_0^{-1}\big(B^{\prime}_0p+D^{\prime}_0q\big)\big], \quad
\varphi_{\theta_i}(p,q):=\mathbf{P}_{\Gamma_{\theta_i}}\big[R_{\theta_i}^{-1}\big(B^{\prime}p
\!+\!D_{\theta_i}^{\prime}q\big)\big],
\]
and $\alpha_k$, $1\leq k\leq K$ are given by (\ref{minor cc}) and (\ref{major cc}). We mention that (\ref{limiting state under cc condition}) gives also the dynamics of the limiting state $(\bar{x}_0,\bar{x}_{1},\ldots,\bar{x}_{N})$ and one can check easily that $(\bar{x}_0,\bar{p}_0,\bar{q}_0)=(\alpha_0,\beta_0,\gamma_0)$.
Now, we would like to show that for $\bar{u}_0=\varphi_{0}(\bar{p}_0,\bar{q}_0)$ and  $\bar{u}_i=\varphi_{\theta_i}(\bar{p}_i,\bar{q}_i)$, $1\leq i\leq N$,
$(\bar{u}_0,\bar{u}_1,\cdots,\bar{u}_N)$ is an $\varepsilon$-Nash equilibrium of \textbf{Problem (CC)}.
Let us first present following several lemmas.
\begin{lemma}\label{lemma for xN}
Under  \emph{\textbf{(A1)-(A4)}}, there exists a constant $M$ independent of $N$, such that
\[
\sup_{0\leq i\leq N}\mathbb{E}\sup_{0\leq t\leq T}\Big|\breve{x}^{i}(t)\Big|^2\leq M.
\]
\end{lemma}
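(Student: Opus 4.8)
The plan is to derive uniform-in-$N$ a priori moment estimates for the coupled decentralized system (\ref{decentralized state}) in two stages: first control the major state together with the \emph{averaged} second moment of the minor states, then bootstrap to the individual estimate. First I would record that the control processes entering (\ref{decentralized state}) have $L^2$-norms bounded uniformly in $i$ and $N$. Indeed, $\bar u_0=\varphi_0(\bar p_0,\bar q_0)$ and $\bar u_i=\varphi_{\theta_i}(\bar p_i,\bar q_i)$ are projections, and since $\mathbf{P}_{\Gamma_{\theta_i}}$ is $1$-Lipschitz (and the $\|\cdot\|_{R_{\theta_i}}$- and Euclidean norms are uniformly equivalent by \textbf{(A3)}--\textbf{(A4)}) we have $|\varphi_{\theta_i}(\bar p_i,\bar q_i)|\le C\|R_{\theta_i}^{-1}\|(\|B\|\,|\bar p_i|+\|D_{\theta_i}\|\,|\bar q_i|)+|\mathbf{P}_{\Gamma_{\theta_i}}[0]|$. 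Because there are only $K$ types and, for $i$ of type $k$, $(\bar p_i,\bar q_i)$ has the same law as $(\beta_k,\gamma_k)$ solving (\ref{minor cc})--(\ref{major cc}), Theorem \ref{wellposedness theorem} yields $\mathbb{E}\int_0^T(|\bar p_i|^2+|\bar q_i|^2)\,dt\le \max_{1\le k\le K}C_k=:C_\ast<\infty$, hence $\sup_{0\le i\le N}\mathbb{E}\int_0^T|\bar u_i|^2\,dt\le M_\ast$ with $M_\ast$ independent of $N$.

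For the coupled estimate I would apply It\^o's formula to $|\breve x_0|^2$ and to each $|\breve x_i|^2$, use the uniform boundedness of the coefficients from \textbf{(A2)}, the Burkholder--Davis--Gundy inequality for the martingale parts, Young's inequality, and the elementary bound $|\breve x^{(N)}(t)|^2\le \frac1N\sum_{j=1}^N|\breve x_j(t)|^2$. This gives, for every $t\in[0,T]$,
\[
\mathbb{E}\sup_{s\le t}|\breve x_0(s)|^2\le C\Big(1+M_\ast+\mathbb{E}\int_0^t|\breve x_0|^2\,ds+\frac1N\sum_{j=1}^N\mathbb{E}\int_0^t|\breve x_j|^2\,ds\Big),
\]
together with the analogous inequality for $\frac1N\sum_{i=1}^N\mathbb{E}\sup_{s\le t}|\breve x_i(s)|^2$, in which the coupling terms $F_1\breve x^{(N)}$, $F_2\breve x^{(N)}$ and $H\breve x_0$ are again controlled by $\frac1N\sum_j\mathbb{E}\sup|\breve x_j|^2$ and by $\mathbb{E}\sup|\breve x_0|^2$. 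Setting $\Psi(t):=\mathbb{E}\sup_{s\le t}|\breve x_0(s)|^2+\frac1N\sum_{i=1}^N\mathbb{E}\sup_{s\le t}|\breve x_i(s)|^2$ and adding, the constants depend only on the $L^\infty$-bounds of the coefficients and on $M_\ast$, so that $\Psi(t)\le C\big(1+\int_0^t\Psi(s)\,ds\big)$, and Gronwall's inequality gives $\Psi(T)\le M$ with $M$ independent of $N$.

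Finally I would bootstrap. From $\Psi(T)\le M$ one gets $\mathbb{E}\sup_{0\le t\le T}|\breve x^{(N)}(t)|^2\le \frac1N\sum_{j=1}^N\mathbb{E}\sup_t|\breve x_j(t)|^2\le M$ and $\mathbb{E}\sup_t|\breve x_0(t)|^2\le M$. For a fixed $i$ the equation for $\breve x_i$ in (\ref{decentralized state}) is then a linear SDE whose inhomogeneous terms $F_1\breve x^{(N)}$, $H\breve x_0$ and $B\varphi_{\theta_i}(\bar p_i,\bar q_i)$ are already bounded in $L^2$ uniformly in $i$ and $N$; applying It\^o, BDG and Gronwall once more yields $\mathbb{E}\sup_{0\le t\le T}|\breve x_i(t)|^2\le M'$ with $M'$ independent of $i$ and $N$, and taking the supremum over $0\le i\le N$ completes the proof. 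The main obstacle is the heterogeneity of the minor agents: since $A_{\theta_i},D_{\theta_i},R_{\theta_i}$ depend on the type, the average $\frac1N\sum_iA_{\theta_i}\breve x_i$ cannot be rewritten through $\breve x^{(N)}$, so one must run the estimate on the averaged second moment $\frac1N\sum_i\mathbb{E}\sup|\breve x_i|^2$ coupled with $\breve x_0$ rather than on a closed equation for $\breve x^{(N)}$, and one must verify that the control bound $M_\ast$ is uniform, which is precisely where the finiteness of the type set and Theorem \ref{wellposedness theorem} enter.
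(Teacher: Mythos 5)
Your proposal is correct and follows essentially the same route as the paper: both first secure a uniform-in-$N$ $L^2$ bound on the decentralized controls $\varphi_{\theta_i}(\bar p_i,\bar q_i)$ from the well-posedness of the consistency system and the $1$-Lipschitz projection, then close a Gronwall estimate on the aggregate quantity (the paper uses $\sum_{i}\mathbb{E}\sup|\breve x_i|^2=O(N)$, you use its average coupled with $\mathbb{E}\sup|\breve x_0|^2$, which is the same estimate), and finally bootstrap to the individual bound by re-running Gronwall on each $\breve x_i$ with the now-controlled inhomogeneous terms. The only cosmetic difference is that the paper eliminates $\breve x_0$ first by a separate Gronwall step before summing over the minors, whereas you carry it inside the combined functional $\Psi$.
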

\begin{proof}
From  Theorem \ref{wellposedness theorem}, we know that on a small time interval the system of fully coupled FBSDE (\ref{minor cc})-(\ref{major cc}) has a unique solution (for the global case, see Theorem \ref{wellposedness MF-FBSDE main result 1} and Remark \ref{wellposedness MF-FBSDE main result 2})
\[
(\alpha_0,\beta_0,\gamma_0)\in L^{2}_{\mathcal{F}^{W_0}}(0,T;\mathbb{R}^{n\times 3}) \text{ and }
(\alpha_k,\beta_k,\gamma_k,\gamma_{k,0})\in L^{2}_{\mathcal{F}^k}(0,T;\mathbb{R}^{n\times 4}), \quad 1\leq k\leq K.
\]
Then, the classical results on FBSDEs yields that (\ref{limiting state under cc condition}) also has a unique solution
\[
(\bar{x}_0,\bar{p}_0,\bar{q}_0)\in L^{2}_{\mathcal{F}^{W_0}}(0,T;\mathbb{R}^{n\times 3}) \text{ and }
(\bar{x}_i,\bar{p}_i,\bar{q}_i,\bar{q}_{i,0})\in L^{2}_{\mathcal{F}^i}(0,T;\mathbb{R}^{n\times 4}), \quad 1\leq i\leq N.
\]
(Indeed, FBSDEs (\ref{limiting state under cc condition}) has a unique solution for arbitrary $T$ by using the Theorem 2.1 of \cite{HHL2016} and the results of \cite{HP1995,PW1999}).
Thus, SDEs system (\ref{decentralized state})
has also a unique solution
\[
(\breve{x}_0,\breve{x}_1\ldots,\breve{x}_N)\in L^{2}_{\mathcal{F}}(0,T;\mathbb{R}^n)\times L^{2}_{\mathcal{F}}(0,T;\mathbb{R}^n)\times\ldots\times L^{2}_{\mathcal{F}}(0,T;\mathbb{R}^n).
\]
Moreover, since $\{W_i\}_{i=1}^N$ is $N$-dimensional Brownian motion whose components are independent identically distributed, we have that for the $k$-type minor agents, under the conditional expectation $\mathbb{E}(\cdot|\mathcal{F}^{W_0}))$, for each $1\leq k\leq K$, the processes $(\bar{x}_i,\bar{p}_i,\bar{q}_i)$, $i\in\mathcal{I}_k$, are independent identically distributed. We also note that for each $1\leq k\leq K$,  $\breve{x}_i$,  $i\in\mathcal{I}_k$, are identically distributed. Noticing that $(\bar{p}_0,\bar{q}_0)\in L^{2}_{\mathcal{F}^{W_0}}(0,T;\mathbb{R}^n)\times L^{2}_{\mathcal{F}^{W_0}}(0,T;\mathbb{R}^n)$, and $(\bar{p}_i,\bar{q}_i)\in L^{2}_{\mathcal{F}^i}(0,T;\mathbb{R}^n)\times L^{2}_{\mathcal{F}^i}(0,T;\mathbb{R}^n)$, $1\leq i\leq N$, then the Lipschitz property of the projection onto the convex set
yields that  $\varphi_{\theta_0}(\bar{p}_0,\bar{q}_0):=\varphi_{0}(\bar{p}_0,\bar{q}_0)=\mathbf{P}_{\Gamma_0}\Big(R_{0}^{-1}
\big(B^T\bar{p}_0+D^T\bar{q}_0\big)\Big)\in L^{2}_{\mathcal{F}^{W_0}}(0,T;\Gamma_0)$ and $\varphi_{\theta_i}(\bar{p}_i,\bar{q}_i):=\mathbf{P}_{\Gamma_{\theta_i}}\Big(R_{\theta_i}^{-1}
\big(B^T\bar{p}_i+D^T\bar{q}_i\big)\Big)\in L^{2}_{\mathcal{F}^i}(0,T;\Gamma_{\theta_i})$, $1\leq i\leq N$. Moreover, there exists a constant $M$ independent of $N$ which may very line by line in the following, such that for all $0\leq i\leq N$, $0\leq k\leq K$,
\begin{equation}\label{bounded}
\begin{aligned}
 &\mathbb{E}\sup_{0\leq t\leq T}(|\alpha_k(t)|^2+|\beta_k(t)|^2+|\bar{x}_i(t)|^2+|\bar{p}_i(t)|^2)\\
 &\qquad\qquad+\mathbb{E}\int_0^T(|\gamma_k(t)|^2+|\bar{q}_i(t)|^2+|\varphi_{\theta_i}(\bar{p}_i(t),\bar{q}_i(t))|^2)
 \leq M.
\end{aligned}
\end{equation}
From (\ref{decentralized state}),  by using Burkholder-Davis-Gundy (BDG) inequality,
it follows that for any $t\in[0,T]$,
\[
  \begin{aligned}
\mathbb{E}\sup_{0\leq s\leq t}|\breve{x}_{0}(s)|^2\leq & M+M\mathbb{E}\int_0^t\Big[|\breve{x}_{0}(s)|^2\!+\!|\breve{x}^{(N)}(s)|^2\Big]ds
\leq  M+M\mathbb{E}\int_0^t\Big[|\breve{x}_{0}(s)|^2\!+\!\frac{1}{N}\sum_{i=1}^N|\breve{x}_{i}(s)|^2\Big]ds,
\end{aligned}
\]
and by Gronwall's inequality, we obtain
\begin{equation}\label{ee80}
\mathbb{E}\sup_{0\leq s\leq t}|\breve{x}_{0}(s)|^2
\leq  M+M\mathbb{E}\int_0^t\frac{1}{N}\sum_{i=1}^N|\breve{x}_{i}(s)|^2ds,
\end{equation}
Similarly, from (\ref{decentralized state}) again, we have
\[
  \begin{aligned}
\mathbb{E}\sup_{0\leq s\leq t}|\breve{x}_{i}(s)|^2
\leq & M+M\mathbb{E}\int_0^t\Big[|\breve{x}_{0}(s)|^2\!+\!|\breve{x}_{i}(s)|^2
\!+\!\frac{1}{N}\sum_{i=1}^N|\breve{x}_{i}(s)|^2\Big]ds.
\end{aligned}
\]
Then using (\ref{ee80}), we get
\begin{equation}\label{ee8}
  \begin{aligned}
\mathbb{E}\sup_{0\leq s\leq t}|\breve{x}_{i}(s)|^2
\leq & M+M\mathbb{E}\int_0^t\Big[|\breve{x}_{i}(s)|^2\!+\!\frac{1}{N}\sum_{i=1}^N|\breve{x}_{i}(s)|^2\Big]ds.
\end{aligned}
\end{equation}
Thus
\[
\mathbb{E}\sup_{0\leq s\leq t}\sum_{i=1}^N|\breve{x}_{i}(s)|^2\leq \mathbb{E}\sum_{i=1}^N\sup_{0\leq s\leq t}|\breve{x}_{i}(s)|^2
\leq MN+2M\mathbb{E}\int_0^t\Big[\sum_{i=1}^N|\breve{x}_{i}(s)|^2\Big]ds.
\]
By Gronwall's inequality, it is easy to obtain
$\mathbb{E}\sup\limits_{0\leq t\leq T}\sum_{i=1}^N|\breve{x}_{i}(t)|^2=O(N)$, for any $1\leq i\leq N$.
Then, substituting this estimate to (\ref{ee8}) and using Gronwall's inequality once again, we have
$
\mathbb{E}\sup_{0\leq t\leq T}\Big|\breve{x}_{i}(t)\Big|^2\leq M, \text{ for any } 1\leq i\leq N.
$
By applying this estimate to (\ref{ee80}), we get
$\mathbb{E}\sup\limits_{0\leq t\leq T}\Big|\breve{x}_{0}(t)\Big|^2\leq M.$
We complete the proof.
\end{proof}
\bigskip

Now, we recall that
\[
\breve{x}^{(N)}=\frac{1}{N}\sum_{k=1}^{K}\sum_{i\in\mathcal{I}_k}\breve{x}_i
=\sum_{k=1}^{K}\pi_k^{(N)}\frac{1}{N_k}\sum_{i\in\mathcal{I}_k}\breve{x}_i, \quad \text{ and } \quad
\Phi(t)=\sum_{k=1}^{K}\pi_k\mathbb{E}(\alpha_k(t)|\mathcal{F}_t^{W_0}),
\]
then we have
\begin{lemma}\label{lemma for xN and m}
Under \emph{\textbf{(A1)-(A4)}}, there exists a constant $M$ independent of $N$ such that
\[
\mathbb{E}\sup_{0\leq t\leq T}\Big|\breve{x}^{(N)}(t)-\Phi(t)\Big|^2\leq M\Big(\frac{1}{N}+\varepsilon_N^2\Big), \text { where } \varepsilon_N=\sup\limits_{1\leq k\leq K}|\pi_k^{(N)}-\pi_k|.
\]
\end{lemma}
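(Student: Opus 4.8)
The plan is to compare the true empirical average $\breve{x}^{(N)}$ with its frozen limit $\Phi$ by inserting the average of the limiting states as an intermediate object. Writing $\bar{x}^{(N)}:=\frac{1}{N}\sum_{i=1}^{N}\bar{x}_i$, where $\bar{x}_i$ solves the limiting system (\ref{limiting state under cc condition}), I would split
\[
\breve{x}^{(N)}-\Phi=\big(\breve{x}^{(N)}-\bar{x}^{(N)}\big)+\big(\bar{x}^{(N)}-\Phi\big)
\]
and estimate the two brackets separately: the first measures the effect of replacing the coupling $\Phi$ by the genuine average $\breve{x}^{(N)}$ in the dynamics, while the second is a law-of-large-numbers term. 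For the second bracket I would exploit that for each type $k$ the processes $(\bar{x}_i)_{i\in\mathcal{I}_k}$ solve identical equations driven by the i.i.d.\ motions $W_i$, all independent of $W_0$, whereas the coupling $\Phi$ and the major state $\bar{x}_0=\alpha_0$ are $\mathcal{F}^{W_0}$-adapted; hence, conditionally on $\mathcal{F}_t^{W_0}$, the $\bar{x}_i(t)$, $i\in\mathcal{I}_k$, are i.i.d.\ with conditional mean $m_k(t)=\mathbb{E}(\alpha_k(t)|\mathcal{F}_t^{W_0})$. Decomposing
\[
\bar{x}^{(N)}-\Phi=\sum_{k=1}^{K}\pi_k^{(N)}\Big(\tfrac{1}{N_k}\sum_{i\in\mathcal{I}_k}\bar{x}_i-m_k\Big)+\sum_{k=1}^{K}(\pi_k^{(N)}-\pi_k)m_k,
\]
the second sum is bounded by $M\varepsilon_N^2$ using $\mathbb{E}\sup_t|m_k(t)|^2\le M$ (a consequence of the dynamics of $m_k$ obtained by conditioning the $\alpha_k$-equation, together with the a priori bound (\ref{bounded})), while each centred within-type average in the first sum is of order $1/N_k$. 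Since \textbf{(A1)} gives $N_k\ge cN$ for $N$ large, this will yield $\mathbb{E}\sup_t|\bar{x}^{(N)}-\Phi|^2\le M(\frac1N+\varepsilon_N^2)$.

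The delicate point, which I expect to be the main obstacle, is obtaining the within-type bound \emph{uniformly in time}, that is with $\sup_t$ inside the expectation, since a fixed-time conditional variance computation only yields $\mathbb{E}|\tfrac{1}{N_k}\sum_{i\in\mathcal{I}_k}\bar{x}_i(t)-m_k(t)|^2\le M/N_k$ pointwise in $t$. To upgrade this I would write the SDE satisfied by $\eta_k:=\tfrac{1}{N_k}\sum_{i\in\mathcal{I}_k}\bar{x}_i-m_k$, obtaining the equation for $m_k$ by applying $\mathbb{E}(\cdot|\mathcal{F}^{W_0}_\cdot)$ to the $\bar{x}_i$-dynamics. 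Its drift equals $A_k\eta_k$ plus a conditionally centred i.i.d.\ average of the $\varphi_k(\bar{p}_i,\bar{q}_i)$, and its martingale part is $\tfrac{1}{N_k}\sum_{i\in\mathcal{I}_k}\int_0^{\cdot}(\cdots)\,dW_i$, whose predictable bracket is of order $1/N_k$. A Burkholder--Davis--Gundy estimate for the martingale part, Cauchy--Schwarz together with the pointwise conditional variance bound for the drift integral, and Gronwall's inequality then give $\mathbb{E}\sup_t|\eta_k(t)|^2\le M/N_k$.

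For the first bracket I would introduce $\delta_i:=\breve{x}_i-\bar{x}_i$. Subtracting (\ref{limiting state under cc condition}) from (\ref{decentralized state}) one sees that the control terms $B_0\varphi_0(\bar{p}_0,\bar{q}_0)$ and $B\varphi_{\theta_i}(\bar{p}_i,\bar{q}_i)$ cancel exactly, so $\delta_0,\delta_i$ solve linear SDEs with zero initial data forced only by the mean-field error $\Delta:=\breve{x}^{(N)}-\Phi$ (and, for the minor agents, by the coupling $H\delta_0$). Standard BDG and Gronwall estimates, together with the uniform bounds of Lemma \ref{lemma for xN}, give $\mathbb{E}\sup_{s\le t}|\delta_0(s)|^2\le M\int_0^t\mathbb{E}\sup_{r\le s}|\Delta(r)|^2\,ds$ and then the analogous bound for each $\delta_i$; averaging through $|\frac1N\sum_i\delta_i|^2\le\frac1N\sum_i|\delta_i|^2$ yields $\mathbb{E}\sup_{s\le t}|\breve{x}^{(N)}-\bar{x}^{(N)}|^2\le M\int_0^t\mathbb{E}\sup_{r\le s}|\Delta|^2\,ds$. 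Combining the two brackets gives
\[
\mathbb{E}\sup_{s\le t}|\Delta(s)|^2\le M\int_0^t\mathbb{E}\sup_{r\le s}|\Delta(r)|^2\,ds+M\Big(\tfrac1N+\varepsilon_N^2\Big),
\]
and a final application of Gronwall's inequality closes the estimate and proves the lemma.
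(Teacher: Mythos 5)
Your proof is correct, and it differs from the paper's in its organization rather than in its technical core. The paper never introduces the average $\bar{x}^{(N)}=\frac{1}{N}\sum_i\bar{x}_i$ of the limiting states: it compares the realized type-averages $\breve{x}^{(k)}=\frac{1}{N_k}\sum_{i\in\mathcal{I}_k}\breve{x}_i$ directly with $m_k$, so the SDE for $\Delta_k=\breve{x}^{(k)}-m_k$ carries the forcing terms $F_1(\breve{x}^{(N)}-\Phi)$ and $F_2(\breve{x}^{(N)}-\Phi)$ in both drift and diffusion, and everything (law-of-large-numbers fluctuation and stability) is absorbed into a single coupled Gronwall loop for $\breve{x}^{(N)}-\Phi$. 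Your insertion of $\bar{x}^{(N)}$ decouples these two effects: the quantity $\eta_k=\frac{1}{N_k}\sum_{i\in\mathcal{I}_k}\bar{x}_i-m_k$ satisfies an SDE whose drift and martingale part involve only a priori bounded, conditionally i.i.d.\ quantities (the $F_1\Phi$ terms cancel exactly), so you obtain the self-contained bound $\mathbb{E}\sup_t|\eta_k(t)|^2\le M/N_k$ without any feedback from $\Delta$, and the feedback enters only through the stability bracket $\breve{x}^{(N)}-\bar{x}^{(N)}$. The technical ingredients are identical in both proofs --- vanishing of cross terms under $\mathbb{E}(\cdot|\mathcal{F}^{W_0}_s)$ by conditional independence, BDG with the $1/N_k^2$ predictable bracket, the split $\sum_k\pi_k^{(N)}(\cdot)+\sum_k(\pi_k^{(N)}-\pi_k)m_k$ with \textbf{(A1)}, and a closing Gronwall --- but your route has two side benefits: the law-of-large-numbers estimate you isolate is a clean unconditional statement, and the stability estimates for $\delta_0=\breve{x}_0-\bar{x}_0$ and $\delta_i=\breve{x}_i-\bar{x}_i$ that you derive on the way are exactly the content of the paper's Lemma \ref{lemma for x and xi}, which the paper instead proves afterwards as a consequence of Lemma \ref{lemma for xN and m}; in effect you merge the two lemmas. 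One point to make explicit when writing this up: the identification $\mathbb{E}(\bar{x}_i(t)|\mathcal{F}^{W_0}_t)=m_k(t)$ for $i\in\mathcal{I}_k$ rests on $(\bar{x}_i,\bar{p}_i,\bar{q}_i)$ having the same conditional law as $(\alpha_k,\beta_k,\gamma_k)$, which the paper records in the proof of Lemma \ref{lemma for xN}, and the bound $\mathbb{E}\sup_t|m_k(t)|^2\le M$ should be justified from the random ODE (\ref{mk}) together with (\ref{bounded}), as you indicate.
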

\begin{proof}
For each fixed $1\leq k\leq K$, we consider the $k$-type minor agents.
We denote $\breve{x}^{(k)}:=\frac{1}{N_k}\sum_{i\in\mathcal{I}_k}\breve{x}_i$.
Let us add up $N_k$ states of all $k$-type minor agents and then divide by $N_k$, we have
\begin{equation}\label{k type sum}
   \begin{aligned}
      d\breve{x}^{(k)}&=\left[A_{k}\breve{x}^{(k)}\!+\!\frac{B}{N_k}\sum_{i\in\mathcal{I}_k}\varphi_k(\bar{p}_i,\bar{q}_i)
      \!+\!F^1\breve{x}^{(N)}\!+\!b_0\right]dt\\
        &\qquad\!+\!\frac{1}{N_k}\sum_{i\in\mathcal{I}_k}\left[C\breve{x}_{i}\!+\!D_{k}\varphi_k(\bar{p}_i,\bar{q}_i)
      +F_2\breve{x}^{(N)}\!+\!H\breve{x}_0\!+\!\sigma_0\right]dW_i(t), \qquad
      \breve{x}^{(k)}(0)=x.
    \end{aligned}
\end{equation}
Recall $m_k=\mathbb{E}(\alpha_k(t)|\mathcal{F}_t^{W_0})$, and taking conditional expectation $\mathbb{E}(\cdot|\mathcal{F}_t^{W_0})$ on the first equation of (\ref{minor cc}) and noticing that $\mathbb{E}(\varphi_k(\bar{p}_i,\bar{q}_i)|\mathcal{F}_t^{W_0})
=\mathbb{E}(\varphi_k(\beta_k,\gamma_k)|\mathcal{F}_t^{W_0})$, for any $i\in\mathcal{I}_k$ and $\alpha_k(s)$ is $\mathcal{F}_s^k$-adapted, then we have
\begin{equation}\label{mk}
    \begin{aligned}
    dm_k=\bigg( A_{k}m_k\!+\!B\mathbb{E}(\varphi_k(\bar{p}_i,\bar{q}_i)|\mathcal{F}_t^{W_0})
      \!+\!F_1\Phi
      \!+\!b\bigg)dt, \qquad m_k(0)=x.
    \end{aligned}
\end{equation}
From (\ref{k type sum}) and  (\ref{mk}), by denoting  $\Delta_k(t):=\breve{x}^{(k)}(t)-m_k(t)$, we have
\[
\begin{aligned}
d\Delta_k&=\Bigg[A_k\Delta_k\!+\!F_1(\breve{x}^{(N)}\!-\!\Phi)
\!+\!\frac{B}{N_k}\Big(\sum_{i\in\mathcal{I}_k}\varphi_k(\bar{p}_i,\bar{q}_i)
\!-\!\mathbb{E}(\varphi_k(\bar{p}_i,\bar{q}_i)|\mathcal{F}_t^{W_0})\Big)\Bigg]dt\\
&\quad+\frac{1}{N_k}\sum_{i\in\mathcal{I}_k}\left[C\breve{x}_{i}\!+\!D_{k}\varphi_k(\bar{p}_i,\bar{q}_i)
      +F_2\breve{x}^{(N)}\!+\!H\breve{x}_0\!+\!\sigma_0\right]dW_i(t),\qquad
\Delta(0)=0.
\end{aligned}
\]
The inequality $(x+y)^2\leq 2x^2+2y^2$ and Cauchy-Schwartz inequality yield that
\[
\begin{aligned}
&\mathbb{E}\sup_{0\leq s\leq t}|\Delta_k(s)|^2\leq M\mathbb{E}\int_0^t\Big[|\Delta_k(s)|^2+|\breve{x}^{(N)}(s)-\Phi(s)|^2\Big]ds\\
&\qquad+M\mathbb{E}\int_0^t\left|\frac{1}{N_k}\sum_{i\in\mathcal{I}_k}\varphi_k(\bar{p}_i(s),\bar{q}_i(s))
-\mathbb{E}(\varphi_k(\bar{p}_i(s),\bar{q}_i(s))|\mathcal{F}_s^{W_0})\right|^2ds\\
&\qquad+\frac{2}{N_k^2}\mathbb{E}\sup_{0\leq r\leq t}\left|\int_0^r\sum_{i\in\mathcal{I}_k}\left[C\breve{x}_{i}\!+\!D_{k}\varphi_k(\bar{p}_i,\bar{q}_i)
      +F_2\breve{x}^{(N)}\!+\!H\breve{x}_0\!+\!\sigma_0\right]dW_i(s)\right|^2.
\end{aligned}
\]
From BDG inequality, we obtain
\begin{equation}\label{ee3}
\begin{aligned}
&\mathbb{E}\sup_{0\leq s\leq t}|\Delta_k(s)|^2\leq M\mathbb{E}\int_0^t\Big[|\Delta_k(s)|^2+|\breve{x}^{(N)}(s)-\Phi(s)|^2\Big]ds\\
&\qquad +M\mathbb{E}\int_0^t\Bigg|\frac{1}{N_k}\sum_{i\in\mathcal{I}_k}\varphi_k(\bar{p}_i(s),\bar{q}_i(s))
-\mathbb{E}(\varphi_k(\bar{p}_i(s),\bar{q}_i(s))|\mathcal{F}_s^{W_0})\Bigg|^2ds\\
&\qquad+\frac{M}{N_k^2}\mathbb{E}\sum_{i\in\mathcal{I}_k}\int_0^t\left|C\breve{x}_{i}
\!+\!D_{k}\varphi_k(\bar{p}_i,\bar{q}_i)
      +F_2(\breve{x}^{(N)}-\Phi)\!+\!F_2\Phi\!+\!H\breve{x}_0\!+\!\sigma_0\right|^2ds.
\end{aligned}
\end{equation}
Let us first focus on the second term of the right hand-side of (\ref{ee3}). Since for each fixed $s\in[0,T]$, under the conditional expectation $\mathbb{E}(\cdot|\mathcal{F}_s^{W_0}))$, for each $1\leq k\leq K$, the processes $(\bar{x}_i(s),\bar{p}_i(s),\bar{q}_i(s))$, $i\in\mathcal{I}_k$, are independent identically distributed, if we denote $\mu(s)=\mathbb{E}(\varphi_k(\bar{p}_i(s),\bar{q}_i(s))|\mathcal{F}_s^{W_0}))$, then $\mu$ does not depend on $i$ and moreover we have
\[
\begin{aligned}
&\mathbb{E}\left|\frac{1}{N_k}\sum_{i\in\mathcal{I}_k}\varphi_k(\bar{p}_i(s),\bar{q}_i(s))
-\mu(s)\right|^2
=\frac{1}{N^2_k}\mathbb{E}\left|\sum_{i\in\mathcal{I}_k}\big[\varphi_k(\bar{p}_i(s),\bar{q}_i(s))
-\mu(s)\big]\right|^2\\
=&\frac{1}{N^2_k}\mathbb{E}\left(\sum_{i\in\mathcal{I}_k}\left|\varphi_k(\bar{p}_i(s),\bar{q}_i(s))\!-\!\mu(s)\right|^2
\!+\!\sum_{i,j\in\mathcal{I}_k,j\neq i}\left\langle\varphi_k(\bar{p}_i(s),\bar{q}_i(s))
\!-\!\mu(s),\varphi_k(\bar{p}_j(s),\bar{q}_j(s))\!-\!\mu(s)\right\rangle\right).
\end{aligned}
\]
Since $(\bar{p}_i(s),\bar{q}_i(s))$, $i\in\mathcal{I}_k$, are independent under $\mathbb{E}(\cdot|\mathcal{F}_s^{W_0}))$, we have
\[
\begin{aligned}
&\mathbb{E}\sum_{i,j\in\mathcal{I}_k,j\neq i}\left\langle\varphi_k(\bar{p}_i(s),\bar{q}_i(s))\!-\!\mu(s),
\varphi_k(\bar{p}_j(s),\bar{q}_j(s))\!-\!\mu(s)\right\rangle\\
=&\mathbb{E}\left[\sum_{i,j\in\mathcal{I}_k,j\neq i}\mathbb{E}\left(\left\langle\varphi_k(\bar{p}_i(s),\bar{q}_i(s))\!
-\!\mu(s),\varphi_k(\bar{p}_j(s),\bar{q}_j(s))\!-\!\mu(s)\right\rangle\Big|\mathcal{F}_s^{W_0}\right)\right]\\
=&\mathbb{E}\left[\sum_{i,j\in\mathcal{I}_k,j\neq i}\left\langle\mathbb{E}\left(\varphi_k(\bar{p}_i(s),\bar{q}_i(s))\!
-\!\mu(s)\Big|\mathcal{F}_s^{W_0}\right),
\mathbb{E}\left(\varphi_k(\bar{p}_j(s),\bar{q}_j(s))\!-\!\mu(s)\Big|\mathcal{F}_s^{W_0}\right)\right\rangle\right]
=0.
\end{aligned}
\]
Then, due to the fact that $(\bar{p}_i,\bar{q}_i)$, $1\leq i\leq N$  are identically distributed, we obtain
\[
\begin{aligned}
&\mathbb{E}\int_0^t\Bigg|\frac{1}{N_k}\sum_{i\in\mathcal{I}_k}\varphi_k(\bar{p}_i(s),\bar{q}_i(s))
-\mathbb{E}(\varphi_k(\bar{p}_i(s),\bar{q}_i(s))|\mathcal{F}_s^{W_0})\Bigg|^2ds\\
=& \frac{1}{N^2_k}\int_0^t\mathbb{E}\sum_{i\in\mathcal{I}_k}
\left|\varphi_k(\bar{p}_i(s),\bar{q}_i(s))\!-\!\mu(s)\right|^2ds
\!=\!\frac{1}{N_k}\int_0^t\mathbb{E}\left|\varphi_k(\bar{p}_i(s),\bar{q}_i(s))\!-\!\mu(s)\right|^2ds
\!\leq\!\frac{M}{N_k},
\end{aligned}
\]
where the last equality due to (\ref{bounded}). Now we focus on the third term of the right hand-side of (\ref{ee3}), using (\ref{bounded}), Lemma \ref{lemma for xN} and that $(\breve{x}_i(s),\bar{p}_i(s),\bar{q}_i(s))$, $i\in\mathcal{I}_k$, are identically distributed, it follows
\[
\begin{aligned}
&\frac{M}{N^2_k}\sum_{i\in\mathcal{I}_k}\mathbb{E}\int_0^t\left|C\breve{x}_{i}\!+\!D_{k}\varphi_k(\bar{p}_i,\bar{q}_i)
      +F_2(\breve{x}^{(N)}-\Phi)\!+\!F_2\Phi\!+\!H\breve{x}_0\!+\!\sigma_0\right|^2ds\\
\leq & \frac{M}{N^2_k}\sum_{i\in\mathcal{I}_k}\mathbb{E}\int_0^t\left(|\breve{x}_{i}(s)|^2
\!+\!|\varphi_k(\bar{p}_i,\bar{q}_i)|^2
\!+\!|\breve{x}^{(N)}(s)-\Phi(s)|^2\!+\!|\Phi(s)|^2\!+\!|\breve{x}_0(s)|^2\!+\!|\sigma(s)|^2\right)ds\\
\leq &\frac{M}{N_k}\mathbb{E}\int_0^t\mathbb{E}|\breve{x}^{(N)}(s)-\Phi(s)|^2ds+\frac{M}{N_k}.
\end{aligned}
\]
Therefore, from above analysis, we get from (\ref{ee3}) that
\[
\mathbb{E}\sup_{0\leq s\leq t}|\Delta_k(s)|^2\leq M\mathbb{E}\int_0^t\Big[|\Delta_k(s)|^2+|\breve{x}^{(N)}(s)-\Phi(s)|^2\Big]ds
+\frac{M}{N_k},
\]
and Gronwall's inequality yields that
\begin{equation}\label{minor estimate 1}
\mathbb{E}\sup_{0\leq s\leq t}|\Delta_k(s)|^2\leq M\mathbb{E}\int_0^t|\breve{x}^{(N)}(s)-\Phi(s)|^2ds+\frac{M}{N_k}.
\end{equation}
Since
\[
\begin{aligned}
\breve{x}^{(N)}(s)-\Phi(s)=&\sum_{k=1}^{K}\left[\pi_k^{(N)}\frac{1}{N_k}\sum_{i\in\mathcal{I}_k}\breve{x}_i(s)
-\pi_k\mathbb{E}(\alpha_k(s)|\mathcal{F}_s^{W_0})\right]\\
=&\sum_{k=1}^{K}\left[\pi_k^{(N)}\left(\frac{1}{N_k}\sum_{i\in\mathcal{I}_k}\breve{x}_i(s)
-\mathbb{E}(\alpha_k(s)|\mathcal{F}_s^{W_0})\right)+
\left(\pi_k^{(N)}-\pi_k\right)\mathbb{E}(\alpha_k(s)|\mathcal{F}_s^{W_0})\right]\\
=&\sum_{k=1}^{K}\pi_k^{(N)}\Delta_k(s)+\sum_{k=1}^{K}
\left(\pi_k^{(N)}-\pi_k\right)\mathbb{E}(\alpha_k(s)|\mathcal{F}_s^{W_0}),
\end{aligned}
\]
by using (\ref{bounded}),(\ref{minor estimate 1}) and $\pi_k^{(N)}=\frac{N_k}{N}\leq 1$ we obtain that for any $t\in[0,T]$,
\[
\begin{aligned}
\mathbb{E}\sup_{0\leq s\leq t}|\breve{x}^{(N)}(s)-\Phi(s)|^2
\leq & \mathbb{E}\sum_{k=1}^{K}\sup_{0\leq s\leq t}\pi_k^{(N)}|\Delta_k(s)|^2+M\varepsilon_{N}^2\\
\leq & M\mathbb{E}\int_0^t|\breve{x}^{(N)}(s)-\Phi(s)|^2ds+\frac{M}{N}+M\varepsilon_{N}^2.
\end{aligned}
\]
Finally, by using Gronwall's inequality, we complete the proof.
\end{proof}

\begin{lemma}\label{lemma for x and xi}
Under the assumptions of \emph{\textbf{(A1)-(A4)}}, we have
\[
\sup_{0 \leq i \leq N}\mathbb{E}\sup_{0\leq t\leq T}\Big|\breve{x}_{i}(t)-\bar{x}_{i}(t)\Big|^2
\leq M\Big(\frac{1}{N}+\varepsilon_{N}^2\Big).
\]
\end{lemma}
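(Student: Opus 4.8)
The plan is to exploit the fact that the decentralized dynamics (\ref{decentralized state}) and the limiting dynamics (\ref{limiting state under cc condition}) differ only through their mean-field coupling: the feedback terms $\varphi_0(\bar{p}_0,\bar{q}_0)$ and $\varphi_{\theta_i}(\bar{p}_i,\bar{q}_i)$, together with the inhomogeneous data $b_0,\sigma_0,b,\sigma$, are identical in both systems. Setting $\tilde{x}_i:=\breve{x}_i-\bar{x}_i$ for $0\le i\le N$ and subtracting, all the control and constant terms cancel, leaving for the major agent
\begin{equation*}
d\tilde{x}_0=\big[A_0\tilde{x}_0+F_0^1(\breve{x}^{(N)}-\Phi)\big]dt
+\big[C_0\tilde{x}_0+F_0^2(\breve{x}^{(N)}-\Phi)\big]dW_0(t),\quad \tilde{x}_0(0)=0,
\end{equation*}
and for each minor agent
\begin{equation*}
d\tilde{x}_i=\big[A_{\theta_i}\tilde{x}_i+F_1(\breve{x}^{(N)}-\Phi)\big]dt
+\big[C\tilde{x}_i+F_2(\breve{x}^{(N)}-\Phi)+H\tilde{x}_0\big]dW_i(t),\quad \tilde{x}_i(0)=0.
\end{equation*}
In this way the estimate is reduced to controlling the quantity $\breve{x}^{(N)}-\Phi$ already bounded in Lemma \ref{lemma for xN and m}.

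First I would treat the major agent. Applying It\^o's formula to $|\tilde{x}_0|^2$, taking $\sup_{0\le s\le t}$, and using the Burkholder--Davis--Gundy (BDG) inequality on the stochastic integral together with Young's inequality to absorb the $\sup$ term, the coefficient bounds from (\textbf{A2}) give
\begin{equation*}
\mathbb{E}\sup_{0\le s\le t}|\tilde{x}_0(s)|^2
\le M\,\mathbb{E}\int_0^t|\tilde{x}_0(s)|^2ds
+M\,\mathbb{E}\int_0^t|\breve{x}^{(N)}(s)-\Phi(s)|^2ds.
\end{equation*}
Gronwall's inequality and Lemma \ref{lemma for xN and m} then yield $\mathbb{E}\sup_{0\le t\le T}|\tilde{x}_0(t)|^2\le M(\tfrac1N+\varepsilon_N^2)$.

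Next I would repeat the argument for $\tilde{x}_i$, $1\le i\le N$. It\^o's formula applied to $|\tilde{x}_i|^2$, BDG and Young's inequalities, and the coefficient bounds produce
\begin{equation*}
\mathbb{E}\sup_{0\le s\le t}|\tilde{x}_i(s)|^2
\le M\,\mathbb{E}\int_0^t|\tilde{x}_i(s)|^2ds
+M\,\mathbb{E}\int_0^t\big[|\breve{x}^{(N)}(s)-\Phi(s)|^2+|\tilde{x}_0(s)|^2\big]ds,
\end{equation*}
where the extra $|\tilde{x}_0|^2$ term comes from the $H\tilde{x}_0$ contribution in the diffusion. Since both $\mathbb{E}\int_0^T|\breve{x}^{(N)}-\Phi|^2ds$ and $\mathbb{E}\sup_{0\le t\le T}|\tilde{x}_0(t)|^2$ are already bounded by $M(\tfrac1N+\varepsilon_N^2)$, a final Gronwall step gives the same bound for $\mathbb{E}\sup_{0\le t\le T}|\tilde{x}_i(t)|^2$, with $M$ independent of both $N$ and $i$; taking $\sup_{0\le i\le N}$ completes the proof.

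The estimate is essentially a corollary of Lemma \ref{lemma for xN and m}, so there is no serious analytical obstacle. The two points requiring care are that the feedback controls genuinely cancel in the difference --- this uses that $\breve{x}_i$ and $\bar{x}_i$ are driven by the \emph{same} adjoint processes $(\bar{p}_i,\bar{q}_i)$, as noted after (\ref{limiting state under cc condition}) --- and that the constant $M$ stays uniform in $i$, which holds because the coefficients and initial data are common to all agents of a given type and the bound for $\tilde{x}_0$ is itself independent of $i$.
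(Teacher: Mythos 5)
Your proposal is correct and follows essentially the same route as the paper: subtract the two systems so that the feedback controls and inhomogeneous terms cancel, bound the major agent's difference via a standard SDE estimate combined with Lemma \ref{lemma for xN and m}, and then feed that bound (together with Lemma \ref{lemma for xN and m} again) into the analogous estimate for each minor agent's difference, with constants uniform in $i$. The only difference is that you spell out the ``classical estimate for the SDE'' via It\^o, BDG, and Gronwall, which the paper invokes without detail.
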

\begin{proof}
On the one hand, from both the first equation of (\ref{decentralized state}) and  (\ref{limiting state under cc condition}), we have
\[\label{ee4}
\left\{
\begin{aligned}
d(\breve{x}_{0}-\bar{x}_{0})&=\Big[A_0(\breve{x}_{0}-\bar{x}_{0})\!+\!F_0^1(\breve{x}^{(N)}
\!-\!\Phi)\Big]dt
\!+\!\Big[C_0(\breve{x}_{0}-\bar{x}_{0})\!+\!F_0^2(\breve{x}^{(N)}\!-\!\Phi)\Big]dW_i(t),\\
\breve{x}_{0}(0)-\bar{x}_{0}(0)&=0.
\end{aligned}
\right.
\]
The classical estimate for the SDE yields that
\[
\mathbb{E}\sup_{0\leq t\leq T}\Big|\breve{x}_{0}(t)-\bar{x}_{0}(t)\Big|^2\leq
ME\int_0^T\left|\breve{x}^{(N)}(s)-\Phi(s)\right|^2ds,
\]
where $M$ is a constant independent of $N$. Noticing  Lemma \ref{lemma for xN and m},
we obtain
\begin{equation}\label{ee5}
\mathbb{E}\sup_{0\leq t\leq T}\Big|\breve{x}_{0}(t)-\bar{x}_{0}(t)\Big|^2
\leq M\Big(\frac{1}{N}+\varepsilon_{N}^2\Big).
\end{equation}
On the other hand, from the second equation of (\ref{decentralized state}) and
the third equation of  (\ref{limiting state under cc condition}), we have that for $1\leq i\leq N$,
\[
\left\{
\begin{aligned}
&d(\breve{x}_{i}-\bar{x}_{i})=\Big[A_{\theta_i}(\breve{x}_{i}-\bar{x}_{i})\!+\!F_1(\breve{x}^{(N)}
\!-\!\Phi)\Big]dt\!+\!\Big[C(\breve{x}_{i}-\bar{x}_{i})\!+\!F_2(\breve{x}^{(N)}\!-\!\Phi)
+H(\breve{x}_{0}-\bar{x}_{0})\Big]dW_i(t),\\
&\breve{x}_{i}(0)-\bar{x}_{i}(0)=0.
\end{aligned}
\right.
\]
The classical estimate for the SDE yields that
\[
\mathbb{E}\sup_{0\leq t\leq T}\Big|\breve{x}_{i}(t)-\bar{x}_{i}(t)\Big|^2\leq
M\mathbb{E}\int_0^T\left(\left|\breve{x}^{(N)}(s)-\Phi(s)\right|^2
+\left|\breve{x}_0(s)-\bar{x}_{0}(s)\right|^2\right)ds.
\]
Noticing Lemma \ref{lemma for xN and m} and (\ref{ee5}), we obtain
$
\mathbb{E}\sup\limits_{0\leq t\leq T}\Big|\breve{x}_{i}(t)-\bar{x}_{i}(t)\Big|^2
\leq M\Big(\frac{1}{N}+\varepsilon_{N}^2\Big).
$
Thus, considering also (\ref{ee5}) we complete the proof.
\end{proof}
\begin{lemma}\label{first lemma for cost}
Under the assumptions of \emph{\textbf{(A1)-(A4)}}, for all $ 0\leq i\leq N$, we have
\[
 \Big|\mathcal{J}_i(\bar{u}_i, \bar{u}_{-i})-J_i(\bar{u}_i)\Big|=O\Big(\frac{1}{\sqrt{N}}+\varepsilon_N\Big).
\]
\end{lemma}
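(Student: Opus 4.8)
The plan is to exploit the fact that both $\mathcal{J}_i(\bar{u}_i,\bar{u}_{-i})$ and $J_i(\bar{u}_i)$ are evaluated at the \emph{same} decentralized control $\bar{u}_i=\varphi_{\theta_i}(\bar{p}_i,\bar{q}_i)$, so that the quadratic running cost $\frac{1}{2}\mathbb{E}\int_0^T\langle R_{\theta_i}\bar{u}_i,\bar{u}_i\rangle\,dt$ in the control variable is identical in the two functionals and cancels exactly in the difference. Hence only the state-dependent terms survive. For a minor agent ($1\le i\le N$) I would introduce the tracking errors
\[
\xi_i:=\breve{x}_i-\rho\breve{x}^{(N)}-(1-\rho)\breve{x}_0,\qquad
\eta_i:=\bar{x}_i-\rho\Phi-(1-\rho)\bar{x}_0,
\]
and for the major agent ($i=0$) set $\xi_0:=\breve{x}_0-\rho_0\breve{x}^{(N)}$ and $\eta_0:=\bar{x}_0-\rho_0\Phi$, so that in all cases
\[
\mathcal{J}_i(\bar{u}_i,\bar{u}_{-i})-J_i(\bar{u}_i)
=\tfrac12\mathbb{E}\Big[\int_0^T\big(\langle Q\xi_i,\xi_i\rangle-\langle Q\eta_i,\eta_i\rangle\big)dt
+\langle G\xi_i(T),\xi_i(T)\rangle-\langle G\eta_i(T),\eta_i(T)\rangle\Big],
\]
with $Q,G$ replaced by $Q_0,G_0$ when $i=0$.

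Next I would use the elementary identity $\langle Qa,a\rangle-\langle Qb,b\rangle=\langle Q(a-b),a+b\rangle$, valid for symmetric $Q$, together with the Cauchy--Schwarz inequality, to bound each term by $\|Q\|_{\infty}\,\|\xi_i-\eta_i\|_{L^2}\,\|\xi_i+\eta_i\|_{L^2}$, and similarly for the terminal term with $G$. The factor $\|\xi_i+\eta_i\|_{L^2}$ is $O(1)$ uniformly in $N$: the boundedness of $\breve{x}_i,\breve{x}_0$ and of $\breve{x}^{(N)}$ follows from Lemma~\ref{lemma for xN} (and the definition of $\breve{x}^{(N)}$), while that of $\bar{x}_i,\bar{x}_0,\Phi$ follows from the a priori estimate (\ref{bounded}).

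For the factor $\|\xi_i-\eta_i\|_{L^2}$ I would write
\[
\xi_i-\eta_i=(\breve{x}_i-\bar{x}_i)-\rho(\breve{x}^{(N)}-\Phi)-(1-\rho)(\breve{x}_0-\bar{x}_0)
\]
and control the three pieces by Lemma~\ref{lemma for x and xi} for $\breve{x}_i-\bar{x}_i$ and $\breve{x}_0-\bar{x}_0$ (the statement is uniform over $0\le i\le N$) and by Lemma~\ref{lemma for xN and m} for $\breve{x}^{(N)}-\Phi$; each contributes $O\big(\frac{1}{N}+\varepsilon_N^2\big)$ to the squared $L^2$ norm, so that $\|\xi_i-\eta_i\|_{L^2}=O\big(\frac{1}{\sqrt{N}}+\varepsilon_N\big)$ after taking square roots and using $\sqrt{a+b}\le\sqrt{a}+\sqrt{b}$. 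Combining the two factors then yields $|\mathcal{J}_i(\bar{u}_i,\bar{u}_{-i})-J_i(\bar{u}_i)|=O(1)\cdot O\big(\frac{1}{\sqrt{N}}+\varepsilon_N\big)=O\big(\frac{1}{\sqrt{N}}+\varepsilon_N\big)$, with constants uniform in $i$ because every cited estimate is.

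The argument is essentially bookkeeping and no genuinely hard step is expected. The only points requiring care are that the estimates of Lemmas~\ref{lemma for xN}--\ref{lemma for x and xi} are phrased for the squared supremum norms with rate $\frac{1}{N}+\varepsilon_N^2$, so one must pass consistently to the $L^2$ norms and track the square roots, and that the same chain of estimates must be verified to apply uniformly to both the minor and the major agent's cost, which is ensured by the fact that Lemma~\ref{lemma for x and xi} already ranges over $0\le i\le N$.
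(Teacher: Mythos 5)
Your proposal is correct and follows essentially the same route as the paper: the control cost cancels because both functionals use the same $\bar{u}_i$, and the remaining quadratic state terms are bounded by a polarization identity plus Cauchy--Schwarz, with the difference terms controlled by Lemma \ref{lemma for xN and m} and Lemma \ref{lemma for x and xi} and the $O(1)$ factors by Lemma \ref{lemma for xN} and the a priori bound (\ref{bounded}). The only cosmetic difference is that you use $\langle Qa,a\rangle-\langle Qb,b\rangle=\langle Q(a-b),a+b\rangle$ where the paper writes the equivalent expansion $\langle Q(a-b),a-b\rangle+2\langle Q(a-b),b\rangle$.
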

\begin{proof}
Let us first consider the major agent. Recall (\ref{major agent cost functional}), (\ref{major agent cost functional limit}) and (\ref{sum of mk}), we have
\begin{equation}\label{ee7}
\begin{aligned}
&\mathcal{J}_0(\bar{u}_0, \bar{u}_{-0})-J_0(\bar{u}_0)=\frac{1}{2}\mathbb{E}\bigg[\int_0^T\Bigg( \Big <Q_0\big(\breve{x}_0\!-\!\rho_0\breve{x}^{(N)}\big),\breve{x}_0\!-\!\rho_0\breve{x}^{(N)}\Big>
\!-\!\Big <Q_0\big(\bar{x}_0\!-\!\rho_0\Phi\big),\bar{x}_0\!-\!\rho_0\Phi\Big>\Bigg)dt\\
&\!+\!\Big<G_0\big(\breve{x}_0(T)\!-\!\rho_0\breve{x}^{(N)}(T)\big),\breve{x}_0(T)\!-\!\rho_0\breve{x}^{(N)}(T)\Big>
\!-\!\Big<G_0\big(\bar{x}_0(T)\!-\!\rho_0\Phi(T)\big),\bar{x}_0(T)\!-\!\rho_0\Phi(T)\Big>\bigg].
\end{aligned}
\end{equation}
From (\ref{bounded}), we have $\mathbb{E}\sup\limits_{0\leq t\leq T}\left|\bar{x}_0(t)\right|^2\leq M$ and
$\mathbb{E}\sup\limits_{0\leq t\leq T}\left|\alpha_i(t)\right|^2\leq M$,  for any $0\leq i\leq N$.
Noticing that $\Phi(t)=\sum_{k=1}^{K}\pi_k\mathbb{E}(\alpha_k(t)|\mathcal{F}_t^{W_0})$, it is not hard to check that
$
\mathbb{E}\sup\limits_{0\leq t\leq T}\left|\Phi(t)\right|^2\leq M.
$
Now, from such estimates and Lemma \ref{lemma for xN and m}, Lemma \ref{lemma for x and xi} as well as
\[
\begin{aligned}
&\langle Q_0(a-b), a-b\rangle-\langle Q_0(c-d), c-d\rangle\\
=&\langle Q_0(a-b-(c-d)), a-b-(c-d)\rangle+2\langle Q_0(a-b-(c-d)), c-d\rangle,
\end{aligned}
\]
we have
\[
\begin{aligned}
&\Bigg|\mathbb{E}\bigg [\int_0^T\Bigg( \Big< Q_0\big(\breve{x}_0\!-\!\rho_0\breve{x}^{(N)}\big),\breve{x}_0\!-\!\rho_0\breve{x}^{(N)}\Big>\!-\!\Big< Q_0\big(\bar{x}_0\!-\!\rho_0\Phi\big),\bar{x}_0\!-\!\rho_0\Phi\Big>\Bigg)dt\Bigg|\\
\leq& M\int_0^T\mathbb{E}\left|\breve{x}_0-\rho_0\breve{x}^{(N)}-(\bar{x}_0-\rho_0\Phi)\right|^2dt
+M\int_0^T\mathbb{E}\left|\breve{x}_0-\rho_0\breve{x}^{(N)}
-(\bar{x}_0-\rho_0\Phi)\right|
\left|\bar{x}_0-\rho_0\Phi\right|dt\\
\leq &M\int_0^T\mathbb{E}\left|\breve{x}_0-\bar{x}_0\right|^2dt
+M\int_0^T\mathbb{E}\left|\breve{x}^{(N)}-\Phi\right|^2dt\\
&\qquad\qquad+M\int_0^T\left(\mathbb{E}
\left|\breve{x}_0-\rho_0\breve{x}^{(N)}-(\bar{x}_0-\rho_0\Phi)\right|^2\right)^{\frac{1}{2}}
\left(\mathbb{E}\left|\bar{x}_0-\rho_0\Phi\right|^2\right)^{\frac{1}{2}}dt\\
\leq &M\int_0^T\mathbb{E}\left|\breve{x}_0-\bar{x}_0\right|^2dt
+M\int_0^T\mathbb{E}\left|\breve{x}^{(N)}-\Phi\right|^2dt+M\int_0^T\left(\mathbb{E}\left|\breve{x}_0-\bar{x}_0\right|^2
+\mathbb{E}\left|\breve{x}^{(N)}-\Phi\right|^2\right)^{\frac{1}{2}}
dt\\
=& O\left(\frac{1}{\sqrt{N}}+\varepsilon_N\right).
\end{aligned}
\]
Similar argument allows us to show that
\[
\begin{aligned}
&\left|\mathbb{E}\bigg [\Big< G_0\big(\breve{x}_0(T)\!-\!\rho_0\breve{x}^{(N)}(T)\big),\breve{x}_0(T)\!-\!\rho_0\breve{x}^{(N)}(T)\Big>
\!-\!\Big< G_0\big(\bar{x}_0(T)\!-\!\rho_0\Phi(T)\big),\bar{x}_0(T)\!-\!\rho_0\Phi(T)\Big>\bigg]\right|\\
&=O\left(\frac{1}{\sqrt{N}}+\varepsilon_N\right).
\end{aligned}
\]
Thus, the proof for the major agent is completed by noticing (\ref{ee7}). Let us now focus on the minor agents, for $1\leq i\leq N$, recalling (\ref{minor agent cost functional}), (\ref{minor agent cost functional limit}) and (\ref{sum of mk}), we have
\[
\begin{aligned}
   \mathcal {J}_i(\bar{u}_i,\bar{u}_{-i})=&\frac{1}{2}\mathbb{E}
    \bigg [ \int_0^T \left(\Big <Q\big(\breve{x}_i-\rho\breve{x}^{(N)}-(1-\rho)\breve{x}_0\big),\breve{x}_i-\rho\breve{x}^{(N)}
    -(1-\rho)\breve{x}_0\Big>\!+\!\big<R_{\theta_i}\bar{u}_i,\bar{u}_i\big>\right)dt\\
    &\!+\!\Big<G\big(\breve{x}_i(T)\!-\!\rho\breve{x}^{(N)}(T)\!-\!(1\!-\!\rho)\breve{x}_0(T)\big),
    \breve{x}_i(T)\!-\!\rho\breve{x}^{(N)}(T)
    \!-\!(1\!-\!\rho)\breve{x}_0(T)\Big>\bigg]
\end{aligned}
\]
and
\[
\begin{aligned}
    {J}_i(\bar{u}_i)=&\frac{1}{2}\mathbb{E}
    \bigg [ \int_0^T \left(\Big <Q\big(\bar{x}_i-\rho \Phi-(1-\rho)\bar{x}_0\big),\bar{x}_i-\rho \Phi-(1-\rho)\bar{x}_0\Big>dt+\big<R_{\theta_i}\bar{u}_i,\bar{u}_i\big>\right)dt\\
    &+\Big<G\big(\bar{x}_i(T)-\rho \Phi(T)-(1-\rho)\bar{x}_0(T)\big),\bar{x}_i(T)-\rho \Phi(T)-(1-\rho)\bar{x}_0(T)\Big>\bigg].
\end{aligned}
\]
From (\ref{bounded}), we have $\mathbb{E}\sup\limits_{0\leq t\leq T}\left|\bar{x}_i(t)\right|^2\leq M$.  Using such estimate, $
\mathbb{E}\sup\limits_{0\leq t\leq T}\left|\Phi(t)\right|^2\leq M$, Lemmas \ref{lemma for xN and m}, \ref{lemma for x and xi}, similar to the major agent, it follows that
$
\Big|\mathcal{J}_i(\bar{u}_i, \bar{u}_{-i})-J_i(\bar{u}_i)\Big|=O\Big(\frac{1}{\sqrt{N}}+\varepsilon_N\Big).
$
\end{proof}

\medskip
\subsection{Major agent's perturbation}
In this subsection, we will prove that the control strategies set $(\bar{u}_1,\bar{u}_2,\ldots,\bar{u}_N)$ is an $\varepsilon$-Nash equilibrium of \textbf{Problem (CC)} for the major agent, i.e. $\exists\quad \varepsilon=\varepsilon(N)\geq0$, $\lim\limits_{N\rightarrow\infty}\varepsilon(N)=0$ s.t
\[
\mathcal J_0(\bar u_0(\cdot),\bar u_{-0}(\cdot))\leq \mathcal J_0(u_0(\cdot),\bar{u}_{-0}(\cdot))+\varepsilon, \quad
\text{ for any } u_0\in\mathcal{U}_{ad}^{0}.
\]

Let us consider that the major agent $\mathcal{A}_0$ uses an alternative strategy $u_0$ and each minor agent $\mathcal{A}_i$ uses the control $\bar{u}_i=\varphi_{\theta_i}(\bar{p}_i,\bar{q}_i)$, where $(\bar{p}_i,\bar{q}_i)$ are solved from (\ref{limiting state under cc condition}). Then the realized state system with major agent's perturbation is, for $1\leq i\leq N$,
  \begin{equation}\label{perturbed major}
\left\{
\begin{aligned}dy_{0}&=\left[A_0y_{0}\!+\!B_0u_0\!+\!F_0^1y^{(N)}\!+\!b_0\right]dt
        \!+\!\left[C_0y_{0}\!+\!D_0u_0
      +F_0^2y^{(N)}\!+\!\sigma_0\right]dW_0(t),\\
      dy_{i}&=\left[A_{\theta_i}y_{i}\!+\!B\varphi_{\theta_i}(\bar{p}_i,\bar{q}_i)\!+\!F_1y^{(N)}
      \!+\!b_0\right]dt\\
        &\qquad\!+\!\left[Cy_{i}\!+\!D_{\theta_i}\varphi_{\theta_i}(\bar{p}_i,\bar{q}_i)
      +F_2y^{(N)}\!+\!Hy_0\!+\!\sigma_0\right]dW_i(t),\\
      y_{0}(0)&=x_0, \quad y_{i}(0)=x,
    \end{aligned}
    \right.
\end{equation}
where $y^{(N)}=\frac{1}{N}\sum_{i=1}^{N}y_{i}$. The well-posedness of above SDEs system is easy to obtain.
To prove   $(\bar{u}_0,\bar{u}_1,\ldots,\bar{u}_N)$ is an $\varepsilon$-Nash equilibrium for the major agent, we need to show that for possible alternative control $u_0$,
$
\inf_{u_0\in\mathcal{U}_{ad}^0}\mathcal{J}_0(u_0,\bar{u}_{-0})\ge\mathcal{J}_0(\bar{u}_0,\bar{u}_{-0})-\varepsilon.
$
Then we only need to consider the perturbation $u_0\in\mathcal{U}_{ad}^{0}$
such that $\mathcal{J}_0(u_0,\bar{u}_{-0})\leq\mathcal{J}_0(\bar{u}_0,\bar{u}_{-0})$. Thus, noticing $Q_0\ge0$ and $G_0\ge0$, from Lemma \ref{first lemma for cost}, we have
\[
\mathbb{E}\int_0^T\langle R_0u_0(t),u_0(t)\rangle dt\leq
\mathcal{J}_0(u_0,\bar{u}_{-0})\leq\mathcal{J}_0(\bar{u}_0,\bar{u}_{-0})
\leq J_0(\bar{u}_0)+O(\frac{1}{\sqrt{N}}+\varepsilon_N),
\]
which implies that (noting \textbf{(A4)}),
$
\mathbb{E}\int_0^T|u_0(t)|^2dt\leq M,
$
where $M$ is a constant independent of $N$. Then similar to Lemma \ref{lemma for xN},
we can show that
\begin{equation}\label{boundedness of yi}
\sup_{0\leq i\leq N}\mathbb{E}\sup_{0\leq t\leq T}|y_i(t)|^2\leq M.
\end{equation}
\begin{lemma}\label{perturbed major lemma 1}
Under the assumptions of \emph{\textbf{(A1)-(A4)}}, we have
\[
\mathbb{E}\sup_{0\leq t\leq T}\Big|y^{(N)}(t)-\Phi(t)\Big|^2=O\Big(\frac{1}{N}+\varepsilon_N^2\Big).
\]
\end{lemma}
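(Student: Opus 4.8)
The plan is to mirror closely the argument used for Lemma \ref{lemma for xN and m}, since the perturbed minor-agent dynamics in (\ref{perturbed major}) differ from the decentralized dynamics (\ref{decentralized state}) only in that the perturbed average $y^{(N)}$ and the perturbed major state $y_0$ replace $\breve{x}^{(N)}$ and $\breve{x}_0$. Crucially, each minor agent $\mathcal{A}_i$ still employs the same control $\varphi_{\theta_i}(\bar{p}_i,\bar{q}_i)$ solved from (\ref{limiting state under cc condition}), which is unaffected by the major agent's perturbation $u_0$; hence the reference process $m_k$ of (\ref{mk}) and its weighted sum $\Phi$ remain the correct limits. First I would fix $1\leq k\leq K$, set $y^{(k)}:=\frac{1}{N_k}\sum_{i\in\mathcal{I}_k}y_i$, average the $k$-type equations of (\ref{perturbed major}) over $i\in\mathcal{I}_k$, and compare with $m_k$. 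Writing $\Delta_k^y:=y^{(k)}-m_k$, an application of It\^o's formula together with the BDG inequality yields
\[
\mathbb{E}\sup_{0\leq s\leq t}|\Delta_k^y(s)|^2\leq M\mathbb{E}\int_0^t\Big(|\Delta_k^y(s)|^2+|y^{(N)}(s)-\Phi(s)|^2\Big)ds+R_k(t),
\]
where $R_k(t)$ collects the conditional-fluctuation term coming from $\frac{B}{N_k}\sum_{i\in\mathcal{I}_k}\big(\varphi_k(\bar{p}_i,\bar{q}_i)-\mathbb{E}(\varphi_k(\bar{p}_i,\bar{q}_i)|\mathcal{F}_t^{W_0})\big)$ and the stochastic-integral term.

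The two contributions gathered in $R_k(t)$ are handled exactly as in Lemma \ref{lemma for xN and m}. For the fluctuation term, since $(\bar{p}_i,\bar{q}_i)$, $i\in\mathcal{I}_k$, are conditionally i.i.d.\ under $\mathbb{E}(\cdot|\mathcal{F}_s^{W_0})$ and are the very same processes appearing there, the cross terms vanish and one extracts an $O(1/N_k)$ bound from (\ref{bounded}). For the diffusion term, the only new feature is the presence of $Hy_0$ in place of $H\breve{x}_0$; this is controlled by the a priori estimate (\ref{boundedness of yi}), which plays here the role that Lemma \ref{lemma for xN} played before, again giving an $O(1/N_k)$ contribution. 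Hence $R_k(t)\leq M/N_k$, and Gronwall's inequality produces
\[
\mathbb{E}\sup_{0\leq s\leq t}|\Delta_k^y(s)|^2\leq M\mathbb{E}\int_0^t|y^{(N)}(s)-\Phi(s)|^2ds+\frac{M}{N_k}.
\]

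Finally I would use the same decomposition as in Lemma \ref{lemma for xN and m}, namely
\[
y^{(N)}(s)-\Phi(s)=\sum_{k=1}^K\pi_k^{(N)}\Delta_k^y(s)+\sum_{k=1}^K\big(\pi_k^{(N)}-\pi_k\big)\mathbb{E}(\alpha_k(s)|\mathcal{F}_s^{W_0}),
\]
and, invoking the uniform bound $\mathbb{E}\sup_{0\leq t\leq T}|\mathbb{E}(\alpha_k(t)|\mathcal{F}_t^{W_0})|^2\leq M$ from (\ref{bounded}), the estimate $\pi_k^{(N)}\leq 1$, and the definition $\varepsilon_N=\sup_{1\leq k\leq K}|\pi_k^{(N)}-\pi_k|$, I would conclude
\[
\mathbb{E}\sup_{0\leq s\leq t}|y^{(N)}(s)-\Phi(s)|^2\leq M\mathbb{E}\int_0^t|y^{(N)}(s)-\Phi(s)|^2ds+\frac{M}{N}+M\varepsilon_N^2.
\]
A last application of Gronwall's inequality delivers the claim. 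I do not expect a genuine obstacle here: the proof is essentially a transcription of Lemma \ref{lemma for xN and m}. The only point requiring care is checking that the major agent's perturbation enters the minor-agent block solely through $Hy_0$, so that the uniform bound (\ref{boundedness of yi}) suffices and the conditional-independence structure of $(\bar{p}_i,\bar{q}_i)$ under $\mathbb{E}(\cdot|\mathcal{F}_s^{W_0})$ is left intact.
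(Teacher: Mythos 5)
Your proposal is correct and follows essentially the same route as the paper: the paper's proof also averages the $k$-type equations of (\ref{perturbed major}), compares $y^{(k)}$ with $m_k$ from (\ref{mk}), bounds the conditional-fluctuation and diffusion terms by $O(1/N_k)$ using the conditional i.i.d.\ structure together with (\ref{bounded}) and (\ref{boundedness of yi}), and then concludes via the same decomposition of $y^{(N)}-\Phi$ and a double application of Gronwall's inequality. Your observation that the minor agents' controls, and hence $m_k$ and $\Phi$, are unaffected by the major agent's perturbation is exactly the point that makes the transcription of Lemma \ref{lemma for xN and m} legitimate.
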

\begin{proof}
For each fixed $1\leq k\leq K$, we consider the $k$-type minor agents. We denote $y^{(k)}:=\frac{1}{N_k}\sum_{i\in\mathcal{I}_k}y_i$. 
As there are $N_k$  minor agents of the $k$-type, let us add up their states and then divided by $N_k$, it follows that
\[
   \begin{aligned}
      dy^{(k)}&=\left[A_{k}y^{(k)}\!+\!\frac{B}{N_k}\sum_{i\in\mathcal{I}_k}\varphi_k(\bar{p}_i,\bar{q}_i)
      \!+\!F_1y^{(N)}\!+\!b_0\right]dt\\
        &\qquad\!+\!\frac{1}{N_k}\sum_{i\in\mathcal{I}_k}\left[Cy_{i}\!+\!D_{k}\varphi_k(\bar{p}_i,\bar{q}_i)
      +F_2y^{(N)}\!+\!Hy_0\!+\!\sigma_0\right]dW_i(t),\qquad y^{(k)}(0)=x.
    \end{aligned}
\]
Recall (\ref{mk}) and if we denote $\tilde{\Delta}_k(t):=y^{(k)}(t)-m_k(t)$, it follows that
\[
\begin{aligned}
d\tilde{\Delta}_k&=\Bigg[A_k\tilde{\Delta}_k+\frac{B}{N_k}\left(\sum_{i\in\mathcal{I}_k}\varphi_k(\bar{p}_i,\bar{q}_i)
-\mathbb{E}(\varphi_k(\bar{p}_i,\bar{q}_i)|\mathcal{F}_t^{W_0})\right)+F_1(y^{(N)}-\Phi)\Bigg]dt\\
&\quad+\frac{1}{N_k}\sum_{i\in\mathcal{I}_k}\left[Cy_{i}\!+\!D_{k}\varphi_k(\bar{p}_i,\bar{q}_i)
      +F_2y^{(N)}\!+\!Hy_0\!+\!\sigma_0\right]dW_i(t),\qquad
\tilde{\Delta}(0)=0.
\end{aligned}
\]
Similar to the argument in the proof of Lemma \ref{lemma for xN and m},  we can show that
\begin{equation}\label{major ee3}
\begin{aligned}
&\mathbb{E}\sup_{0\leq s\leq t}|\tilde{\Delta}_k(s)|^2\leq M\mathbb{E}\int_0^t\Big[|\tilde{\Delta}_k(s)|^2+|y^{(N)}(s)-\Phi(s)|^2\Big]ds\\
&\qquad+M\mathbb{E}\int_0^t\Bigg|\frac{1}{N_k}\sum_{i\in\mathcal{I}_k}\varphi_k(\bar{p}_i(s),\bar{q}_i(s))
-\mathbb{E}(\varphi_k(\bar{p}_i(s),\bar{q}_i(s))|\mathcal{F}_s^{W_0})\Bigg|^2ds\\
&\qquad+\frac{M}{N_k^2}\mathbb{E}\sum_{i\in\mathcal{I}_k}\int_0^t\left|Cy_{i}
\!+\!D_{k}\varphi_k(\bar{p}_i,\bar{q}_i)
      +F_2(y^{(N)}-\Phi)\!+\!F_2\Phi\!+\!Hy_0\!+\!\sigma_0\right|^2ds,
\end{aligned}
\end{equation}
and
\[
\begin{aligned}
\mathbb{E}\int_0^t\Bigg|\frac{1}{N_k}\sum_{i\in\mathcal{I}_k}\varphi_k(\bar{p}_i(s),\bar{q}_i(s))
-\mathbb{E}(\varphi_k(\bar{p}_i(s),\bar{q}_i(s))|\mathcal{F}_s^{W_0})\Bigg|^2ds
\leq \frac{M}{N_k}.
\end{aligned}
\]
Using (\ref{bounded}), (\ref{boundedness of yi}) and the fact that $(y_i(s),\bar{p}_i(s),\bar{q}_i(s))$, $i\in\mathcal{I}_k$, are identically distributed, we have
\[
\begin{aligned}
&\frac{M}{N^2_k}\sum_{i\in\mathcal{I}_k}\mathbb{E}\int_0^t\left|Cy_{i}\!+\!D_{k}\varphi_k(\bar{p}_i,\bar{q}_i)
      +F^2(y^{(N)}-\Phi)\!+\!F_2\Phi\!+\!Hy_0\!+\!\sigma_0\right|^2ds\\
=&\frac{M}{N_k}\mathbb{E}\int_0^t\mathbb{E}|y^{(N)}(s)-\Phi(s)|^2ds+\frac{M}{N_k}.
\end{aligned}
\]
Therefore, we get from (\ref{major ee3}) that
\[
\mathbb{E}\sup_{0\leq s\leq t}|\tilde{\Delta}_k(s)|^2\leq M\mathbb{E}\int_0^t\Big[|\tilde{\Delta}_k(s)|^2+|y^{(N)}(s)-\Phi(s)|^2\Big]ds
+\frac{M}{N_k},
\]
and Gronwall's inequality yields that
\[
\mathbb{E}\sup_{0\leq s\leq t}|\tilde{\Delta}_k(s)|^2\leq
M\mathbb{E}\int_0^t\Big[|y^{(N)}(s)-\Phi(s)|^2\Big]ds.
\]
Similar to the proof of  Lemma \ref{lemma for xN and m} again, and using \textbf{(A1)}, we have for any $t\in[0,T]$,
\[
\begin{aligned}
\mathbb{E}\sup_{0\leq s\leq t}|y^{(N)}(s)-\Phi(s)|^2
\leq & \mathbb{E}\sum_{k=1}^{K}\sup_{0\leq s\leq t}\pi_k^{(N)}|\Delta_k(s)|^2+M\varepsilon_{N}^2\\
\leq & M\mathbb{E}\int_0^t\Big[|y^{(N)}(s)-\Phi(s)|^2\Big]ds+\frac{M}{N}+M\varepsilon_{N}^2.
\end{aligned}
\]
Finally, Gronwall's inequality allows us to complete the proof.
\end{proof}
\medskip

Now, we introduce the following system of the decentralized limiting state  with the major's perturbation control, for $1\leq i\leq N$:
  \begin{equation}\label{perturbed major limit}
\left\{
\begin{aligned}
d\bar{y}_{0}&=\left[A_0\bar{y}_{0}\!+\!B_0u_0\!+\!F_0^1\Phi\!+\!b_0\right]dt
        \!+\!\left[C_0\bar{y}_{0}\!+\!D_0u_0
      +F_0^2\Phi\!+\!\sigma_0\right]dW_0(t)\\
      d\bar{y}_{i}&=\left[A_{\theta_i}\bar{y}_{i}\!+\!B\varphi_{\theta_i}(\bar{p}_i,\bar{q}_i)
      \!+\!F_1\Phi
      \!+\!b_0\right]dt\\
        &\qquad\!+\!\left[C\bar{y}_{i}\!+\!D_{\theta_i}\varphi_{\theta_i}(\bar{p}_i,\bar{q}_i)
      +F_2\Phi\!+\!H\bar{y}_0\!+\!\sigma_0\right]dW_i(t)\\
      \bar{y}_{0}(0)&=x_0, \quad \bar{y}_{i}(0)=x.
    \end{aligned}
    \right.
\end{equation}
\begin{lemma}\label{perturbed major lemma 2}
Under the assumptions of \emph{\textbf{(A1)-(A4)}}, we have
\[\label{perturbed major lemma 2 estimate}
\sup_{0\leq i\leq N}\mathbb{E}\sup_{0\leq t\leq T}\Big|y_{i}(t)-\bar{y}_{i}(t)\Big|^2=O\Big(\frac{1}{N}+\varepsilon_N^2\Big).
\]
\end{lemma}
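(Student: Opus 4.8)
The plan is to imitate the proof of Lemma \ref{lemma for x and xi}, working with the difference between the realized perturbed system (\ref{perturbed major}) and the limiting perturbed system (\ref{perturbed major limit}). The crucial structural observation is that both systems are driven by \emph{identical} control inputs: the major agent uses the same alternative control $u_0$ in both, and each minor agent uses the same $\varphi_{\theta_i}(\bar{p}_i,\bar{q}_i)$ in both. Consequently, when I subtract the two systems, all the control terms $B_0 u_0$, $D_0 u_0$, $B\varphi_{\theta_i}$, $D_{\theta_i}\varphi_{\theta_i}$ cancel exactly, together with the deterministic inhomogeneities $b_0,\sigma_0$. The only remaining sources in the difference equations are the mean-field discrepancies $y^{(N)}-\Phi$ (and, for the minor agents, the propagated major-agent error $y_0-\bar{y}_0$). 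This reduces everything to controlling $y^{(N)}-\Phi$, which is exactly the content of Lemma \ref{perturbed major lemma 1}.

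First I would treat the major agent. Subtracting the $\bar{y}_0$-equation of (\ref{perturbed major limit}) from the $y_0$-equation of (\ref{perturbed major}) gives the linear SDE
\[
\left\{
\begin{aligned}
d(y_{0}-\bar{y}_{0})&=\big[A_0(y_{0}-\bar{y}_{0})+F_0^1(y^{(N)}-\Phi)\big]dt+\big[C_0(y_{0}-\bar{y}_{0})+F_0^2(y^{(N)}-\Phi)\big]dW_0(t),\\
(y_{0}-\bar{y}_{0})(0)&=0.
\end{aligned}
\right.
\]
Applying the classical $L^2$-estimate for SDEs (using \textbf{(A2)} to bound $A_0,C_0,F_0^1,F_0^2$) followed by Gronwall's inequality yields
\[
\mathbb{E}\sup_{0\leq t\leq T}\big|y_{0}(t)-\bar{y}_{0}(t)\big|^2\leq M\,\mathbb{E}\int_0^T\big|y^{(N)}(s)-\Phi(s)\big|^2ds,
\]
and Lemma \ref{perturbed major lemma 1} converts the right-hand side into $O\big(\tfrac{1}{N}+\varepsilon_N^2\big)$.

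Next I would handle a generic minor agent $i$. Subtracting the $\bar{y}_i$-equation from the $y_i$-equation produces
\[
\left\{
\begin{aligned}
d(y_{i}-\bar{y}_{i})&=\big[A_{\theta_i}(y_{i}-\bar{y}_{i})+F_1(y^{(N)}-\Phi)\big]dt\\
&\quad+\big[C(y_{i}-\bar{y}_{i})+F_2(y^{(N)}-\Phi)+H(y_0-\bar{y}_0)\big]dW_i(t),\\
(y_{i}-\bar{y}_{i})(0)&=0,
\end{aligned}
\right.
\]
and the same SDE estimate together with Gronwall gives
\[
\mathbb{E}\sup_{0\leq t\leq T}\big|y_{i}(t)-\bar{y}_{i}(t)\big|^2\leq M\,\mathbb{E}\int_0^T\Big(\big|y^{(N)}(s)-\Phi(s)\big|^2+\big|y_0(s)-\bar{y}_0(s)\big|^2\Big)ds.
\]
Invoking Lemma \ref{perturbed major lemma 1} for the first term and the major-agent estimate just obtained for the second, both of which are $O\big(\tfrac{1}{N}+\varepsilon_N^2\big)$ with a constant $M$ independent of $N$ and of $i$, I conclude that each minor-agent difference is $O\big(\tfrac{1}{N}+\varepsilon_N^2\big)$ uniformly in $i$. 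Taking the supremum over $0\leq i\leq N$ and combining with the major-agent bound finishes the proof.

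I do not expect a genuine obstacle here: the argument is routine once the control-cancellation is noticed, and the only nontrivial input is Lemma \ref{perturbed major lemma 1}. The mildest point requiring care is ensuring the constant $M$ in the SDE estimates is uniform in $i$ and $N$, which follows because all coefficients are bounded by \textbf{(A2)} and the error is driven solely by $y^{(N)}-\Phi$ and $y_0-\bar{y}_0$; no moment bounds on the controls $\varphi_{\theta_i}(\bar{p}_i,\bar{q}_i)$ or $u_0$ enter, precisely because those terms have cancelled.
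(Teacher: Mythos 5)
Your proposal is correct and follows essentially the same route as the paper: subtract the two systems so the identical control inputs cancel, reducing the differences to linear SDEs driven by $y^{(N)}-\Phi$ (and, for the minor agents, $y_0-\bar y_0$), then apply the classical SDE estimate with Gronwall and invoke Lemma \ref{perturbed major lemma 1}. No gaps.
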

\begin{proof}
From both the first equation of (\ref{perturbed major}) and (\ref{perturbed major limit}), we obtain
\[
\left\{
\begin{aligned}
&d(y_0-\bar{y}_0)=\left[A(y_0-\bar{y}_0)+F_0^1(y^{(N)}-\Phi)\right]dt
+\left[C(y_0-\bar{y}_0)+F_0^2(y^{(N)}-\Phi)\right]dW_0(t),\\
&y_0(0)-\bar{y}_0(0)=0.
\end{aligned}
\right.
\]
With the help of classical estimates of SDE and  Lemma \ref{perturbed major lemma 1},
it is easy to obtain
\begin{equation}\label{perturbed major agent difference}
\mathbb{E}\sup_{0\leq t\leq T}\Big|y_0(t)-\bar{y}_0(t)\Big|^2=O\Big(\frac{1}{N}+\varepsilon_N^2\Big).
\end{equation}
Now, for any $1\leq i\leq N$, from both the second equation of (\ref{perturbed major}) and (\ref{perturbed major limit}), we get
\[
\begin{aligned}
d(y_i-\bar{y}_i)&=\left[A_{\theta_i}(y_i-\bar{y}_i)+F_1(y^{(N)}-\Phi)\right]dt\\
&\qquad+\left[C(y_i-\bar{y}_i)+F_2(y^{(N)}-\Phi)+H(y_0-\bar{y}_0)\right]dW_i(t),\qquad
y_i(0)-\bar{y}_i(0)=0.
\end{aligned}
\]
The classical estimates of SDE, Lemma \ref{perturbed major lemma 1} and (\ref{perturbed major agent difference}) allow us to complete the proof.
\end{proof}
\begin{lemma}\label{perturbed major lemma 3}
Under \emph{\textbf{(A1)-(A4)}}, for the major agent's perturbation control $u_{0}$, we have
\[
\Big|\mathcal{J}_0({u}_0, \bar{u}_{-0})-J_0({u}_0)\Big|=O\Big(\frac{1}{\sqrt{N}}+\varepsilon_N\Big).
\]
\end{lemma}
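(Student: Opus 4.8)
The plan is to follow verbatim the major-agent argument used in the proof of Lemma \ref{first lemma for cost}, now with the \emph{perturbed} states: the realized states $(y_0,y^{(N)})$ solving (\ref{perturbed major}) play the role of $(\breve{x}_0,\breve{x}^{(N)})$, and the decentralized limiting states $(\bar{y}_0,\Phi)$ solving (\ref{perturbed major limit}) play the role of $(\bar{x}_0,\Phi)$. First I would write out both functionals explicitly from (\ref{major agent cost functional}), (\ref{major agent cost functional limit}) and (\ref{sum of mk}), and observe that $\mathcal{J}_0(u_0,\bar{u}_{-0})$ and $J_0(u_0)$ contain the identical control-penalty term $\frac{1}{2}\mathbb{E}\int_0^T\langle R_0 u_0,u_0\rangle\,dt$, which therefore cancels in the difference. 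What remains is precisely the analogue of (\ref{ee7}): a running integral of the difference of the two quadratic forms $\langle Q_0(\cdot),\cdot\rangle$ plus the terminal difference of $\langle G_0(\cdot),\cdot\rangle$, evaluated at $(y_0-\rho_0 y^{(N)})$ against $(\bar{y}_0-\rho_0\Phi)$.

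Next I would treat the running and terminal terms identically via the polarization-type identity
\[
\langle Q_0(a-b),a-b\rangle-\langle Q_0(c-d),c-d\rangle=\langle Q_0(e),e\rangle+2\langle Q_0(e),c-d\rangle, \quad e:=(a-b)-(c-d),
\]
with $a=y_0$, $b=\rho_0 y^{(N)}$, $c=\bar{y}_0$, $d=\rho_0\Phi$, so that $e=(y_0-\bar{y}_0)-\rho_0(y^{(N)}-\Phi)$. Using $(x+y)^2\le 2x^2+2y^2$ together with Lemma \ref{perturbed major lemma 1} and the $i=0$ case of Lemma \ref{perturbed major lemma 2} gives $\mathbb{E}|e|^2=O(1/N+\varepsilon_N^2)$, so the pure quadratic piece $\langle Q_0(e),e\rangle$ is $O(1/N+\varepsilon_N^2)$. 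For the cross term, the classical SDE estimate for (\ref{perturbed major limit}) yields $\mathbb{E}\sup_t|\bar{y}_0|^2\le M$ and (\ref{bounded}) with (\ref{sum of mk}) yields $\mathbb{E}\sup_t|\Phi|^2\le M$, hence $\mathbb{E}|c-d|^2\le M$, so Cauchy-Schwarz bounds it by $M(\mathbb{E}|e|^2)^{1/2}(\mathbb{E}|c-d|^2)^{1/2}=O(1/\sqrt{N}+\varepsilon_N)$. The terminal term is handled the same way with $G_0$ in place of $Q_0$, using $\mathbb{E}|y_0(T)-\bar{y}_0(T)|^2$ and $\mathbb{E}|y^{(N)}(T)-\Phi(T)|^2$ from the same two lemmas. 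Summing the contributions and recalling that $Q_0,G_0\ge 0$ are bounded by \textbf{(A3)} gives the claimed rate.

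The computation is entirely routine once Lemmas \ref{perturbed major lemma 1} and \ref{perturbed major lemma 2} are in hand; indeed those two lemmas (together with (\ref{boundedness of yi})) do all the work, and the present lemma is essentially their corollary. The only point deserving care is that the dominant rate is the slower $O(1/\sqrt{N}+\varepsilon_N)$ coming from the cross term: Cauchy-Schwarz converts the $L^2$-rate $O(1/N+\varepsilon_N^2)$ of $e$ into its square root when it is paired against the merely bounded factor $c-d$, whereas the pure quadratic piece contributes only at the faster $O(1/N+\varepsilon_N^2)$ scale. No monotonicity or contraction input is needed in this step — it relies solely on the a priori state estimates and elementary inequalities.
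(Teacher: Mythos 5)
Your proposal is correct and follows essentially the same route as the paper: the paper likewise writes the difference as in its display (\ref{major agent ee12}) (with the control-penalty term cancelling), applies the same polarization-type decomposition already used for Lemma \ref{first lemma for cost}, and then invokes Lemmas \ref{perturbed major lemma 1} and \ref{perturbed major lemma 2} together with the bound $\mathbb{E}\left(|\bar{y}_0(t)|^2+|\Phi(t)|^2\right)\leq M$ and Cauchy--Schwarz to obtain the $O\big(\tfrac{1}{\sqrt{N}}+\varepsilon_N\big)$ rate from the cross term. Your observation that the square-root rate is forced by the cross term, while the pure quadratic piece is $O\big(\tfrac{1}{N}+\varepsilon_N^2\big)$, matches the paper's computation exactly.
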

\begin{proof}
Recall (\ref{major agent cost functional}), (\ref{major agent cost functional limit}) and (\ref{sum of mk}), we have
\begin{equation}\label{major agent ee12}
\begin{aligned}
&\mathcal{J}_0({u}_0, \bar{u}_{-0})-J_0({u}_0)=\frac{1}{2}\mathbb{E}\bigg [ \int_0^T\Big( \Big <Q_0\big(y_0-\rho_0y^{(N)}\big),y_0-\rho_0y^{(N)}\Big>\!-\!\Big <Q_0\big(\bar{y}_0
-\rho_0\Phi\big),\bar{y}_0-\rho_0\Phi\Big>\Big)dt\\
&\!+\!\Big<G_0\big(y_0(T)\!-\!\rho_0y^{(N)}(T)\big),y_0(T)\!-\!\rho_0y^{(N)}(T)\Big>\!-\!\Big<G_0\big(\bar{y}_0(T)
\!-\!\rho_0\Phi(T)\big),\bar{y}_0(T)\!-\!\rho_0\Phi(T)\Big>\bigg].
\end{aligned}
\end{equation}
Similar to Lemma \ref{first lemma for cost}, by using Lemmas \ref{perturbed major lemma 1}, \ref{perturbed major lemma 2} and $\mathbb{E}\left(\left|\bar{y}_0(t)\right|^2+\left|\Phi(t)\right|^2\right)\leq M$, we have
\[
\begin{aligned}
&\Bigg|\mathbb{E} \int_0^T\Big( \Big <Q_0\big(y_0-\rho_0y^{(N)}\big),y_0-\rho_0y^{(N)}\Big>-\Big <Q_0\big(\bar{y}_0
-\rho_0\Phi\big),\bar{y}_0-\rho_0\Phi\Big>\Big)dt\Bigg|\\
\leq &M\int_0^T\mathbb{E}\left|y_0-\bar{y}_0\right|^2dt
+M\int_0^T\mathbb{E}\left|y^{(N)}-\Phi\right|^2dt+M\int_0^T\left(\mathbb{E}\left|y_0-\bar{y}_0\right|^2
+\mathbb{E}\left|y^{(N)}-\Phi\right|^2\right)^{\frac{1}{2}}dt\\
=&O\Big(\frac{1}{\sqrt{N}}+\varepsilon_N\Big),
\end{aligned}
\]
and
\[
\begin{aligned}
&\Bigg|\mathbb{E}\bigg[\Big<G_0\big(y_0(T)-\rho_0y^{(N)}(T)\big),y_0(T)-\rho_0y^{(N)}(T)\Big>\\
&\qquad\qquad-\Big<G_0\big(\bar{y}_0(T)
-\rho_0\Phi(T)\big),\bar{y}_0(T)-\rho_0\Phi(T)\Big>\bigg]\Bigg|
=O\Big(\frac{1}{\sqrt{N}}+\varepsilon_N\Big).
\end{aligned}
\]
The proof is completed by noticing (\ref{major agent ee12}).
\end{proof}
\begin{theorem}\label{major Nash equilibrium theorem}
Under the assumptions of \emph{\textbf{(A1)-(A4)}},
then the strategies set $(\bar{u}_0,\bar{u}_1,\cdots,\bar{u}_N)$ is an $\varepsilon$-Nash equilibrium of \emph{\textbf{Problem (CC)}} for the major agent.
\end{theorem}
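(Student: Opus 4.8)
The plan is to obtain the defining inequality of Definition \ref{epsilon-Nash equilibrium def} for $i=0$ by chaining the two cost-approximation estimates already in hand with the optimality of $\bar u_0$ for the limiting problem. First I would note that the inequality to be proved holds trivially whenever $\mathcal{J}_0(u_0,\bar u_{-0})\ge\mathcal{J}_0(\bar u_0,\bar u_{-0})$, so it suffices to restrict attention to perturbations $u_0\in\mathcal{U}_{ad}^0$ with $\mathcal{J}_0(u_0,\bar u_{-0})\le\mathcal{J}_0(\bar u_0,\bar u_{-0})$. For exactly such controls the a priori bound $\mathbb{E}\int_0^T|u_0(t)|^2\,dt\le M$ with $M$ independent of $N$, derived just before Lemma \ref{perturbed major lemma 1} using \textbf{(A4)}, is available; this is what makes Lemmas \ref{perturbed major lemma 1}--\ref{perturbed major lemma 3} applicable with constants uniform in $N$.

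The key structural input is that $\bar u_0=\varphi_0(\bar p_0,\bar q_0)$ is precisely the decentralized optimal control for \textbf{Problem (LCC-Major)}. Indeed, as already observed, $(\bar x_0,\bar p_0,\bar q_0)=(\alpha_0,\beta_0,\gamma_0)$ solves the major agent's Hamiltonian system (\ref{major Hamiltonian system}), which was itself built from the stochastic maximum principle. Since \textbf{Problem (LCC-Major)} is a convex LQ problem with a convex input constraint (using $Q_0\ge0$, $G_0\ge0$, $R_0>0$ from \textbf{(A3)}), the maximum principle is also sufficient, so $J_0(\bar u_0)=\inf_{u_0\in\mathcal{U}_{ad}^0}J_0(u_0)$. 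In particular $J_0(\bar u_0)\le J_0(u_0)$ for every admissible perturbation $u_0$, where both functionals are evaluated against the same frozen limit $\Phi=\sum_{k=1}^K\pi_k\mathbb{E}(\alpha_k\mid\mathcal{F}^{W_0}_{\cdot})$ used in Lemmas \ref{first lemma for cost} and \ref{perturbed major lemma 3}.

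With these two facts I would assemble the estimate. Lemma \ref{first lemma for cost} in the case $i=0$ gives $\mathcal{J}_0(\bar u_0,\bar u_{-0})\le J_0(\bar u_0)+O(N^{-1/2}+\varepsilon_N)$, and Lemma \ref{perturbed major lemma 3} gives $J_0(u_0)\le\mathcal{J}_0(u_0,\bar u_{-0})+O(N^{-1/2}+\varepsilon_N)$. Inserting the optimality inequality in between yields
\begin{align*}
\mathcal{J}_0(\bar u_0,\bar u_{-0})
&\le J_0(\bar u_0)+O\left(N^{-1/2}+\varepsilon_N\right)\\
&\le J_0(u_0)+O\left(N^{-1/2}+\varepsilon_N\right)\\
&\le \mathcal{J}_0(u_0,\bar u_{-0})+O\left(N^{-1/2}+\varepsilon_N\right).
\end{align*}
Setting $\varepsilon=\varepsilon(N)=O(N^{-1/2}+\varepsilon_N)$ and invoking \textbf{(A1)}, which forces $\varepsilon_N=\sup_{1\le k\le K}|\pi_k^{(N)}-\pi_k|\to0$, gives $\lim_{N\to\infty}\varepsilon(N)=0$ and completes the proof.

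I do not expect a single hard estimate here; the substantive analytic effort has already been spent in the propagation-of-chaos--type Lemmas \ref{lemma for xN}--\ref{perturbed major lemma 3} bounding $\breve x^{(N)}-\Phi$ and $y^{(N)}-\Phi$. The one point requiring genuine care is the \emph{uniformity in $N$} of all constants: the entire scheme closes only because the reduction to perturbations with $\mathcal{J}_0(u_0,\bar u_{-0})\le\mathcal{J}_0(\bar u_0,\bar u_{-0})$ produces an $N$-independent bound on $u_0$, so that each $O(N^{-1/2}+\varepsilon_N)$ remainder is controlled by a constant depending neither on the chosen perturbation nor on $N$. Once that is secured, the theorem is a clean assembly of the preceding lemmas and the optimality of $\bar u_0$.
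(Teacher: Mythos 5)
Your proposal is correct and follows essentially the same route as the paper: the reduction to perturbations with $\mathcal{J}_0(u_0,\bar u_{-0})\le\mathcal{J}_0(\bar u_0,\bar u_{-0})$ (yielding the $N$-independent bound on $u_0$ via \textbf{(A4)}), followed by the chain $\mathcal{J}_0(\bar u_0,\bar u_{-0})\le J_0(\bar u_0)+O(N^{-1/2}+\varepsilon_N)\le J_0(u_0)+O(N^{-1/2}+\varepsilon_N)\le \mathcal{J}_0(u_0,\bar u_{-0})+O(N^{-1/2}+\varepsilon_N)$ using Lemma \ref{first lemma for cost}, the optimality $J_0(\bar u_0)=\inf_{u_0\in\mathcal{U}_{ad}^0}J_0(u_0)$, and Lemma \ref{perturbed major lemma 3}. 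Your added remark on the sufficiency of the maximum principle in the convex setting is a point the paper leaves implicit, but the argument is otherwise identical.
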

\begin{proof}
Combining Lemma \ref{first lemma for cost} and Lemma \ref{perturbed major lemma 3}, we have
\[
\begin{aligned}
\mathcal{J}_0(\bar{u}_0, \bar{u}_{-0})
\!\leq\!J_0(\bar{u}_0)\!+\!O\Big(\frac{1}{\sqrt{N}}\!+\!\varepsilon_N\Big)
\leq J_0({u}_0)\!+\!O\Big(\frac{1}{\sqrt{N}}+\varepsilon_N\Big)
\!\leq\!\mathcal{J}_0({u}_0, \bar{u}_{-0})\!+\!O\Big(\frac{1}{\sqrt{N}}\!+\!\varepsilon_N\Big),
\end{aligned}
\]
where the second inequality comes from the fact that $J_0(\bar{u}_0)=\inf_{u_0\in\mathcal{U}_{ad}^0}J_0(u_0)$.
Consequently, Theorem \ref{major Nash equilibrium theorem} holds with $\varepsilon=O\Big(\frac{1}{\sqrt{N}}+\varepsilon_N\Big)$.
\end{proof}
\subsection{Minor agent's perturbation}
Now, let us consider the following case: a given minor agent $\mathcal{A}_i$ uses an alternative strategy $u_i\in\mathcal{U}_{ad}^{i},$ the major agent uses $\bar{u}_0=\varphi_{0}(\bar{p}_0,\bar{q}_0)$ while other minor agents $\mathcal{A}_j$ use the control $\bar{u}_j=\varphi_{\theta_j}(\bar{p}_j,\bar{q}_j)$, $j\neq i$, $1\leq j\leq N$, where $(\bar{p}_j,\bar{q}_j)$, $0\leq j\leq N$, $j\neq i$, are solved from (\ref{limiting state under cc condition}). Then the realized state system with the minor agent's perturbation is, for $1\leq j\leq N$, $j\neq i$,
  \begin{equation}\label{perturbed minor}
\left\{
\begin{aligned}
&dl_{0}=\left[A_0l_{0}\!+\!B_0\varphi_{0}(\bar{p}_0,\bar{q}_0)\!+\!F_0^1l^{(N)}\!+\!b_0\right]dt
        \!+\!\left[C_0l_{0}\!+\!D_0\varphi_{0}(\bar{p}_0,\bar{q}_0)
      +F_0^2l^{(N)}\!+\!\sigma_0\right]dW_0(t)\\
      &dl_{i}=\left[A_{\theta_i}l_{i}\!+\!Bu_i\!+\!F_1l^{(N)}
      \!+\!b_0\right]dt\!+\!\left[Cl_{i}\!+\!D_{\theta_i}u_i
      +F_2l^{(N)}\!+\!Hl_0\!+\!\sigma_0\right]dW_i(t),\\
      &dl_{j}=\left[A_{\theta_j}l_{j}\!+\!B\varphi_{\theta_j}(\bar{p}_j,\bar{q}_j)\!+\!F_1l^{(N)}
      \!+\!b_0\right]dt\!+\!\left[Cl_{j}\!+\!D_{\theta_j}\varphi_{\theta_j}(\bar{p}_j,\bar{q}_j)
      +F_2l^{(N)}\!+\!Hl_0\!+\!\sigma_0\right]\!dW_i(t),\\
      &l_{0}(0)=x_0, \quad l_{i}(0)=l_{j}(0)=x,
    \end{aligned}
    \right.
\end{equation}
where $l^{(N)}=\frac{1}{N}\sum_{i=1}^{N}l^{i}$. The well-posedness of above SDEs system is easily to obtain. Similar to the argument of major agent, to prove   $(\bar{u}_0,\bar{u}_1,\ldots,\bar{u}_N)$ is an $\varepsilon$-Nash equilibrium for the minor agent, noticing $Q\ge0$, $G\ge0$, $R_{\theta_i}>0$ and Lemma \ref{first lemma for cost}, we only need to consider the perturbation $u_i\in\mathcal{U}_{ad}^{i}$ satisfying
\begin{equation}\label{boundedness of minor control}
\mathbb{E}\int_0^T|u_i(t)|^2dt\leq M,
\end{equation}
where $M$ is a constant independent of $N$. Similar to Lemma \ref{lemma for xN},
we can show that
\begin{equation}\label{boundedness of li}
\sup_{0\leq i\leq N}\mathbb{E}\sup_{0\leq t\leq T}|l_i(t)|^2\leq M.
\end{equation}
We first present the following lemma
\begin{lemma}\label{perturbed minor lemma 1}
Under the assumptions of \emph{\textbf{(A1)-(A4)}}, we have
\[
\mathbb{E}\sup_{0\leq t\leq T}\Big|l^{(N)}(t)-\Phi(t)\Big|^2=O\Big(\frac{1}{N}+\varepsilon_N^2\Big).
\]
\end{lemma}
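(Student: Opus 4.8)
The plan is to follow the proof of Lemma~\ref{lemma for xN and m} almost verbatim, the only new feature being that the single distinguished agent $\mathcal{A}_i$ now uses the perturbed control $u_i$ in place of $\varphi_{\theta_i}(\bar p_i,\bar q_i)$. For each fixed $1\leq k\leq K$ I would set $l^{(k)}:=\frac{1}{N_k}\sum_{j\in\mathcal{I}_k}l_j$ and write its dynamics by summing the corresponding equations of (\ref{perturbed minor}) and dividing by $N_k$; this coincides with (\ref{k type sum}) except that, when $i\in\mathcal{I}_k$, the empirical control average equals $\frac{1}{N_k}\big(u_i+\sum_{j\in\mathcal{I}_k,\,j\neq i}\varphi_k(\bar p_j,\bar q_j)\big)$. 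Recalling the dynamics (\ref{mk}) of $m_k=\mathbb{E}(\alpha_k(t)|\mathcal{F}_t^{W_0})$ and putting $\Delta_k:=l^{(k)}-m_k$, I obtain an SDE for $\Delta_k$ of the same form as in Lemma~\ref{lemma for xN and m}. For every type $k\neq\theta_i$ the average $l^{(k)}$ is unperturbed and the estimate is literally that of Lemma~\ref{lemma for xN and m}, so the only case needing extra attention is $k=\theta_i$.

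For $k=\theta_i$ the key algebraic step is to isolate the perturbed agent by writing the control discrepancy as
\[
\frac{1}{N_k}\sum_{j\in\mathcal{I}_k,\,j\neq i}\varphi_k(\bar p_j,\bar q_j)+\frac{1}{N_k}u_i
-\mathbb{E}\big(\varphi_k(\bar p_j,\bar q_j)\,\big|\,\mathcal{F}_t^{W_0}\big)
=\underbrace{\Big(\frac{1}{N_k}\sum_{j\in\mathcal{I}_k}\varphi_k(\bar p_j,\bar q_j)
-\mathbb{E}\big(\varphi_k\,\big|\,\mathcal{F}_t^{W_0}\big)\Big)}_{\mathrm{(I)}}
+\underbrace{\frac{1}{N_k}\big(u_i-\varphi_k(\bar p_i,\bar q_i)\big)}_{\mathrm{(II)}}.
\]
Term $\mathrm{(I)}$ is precisely the quantity estimated in Lemma~\ref{lemma for xN and m}: the conditional independence and identical distribution of $(\bar p_j,\bar q_j)$, $j\in\mathcal{I}_k$, under $\mathbb{E}(\cdot|\mathcal{F}_s^{W_0})$ together with (\ref{bounded}) give $\mathbb{E}\int_0^T|\mathrm{(I)}|^2ds=O(1/N_k)$. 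Term $\mathrm{(II)}$ is controlled directly: by (\ref{boundedness of minor control}) and (\ref{bounded}), $\mathbb{E}\int_0^T|\mathrm{(II)}|^2ds\leq \frac{M}{N_k^2}=O(1/N^2)$, hence it is negligible. The martingale part of $\Delta_k$ is handled with the Burkholder--Davis--Gundy inequality exactly as in (\ref{ee3}); using the $L^2$-bounds (\ref{bounded}) and (\ref{boundedness of li}) (the latter also covers the perturbed summand $D_{\theta_i}u_i$ and the $l_0$-term) together with the identical distribution of the unperturbed summands, this part contributes $O(1/N_k)$.

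Collecting these bounds and invoking Gronwall's inequality gives $\mathbb{E}\sup_{0\leq s\leq t}|\Delta_k(s)|^2\leq M\int_0^t\mathbb{E}|l^{(N)}(s)-\Phi(s)|^2ds+M/N_k$, uniformly in $k$. Decomposing $l^{(N)}-\Phi=\sum_{k=1}^{K}\pi_k^{(N)}\Delta_k+\sum_{k=1}^{K}(\pi_k^{(N)}-\pi_k)\mathbb{E}(\alpha_k|\mathcal{F}^{W_0})$ as in Lemma~\ref{lemma for xN and m}, then using $\pi_k^{(N)}\leq1$, the bound on $\alpha_k$ from (\ref{bounded}) and $\varepsilon_N=\sup_{k}|\pi_k^{(N)}-\pi_k|$, a second application of Gronwall's inequality yields the claimed rate $O(1/N+\varepsilon_N^2)$. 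I expect the only genuinely new point to be the treatment of term $\mathrm{(II)}$: since $u_i$ is an arbitrary admissible control it is neither identically distributed with nor conditionally independent of the other summands, so the concentration argument behind $\mathrm{(I)}$ does not apply to it; the remedy is to exploit that it enters with the small weight $1/N_k$ and is merely $L^2$-bounded via (\ref{boundedness of minor control}), which already renders its contribution of lower order.
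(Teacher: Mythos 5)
Your proposal is correct and follows essentially the same route as the paper: treat the types $k\neq\theta_i$ exactly as in Lemma \ref{lemma for xN and m}, isolate the perturbed agent in the type containing $i$, use the conditional i.i.d. concentration for the unperturbed summands and the $L^2$-bound (\ref{boundedness of minor control}) with the $1/N_k$ weight for the perturbation, then conclude by BDG and two applications of Gronwall. The only (immaterial) difference is the algebraic splitting of the control discrepancy — you add and subtract $\tfrac{1}{N_k}\varphi_k(\bar p_i,\bar q_i)$ so that term $\mathrm{(I)}$ is literally the quantity of Lemma \ref{lemma for xN and m}, whereas the paper compares $\tfrac{1}{N_{\bar k}}\sum_{j\neq i}\varphi_{\bar k}(\bar p_j,\bar q_j)$ with $\tfrac{N_{\bar k}-1}{N_{\bar k}}\mu$ and keeps $\tfrac{1}{N_{\bar k}}u_i$ separate; both give the same $O(1/N_k)$ bound.
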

\begin{proof}
We know that for each fixed $i$, there exists a unique $1\leq \bar{k}\leq K$, such that $i\in\mathcal{I}_{\bar{k}}$. Let us denote $l^{(k)}:=\frac{1}{N_k}\sum_{i\in\mathcal{I}_k}l_i$, $1\leq k\leq K$. We first consider the $k$-type minor agents, where $k\neq \bar{k}$.  Adding up their states and then divided by $N_k$, we have for $k\neq \bar{k}$,
\[
   \begin{aligned}
      dl^{(k)}&=\left[A_{k}l^{(k)}\!+\!\frac{B}{N_k}\sum_{i\in\mathcal{I}_k}\varphi_k(\bar{p}_i,\bar{q}_i)
      \!+\!F_1l^{(N)}\!+\!b_0\right]dt\\
        &\qquad\!+\!\frac{1}{N_k}\sum_{i\in\mathcal{I}_k}\left[Cl_{i}\!+\!D_{k}\varphi_k(\bar{p}_i,\bar{q}_i)
      +F_2l^{(N)}\!+\!Hl_0\!+\!\sigma_0\right]dW_i(t),\qquad
     l^{(k)}(0)=x.
    \end{aligned}
\]
Similar to the proof of Lemma \ref{lemma for xN and m}, for $m_k=\mathbb{E}(\alpha_k(t)|\mathcal{F}_t^{W_0})$, we have
\begin{equation}\label{minor k other type sum}
\begin{aligned}
\mathbb{E}\sup_{0\leq s\leq t}|l^{(k)}(s)-m_k(s)|^2
\leq  M\mathbb{E}\int_0^t\Big[|l^{(N)}(s)-\Phi(s)|^2\Big]ds
+\frac{M}{N_k}.
 \end{aligned}
\end{equation}
Now let us focus on the $\bar{k}$-type minor agents, we have
\[
   \begin{aligned}
      dl^{(\bar{k})}&=\left[A_{{\bar{k}}}l^{({\bar{k}})}\!+\!\frac{B}{N_{\bar{k}}}u_i
      \!+\!\frac{B}{N_{\bar{k}}}\sum_{j\in\mathcal{I}_{\bar{k}},j\neq i}\varphi_{\bar{k}}(\bar{p}_j,\bar{q}_j)
      \!+\!F_1l^{(N)}\!+\!b_0\right]dt\\
        &\qquad\!+\!\frac{1}{N_{\bar{k}}}\sum_{j\in\mathcal{I}_{\bar{k}}}
        \left[Cl_{j}
      +F_2l^{(N)}\!+\!Hl_0\!+\!\sigma_0\right]dW_j(t)\\
      &\qquad\!+\!\frac{1}{N_{\bar{k}}}D_{{\bar{k}}}u_idW_i(t)
      \!+\!\frac{1}{N_{\bar{k}}}\sum_{j\in\mathcal{I}_{\bar{k}},j\neq i}D_{{\bar{k}}}\varphi_{\bar{k}}(\bar{p}_j,\bar{q}_j)
      dW_j(t),\qquad
     l^{({\bar{k}})}(0)=x.
    \end{aligned}
\]
Recalling (\ref{mk})
and if we denote  $\Xi:=l^{(\bar{k})}-m_{\bar{k}}$,  it follows that
\[
\begin{aligned}
d\Xi&=\Bigg[A\Xi\!+\!F_1(l^{(N)}-\Phi)\!+\!\frac{1}{N_{\bar{k}}}Bu_i
\!+\!\frac{B}{N_{\bar{k}}}\left(\sum_{j\in\mathcal{I}_{\bar{k}},j\neq i}
        \varphi_{\bar{k}}(\bar{p}_j,\bar{q}_j)
        -\mathbb{E}(\varphi_{\bar{k}}(\bar{p}_i,\bar{q}_i)|\mathcal{F}_t^{W_0})\right)\Bigg]dt\\
         &\qquad\!+\!\frac{1}{N_{\bar{k}}}\sum_{j\in\mathcal{I}_{\bar{k}}}
        \left[Cl_{j}
      +F_2l^{(N)}\!+\!Hl_0\!+\!\sigma_0\right]dW_j(t),\\
      &\qquad\!+\!\frac{1}{N_{\bar{k}}}D_{{\bar{k}}}u_idW_i(t)
      \!+\!\frac{1}{N_{\bar{k}}}\sum_{j\in\mathcal{I}_{\bar{k}},j\neq i}D_{{\bar{k}}}\varphi_{\bar{k}}(\bar{p}_j,\bar{q}_j)
      dW_j(t),\qquad
\Pi(0)=0.
\end{aligned}
\]
By Cauchy-Schwartz inequality and BDG inequality,
we obtain that for any $t\in[0,T]$,
\begin{equation}\label{minor ee11}
\begin{aligned}
\mathbb{E}\sup_{0\leq s\leq t}|\Xi(s)|^2&\leq M\mathbb{E}\int_0^t\left(|\Xi(s)|^2\!+\!\frac{1}{N_{\bar{k}}^2}|u_i(s)|^2\!+\!|l^{(N)}(s)-\Phi(s)|^2
\right)ds\\
&\!+\!M\mathbb{E}\int_0^t\left|\frac{1}{N_{\bar{k}}}\sum_{j\in\mathcal{I}_{\bar{k}},j\neq i}
        \varphi_{\bar{k}}(\bar{p}_j,\bar{q}_j)
        -\mathbb{E}(\varphi_{\bar{k}}(\bar{p}_i,\bar{q}_i)|\mathcal{F}_t^{W_0})\right|^2\Big]ds\\
&\!+\!\frac{M}{N_{\bar{k}}^2}\mathbb{E}\sum_{j\in\mathcal{I}_{\bar{k}}}\int_0^t\left|F_2(l^{(N)}(s)-\Phi(s))
\!+\!F_2\Phi(s)\!+\!
Cl_j(s)\!+\!Hl_0(s)\!+\!\sigma(s)\right|^2ds\\
&\!+\!\frac{M}{N_{\bar{k}}^2}\mathbb{E}\int_0^t|u_i(s)|^2ds
\!+\!\frac{M}{N_{\bar{k}}^2}\mathbb{E}\sum_{j\in\mathcal{I}_{\bar{k}},j\neq i}\int_0^t|\varphi_{\bar{k}}(\bar{p}_j(s),\bar{q}_j(s))|^2ds.
\end{aligned}
\end{equation}
On the one hand, since  for each fixed $s\in[0,T]$, under the conditional expectation $\mathbb{E}(\cdot|\mathcal{F}_s^{W_0})$, the processes $(\bar{p}_i(s),\bar{q}_i(s))$, $i\in\mathcal{I}_{\bar{k}}$, are independent identically distributed. If we denote $\mu(s)=\mathbb{E}(\varphi_{\bar{k}}(\bar{p}_i(s),\bar{q}_i(s))|\mathcal{F}_s^{W_0}))$, then $\mu$ does not depend on $i.$ Moreover,
\[
\begin{aligned}
&\mathbb{E}\left|\frac{1}{N_{\bar{k}}}\sum_{j\in\mathcal{I}_{\bar{k}},j\neq i}
        \varphi_{\bar{k}}(\bar{p}_j(s),\bar{q}_j(s))
-\mu(s)\right|^2\\
\leq&2\mathbb{E}\left|\frac{1}{N_{\bar{k}}}\sum_{j\in\mathcal{I}_{\bar{k}},j\neq i}
        \varphi_{\bar{k}}(\bar{p}_j(s),\bar{q}_j(s))
-\frac{N_{\bar{k}}-1}{N_{\bar{k}}}\mu(s)\right|^2
+2\mathbb{E}\left|\frac{1}{N_{\bar{k}}}\mu(s)\right|^2\\
=&2\frac{(N_{\bar{k}}-1)^2}{N_{\bar{k}}^2}\mathbb{E}\left|\frac{1}{N_{\bar{k}}-1}\sum_{j\in\mathcal{I}_{\bar{k}},j\neq i}\varphi_{\bar{k}}(\bar{p}_j(s),\bar{q}_j(s))
-\mu(s)\right|^2+\frac{2}{N_{\bar{k}}^2}\mathbb{E}|\mu(s)|^2.
\end{aligned}
\]
Then, due to (\ref{bounded}) and the fact that $(\bar{p}_i(s),\bar{q}_i(s))$, $i\in\mathcal{I}_{\bar{k}}$, are independent identically distributed under $\mathbb{E}(\cdot|\mathcal{F}_s^{W_0})$, similar to the proof of Lemma \ref{lemma for xN and m}, we can obtain
\[
\begin{aligned}
&\int_0^t\mathbb{E}\left|\frac{1}{N_{\bar{k}}}\sum_{j\in\mathcal{I}_{\bar{k}},j\neq i}
        \varphi_{\bar{k}}(\bar{p}_j(s),\bar{q}_j(s))
-\mu(s)\right|^2ds\\
\leq & 2\frac{(N_{\bar{k}}-1)^2}{N_{\bar{k}}^2}\int_0^t\mathbb{E}
\left|\frac{1}{N_{\bar{k}}-1}\sum_{j\in\mathcal{I}_{\bar{k}},j\neq i}\varphi_{\bar{k}}(\bar{p}_j(s),\bar{q}_j(s))
-\mu(s)\right|^2ds
+\frac{2}{N_{\bar{k}}^2}\int_0^t\mathbb{E}|\mu(s)|^2ds\\
=& 2\frac{N_{\bar{k}}-1}{N_{\bar{k}}^2}\int_0^t\mathbb{E}\left|\varphi_{\bar{k}}(\bar{p}_j(s),\bar{q}_j(s))
\!-\!\mu(s)\right|^2ds+\frac{2}{N_{\bar{k}}^2}\int_0^t\mathbb{E}|\mu(s)|^2ds
\leq \frac{M}{N_{\bar{k}}}.
\end{aligned}
\]
On the other hand, due to (\ref{boundedness of minor control}) and (\ref{boundedness of li}), we get
\[
\begin{aligned}
&\frac{M}{N_{\bar{k}}^2}\mathbb{E}\int_0^t|u_i(s)|^2ds
+\frac{M}{N_{\bar{k}}^2}\mathbb{E}\sum_{j=1}^{N}\int_0^t\left|F_2(l^{(N)}(s)-\Phi(s))\!+\!F_2\Phi(s)\!+\!
Cl_j(s)\!+\!Hl_0(s)\!+\!\sigma(s)\right|^2ds\\
\leq &\frac{M}{N_{\bar{k}}}\mathbb{E}\int_0^t|l^{(N)}(s)-\Phi(s)|^2ds\!+\!\frac{M}{N_{\bar{k}}}.
\end{aligned}
\]
Moreover, since  $(\bar{p}_i(s),\bar{q}_i(s))$, $i\in\mathcal{I}_{\bar{k}}$, are identically distributed under $\mathbb{E}(\cdot|\mathcal{F}_s^{W_0})$, we have
$
\frac{M}{N_{\bar{k}}^2}\mathbb{E}\sum\limits_{j\in\mathcal{I}_{\bar{k}},j\neq i}\int_0^t|\varphi_{\bar{k}}(\bar{p}_j(s),\bar{q}_j(s))|^2ds\leq
\frac{M}{N_{\bar{k}}}.
$
Therefore, from above estimates, we get from (\ref{minor ee11}) that, for any $t\in[0,T]$,
\[
\mathbb{E}\sup_{0\leq s\leq T}|\Xi(s)|^2\leq M\mathbb{E}\int_0^t|\Xi(s)|^2+|l^{(N)}(s)-\Phi(s)|^2\Big]ds+\frac{M}{N_{\bar{k}}},
\]
which yields, by using Gronwall's inequality, that
\begin{equation}\label{minor k type sum}
\begin{aligned}
\mathbb{E}\sup_{0\leq s\leq t}|l^{({\bar{k}})}(s)-m_{\bar{k}}(s)|^2
\leq  M\mathbb{E}\int_0^t\Big[|l^{(N)}(s)-\Phi(s)|^2\Big]ds
+\frac{M}{N_{\bar{k}}}.
 \end{aligned}
\end{equation}
Consequently, noticing (\ref{minor k other type sum}) and  (\ref{minor k type sum}), we have for each $1\leq k\leq K$,
\begin{equation}\label{minor each k type sum}
\begin{aligned}
\mathbb{E}\sup_{0\leq s\leq t}|l^{(k)}(s)-m_{k}(s)|^2
\leq  M\mathbb{E}\int_0^t\Big[|l^{(N)}(s)-\Phi(s)|^2\Big]ds
+\frac{M}{N_{k}}.
 \end{aligned}
\end{equation}
Since
\[
\begin{aligned}
l^{(N)}(s)-\Phi(s)=&\sum_{k=1}^{K}\left[\pi_k^{(N)}\frac{1}{N_k}\sum_{i\in\mathcal{I}_k}l_i(s)
-\pi_k\mathbb{E}(\alpha_k(s)|\mathcal{F}_s^{W_0})\right]\\
=&\sum_{k=1}^{K}\left[\pi_k^{(N)}\left(\frac{1}{N_k}\sum_{i\in\mathcal{I}_k}l_i(s)
-\mathbb{E}(\alpha_k(s)|\mathcal{F}_s^{W_0})\right)+
\left(\pi_k^{(N)}-\pi_k\right)\mathbb{E}(\alpha_k(s)|\mathcal{F}_s^{W_0})\right]\\
=&\sum_{k=1}^{K}\pi_k^{(N)}(l^{(k)}(s)-m_{k}(s))+\sum_{k=1}^{K}
\left(\pi_k^{(N)}-\pi_k\right)\mathbb{E}(\alpha_k(s)|\mathcal{F}_s^{W_0}),
\end{aligned}
\]
then by (\ref{bounded}),  (\ref{minor each k type sum})  and $\pi_k^{(N)}=\frac{N_k}{N}\leq 1$, we have for any $t\in[0,T]$,
\[
\begin{aligned}
\mathbb{E}\sup_{0\leq s\leq t}|l^{(N)}(s)-\Phi(s)|^2
\leq & \mathbb{E}\sum_{k=1}^{K}\sup_{0\leq s\leq t}\pi_k^{(N)}|l^{(k)}(s)-m_{k}(s)|^2+M\varepsilon_{N}^2\\
\leq & M\mathbb{E}\int_0^t\Big[|l^{(N)}(s)-\Phi(s)|^2\Big]ds+\frac{M}{N}+M\varepsilon_{N}^2.
\end{aligned}
\]
Finally, by using Gronwall's inequality, we complete the proof.
\end{proof}
\medskip

Now, we introduce the following system of decentralized limiting state  with the perturbation strategy of minor agent $\mathcal{A}_i$: for $1\leq j\leq N$, $j\neq i$,
  \begin{equation}\label{perturbed minor limit}
\left\{
\begin{aligned}
d\bar{l}_{0}&=\left[A_0\bar{l}_{0}\!+\!B_0\varphi_{0}(\bar{p}_0,\bar{q}_0)\!+\!F_0^1\Phi\!+\!b_0\right]dt
        \!+\!\left[C_0\bar{l}_{0}\!+\!D_0\varphi_{0}(\bar{p}_0,\bar{q}_0)
      +F_0^2\Phi\!+\!\sigma_0\right]dW_0(t)\\
d\bar{l}_{i}&=\left[A_{\theta_i}\bar{l}_{i}\!+\!Bu_i\!+\!F_1\Phi
      \!+\!b_0\right]dt\!+\!\left[C\bar{l}_{i}\!+\!D_{\theta_i}u_i
      +F_2\Phi\!+\!H\bar{l}_0\!+\!\sigma_0\right]dW_i(t),\\
d\bar{l}_{j}&=\left[A_{\theta_j}\bar{l}_{j}\!+\!B\varphi_{\theta_j}(\bar{p}_j,\bar{q}_j)
      \!+\!F_1\Phi
      \!+\!b_0\right]dt\!+\!\left[C\bar{l}_{j}\!+\!D_{\theta_j}\varphi_{\theta_j}(\bar{p}_j,\bar{q}_j)
      +F_2\Phi\!+\!H\bar{l}_0\!+\!\sigma_0\right]dW_i(t),\\
      \bar{l}_{0}(0)&=x_0, \quad \bar{l}_{i}(0)=\bar{l}_{j}(0)=x.
    \end{aligned}
    \right.
\end{equation}
\begin{lemma}\label{perturbed minor lemma 3}
Under the assumptions of \emph{\textbf{(A1)-(A4)}}, we have
\begin{align}\label{perturbed minor lemma 3 estimate}
\mathbb{E}\sup_{0\leq t\leq T}\left(\Big|l_{0}(t)-\bar{l}_{0}(t)\Big|^2+\Big|l_{i}(t)-\bar{l}_{i}(t)\Big|^2
\right)=O\Big(\frac{1}{N}+\varepsilon_N^2\Big).
\end{align}
\end{lemma}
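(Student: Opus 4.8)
The plan is to subtract the realized dynamics (\ref{perturbed minor}) from the limiting dynamics (\ref{perturbed minor limit}) componentwise and to exploit the key structural observation that the perturbation control $u_i$, together with the frozen controls $\varphi_0(\bar{p}_0,\bar{q}_0)$ and $\varphi_{\theta_j}(\bar{p}_j,\bar{q}_j)$ and the constant terms $b_0,\sigma_0$, appears identically in both systems and therefore cancels in the difference. Consequently the difference processes satisfy linear SDEs with zero initial data, driven only by the discrepancy $l^{(N)}-\Phi$ between the empirical average and its limit (and, in the case of the minor agent, additionally by the major agent's difference $l_0-\bar{l}_0$). This reduces the lemma to a pair of classical stability estimates together with Lemma \ref{perturbed minor lemma 1}.

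First I would treat the major agent. Subtracting the first equation of (\ref{perturbed minor}) from the first equation of (\ref{perturbed minor limit}) gives
\[
\left\{
\begin{aligned}
&d(l_0-\bar{l}_0)=\big[A_0(l_0-\bar{l}_0)+F_0^1(l^{(N)}-\Phi)\big]dt+\big[C_0(l_0-\bar{l}_0)+F_0^2(l^{(N)}-\Phi)\big]dW_0(t),\\
&l_0(0)-\bar{l}_0(0)=0.
\end{aligned}
\right.
\]
Applying the classical $L^2$-estimate for linear SDEs (It\^{o}'s formula, the boundedness of $A_0,C_0,F_0^1,F_0^2$ from \textbf{(A2)}, the Burkholder--Davis--Gundy inequality and Gronwall's inequality) yields
\[
\mathbb{E}\sup_{0\leq t\leq T}\big|l_0(t)-\bar{l}_0(t)\big|^2\leq M\,\mathbb{E}\int_0^T\big|l^{(N)}(s)-\Phi(s)\big|^2ds,
\]
and invoking Lemma \ref{perturbed minor lemma 1} bounds the right-hand side by $O(1/N+\varepsilon_N^2)$.

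Next I would treat the perturbed minor agent $\mathcal{A}_i$. Subtracting the second equation of (\ref{perturbed minor}) from the second equation of (\ref{perturbed minor limit}), the terms $Bu_i$ and $D_{\theta_i}u_i$ cancel and one obtains
\[
\left\{
\begin{aligned}
&d(l_i-\bar{l}_i)=\big[A_{\theta_i}(l_i-\bar{l}_i)+F_1(l^{(N)}-\Phi)\big]dt+\big[C(l_i-\bar{l}_i)+F_2(l^{(N)}-\Phi)+H(l_0-\bar{l}_0)\big]dW_i(t),\\
&l_i(0)-\bar{l}_i(0)=0.
\end{aligned}
\right.
\]
The same $L^2$-estimate then gives
\[
\mathbb{E}\sup_{0\leq t\leq T}\big|l_i(t)-\bar{l}_i(t)\big|^2\leq M\,\mathbb{E}\int_0^T\big(\big|l^{(N)}(s)-\Phi(s)\big|^2+\big|l_0(s)-\bar{l}_0(s)\big|^2\big)ds,
\]
and combining Lemma \ref{perturbed minor lemma 1} with the major-agent bound already established yields $O(1/N+\varepsilon_N^2)$. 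Adding the two estimates completes the proof.

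The argument is a routine stability estimate and presents no genuine obstacle; the single point requiring care is the triangular coupling through the diffusion term $H(l_0-\bar{l}_0)$ in the minor agent's equation, which forces the major agent to be handled first so that its bound can be fed into the minor agent's estimate. I would also emphasize that, in contrast with the proof of Lemma \ref{perturbed minor lemma 1}, no bound such as (\ref{boundedness of minor control}) on $u_i$ is needed here, precisely because the perturbation control drops out when forming the difference.
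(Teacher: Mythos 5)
Your proposal is correct and follows essentially the same route as the paper: subtract the two systems so that the controls and constant terms cancel, estimate the major agent's difference first via the classical linear SDE estimate and Lemma \ref{perturbed minor lemma 1}, and then feed that bound into the minor agent's difference equation, whose diffusion carries the extra term $H(l_0-\bar{l}_0)$. The only divergence is cosmetic: your remark that no bound on $u_i$ is needed here is a correct observation the paper does not make explicit.
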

\begin{proof}
From both the first equation of (\ref{perturbed minor}) and (\ref{perturbed minor limit}), we obtain
\[
\left\{
\begin{aligned}
d(l_0-\bar{l}_0)&=\left[A_0(l_0-\bar{l}_0)+F_0^1(l^{(N)}-\Phi)\right]dt
+\left[C(l_0-\bar{l}_0)+F_0^2(l^{(N)}-\Phi)\right]dW_0(t),\\
l_0(0)-\bar{l}_0(0)&=0.
\end{aligned}
\right.
\]
With the help of classical estimates of SDE and Lemma \ref{perturbed minor lemma 1}, we have
\begin{align}\label{perturbed minor lemma 3 estimate major}
\mathbb{E}\sup_{0\leq t\leq T}\Big|l_{0}(t)-\bar{l}_{0}(t)\Big|^2=O\Big(\frac{1}{N}+\varepsilon_N^2\Big).
\end{align}
Now, from both the second equation of (\ref{perturbed minor}) and (\ref{perturbed minor limit}), we obtain
\[
\begin{aligned}
d(l_i-\bar{l}_i)&=\left[A_{\theta_i}(l_i-\bar{l}_i)+F_1(l^{(N)}-\Phi)\right]dt\\
&\qquad+\left[C(l_i-\bar{l}_i)+F_2(l^{(N)}-\Phi)+H(l_0-\bar{l}_0)\right]dW_0(t),\qquad
l_i(0)-\bar{l}_i(0)=0.
\end{aligned}
\]
From the classical estimates of SDE, Lemma \ref{perturbed minor lemma 1} and (\ref{perturbed minor lemma 3 estimate major}), it is easy to obtain (\ref{perturbed minor lemma 3 estimate}).
\end{proof}
\medskip

\begin{lemma}\label{perturbed minor lemma 4}
Under the assumptions of \emph{\textbf{(A1)-(A4)}}, for each $1\leq i\leq N$, for the minor agent $\mathcal{A}_i$'s perturbation control $u_{i}$, we have
\[
\Big|\mathcal{J}_i({u}_i, \bar{u}_{-i})-J_0({u}_i)\Big|=O\Big(\frac{1}{\sqrt{N}}+\varepsilon_N\Big).
\]
\end{lemma}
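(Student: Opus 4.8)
The plan is to mirror the arguments of Lemma \ref{first lemma for cost} and Lemma \ref{perturbed major lemma 3}, now comparing the realized cost $\mathcal{J}_i(u_i,\bar{u}_{-i})$ --- built from the perturbed states $l_i$, $l_0$, $l^{(N)}$ governed by (\ref{perturbed minor}) --- against the limiting cost $J_i(u_i)$, built from the perturbed limiting states $\bar{l}_i$, $\bar{l}_0$, $\Phi$ governed by (\ref{perturbed minor limit}). First I would write the difference out explicitly from (\ref{minor agent cost functional}) and (\ref{minor agent cost functional limit}); since agent $\mathcal{A}_i$ applies the \emph{same} control $u_i$ in both systems, the running term $\langle R_{\theta_i}u_i,u_i\rangle$ cancels exactly, leaving only the state-dependent quadratic terms. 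Introducing the shorthand $\xi:=l_i-\rho l^{(N)}-(1-\rho)l_0$ and $\bar{\xi}:=\bar{l}_i-\rho\Phi-(1-\rho)\bar{l}_0$, this reads
\[
\mathcal{J}_i(u_i,\bar{u}_{-i})-J_i(u_i)=\frac{1}{2}\mathbb{E}\Big[\int_0^T\big(\langle Q\xi,\xi\rangle-\langle Q\bar{\xi},\bar{\xi}\rangle\big)\,dt+\langle G\xi(T),\xi(T)\rangle-\langle G\bar{\xi}(T),\bar{\xi}(T)\rangle\Big].
\]

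Next, for each quadratic difference I would invoke the polarization identity already used in Lemma \ref{first lemma for cost},
\[
\langle Q\xi,\xi\rangle-\langle Q\bar{\xi},\bar{\xi}\rangle=\langle Q(\xi-\bar{\xi}),\xi-\bar{\xi}\rangle+2\langle Q(\xi-\bar{\xi}),\bar{\xi}\rangle,
\]
and the analogue for $G$. Since $\xi-\bar{\xi}=(l_i-\bar{l}_i)-\rho(l^{(N)}-\Phi)-(1-\rho)(l_0-\bar{l}_0)$, the bounds $\rho\in[0,1]$ together with $(x+y+z)^2\le 3(x^2+y^2+z^2)$ reduce the whole estimate to the three error quantities $l_i-\bar{l}_i$, $l^{(N)}-\Phi$ and $l_0-\bar{l}_0$. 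I would then feed in the already-established estimates: Lemma \ref{perturbed minor lemma 1} gives $\mathbb{E}\sup_{0\le t\le T}|l^{(N)}-\Phi|^2=O(1/N+\varepsilon_N^2)$, and Lemma \ref{perturbed minor lemma 3} gives $\mathbb{E}\sup_{0\le t\le T}(|l_0-\bar{l}_0|^2+|l_i-\bar{l}_i|^2)=O(1/N+\varepsilon_N^2)$, so that $\mathbb{E}\sup_t|\xi-\bar{\xi}|^2=O(1/N+\varepsilon_N^2)$.

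With this, the diagonal term $\langle Q(\xi-\bar{\xi}),\xi-\bar{\xi}\rangle$ is directly $O(1/N+\varepsilon_N^2)$, while for the cross term $2\langle Q(\xi-\bar{\xi}),\bar{\xi}\rangle$ I would apply Cauchy--Schwarz in $L^2(\Omega\times[0,T])$ against the uniform bound $\mathbb{E}\sup_t|\bar{\xi}|^2\le M$, which follows from (\ref{bounded}), (\ref{boundedness of li}) and $\mathbb{E}\sup_t|\Phi|^2\le M$; this produces a factor $\big(O(1/N+\varepsilon_N^2)\big)^{1/2}=O(1/\sqrt N+\varepsilon_N)$, and it is this cross term that sets the final rate. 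The terminal $G$-term is handled identically. Summing the contributions then yields $|\mathcal{J}_i(u_i,\bar{u}_{-i})-J_i(u_i)|=O(1/\sqrt N+\varepsilon_N)$.

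The only step requiring genuine care --- and the sole place where the perturbation enters nontrivially --- is ensuring that all the constants $M$ are \emph{uniform} over the admissible perturbations $u_i$. This is exactly why the analysis is restricted beforehand to perturbations satisfying the a priori bound $\mathbb{E}\int_0^T|u_i|^2\,dt\le M$ of (\ref{boundedness of minor control}): that bound is precisely what furnishes the uniform moment estimate $\mathbb{E}\sup_t|\bar{l}_i|^2\le M$ needed in the cross-term step (and, together with Lemma \ref{perturbed minor lemma 1} and Lemma \ref{perturbed minor lemma 3}, the corresponding control of $\xi-\bar{\xi}$). Once this uniformity is secured, the argument is otherwise a verbatim repetition of Lemma \ref{perturbed major lemma 3}, so I would not expect any further obstacle.
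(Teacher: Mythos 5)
Your proposal is correct and follows essentially the same route as the paper: write the difference of the quadratic forms in $\xi=l_i-\rho l^{(N)}-(1-\rho)l_0$ and $\bar{\xi}=\bar{l}_i-\rho\Phi-(1-\rho)\bar{l}_0$ (the $\langle R_{\theta_i}u_i,u_i\rangle$ terms cancelling), expand via the polarization identity as in Lemma \ref{first lemma for cost}, and control the result with Lemmas \ref{perturbed minor lemma 1} and \ref{perturbed minor lemma 3} together with the uniform second-moment bounds on $\bar{l}_0$, $\bar{l}_i$, $\Phi$ guaranteed by the a priori bound (\ref{boundedness of minor control}) on the perturbation. Your remark on the uniformity of the constants is exactly the point the paper handles by restricting to perturbations satisfying (\ref{boundedness of minor control}).
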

\begin{proof}
Recall (\ref{minor agent cost functional}), (\ref{minor agent cost functional limit}) and (\ref{sum of mk}), we have
\begin{equation}\label{minor agent ee12}
\begin{aligned}
&\mathcal{J}_i({u}_i, \bar{u}_{-i})-J_i({u}_i)\\
=&\frac{1}{2}\mathbb{E}\bigg [ \int_0^T\Big( \Big <Q\big(l_i-\rho l^{(N)}\big)-(1-\rho)l_0,l_i-\rho l^{(N)}-(1-\rho)l_0\Big>\\
&\qquad\qquad-\Big <Q\big(\bar{l}_i
-\rho \Phi-(1-\rho)\bar{l}_0\big),\bar{l}_i-\rho \Phi-(1-\rho)\bar{l}_0\Big>\Big)dt\\
&+\Big<G\big(l_i(T)-\rho l^{(N)}(T)-(1-\rho)l_0(T)\big),l_i(T)-\rho l^{(N)}(T)-(1-\rho)l_0(T)\Big>\\
&\qquad\qquad-\Big<G\big(\bar{l}_i(T)
-\rho \Phi(T)-(1-\rho)\bar{l}_0(T)\big),\bar{l}_i(T)-\rho \Phi(T)-(1-\rho)\bar{l}_0(T)\Big>\bigg].
\end{aligned}
\end{equation}
Similar to the proof of Lemma \ref{first lemma for cost}, by using Lemma \ref{perturbed minor lemma 1}, \ref{perturbed minor lemma 3} and $\mathbb{E}\left(\left|\bar{l}_0(t)\right|^2+\left|\bar{l}_i(t)\right|^2+\left|\Phi(t)\right|^2\right)\leq M$, we have
\[
\begin{aligned}
&\Bigg|\mathbb{E} \int_0^T\Big( \Big <Q\big(l_i-\rho l^{(N)}-(1-\rho)l_0\big),l_i-\rho l^{(N)}-(1-\rho)l_0\Big>\\
&\qquad\qquad-\Big <Q\big(\bar{l}_i
-\rho \Phi\big)-(1-\rho)\bar{l}_0,\bar{l}_i-\rho \Phi-(1-\rho)\bar{l}_0\Big>\Big)dt\Bigg|\\
\leq &M\int_0^T\mathbb{E}\left|l_i-\bar{l}_i\right|^2dt+M\int_0^T\mathbb{E}\left|l_0-\bar{l}_0\right|^2dt
+M\int_0^T\mathbb{E}\left|l^{(N)}-\Phi\right|^2dt\\
&\qquad+M\int_0^T\left(\mathbb{E}\left|l_i-\bar{l}_i\right|^2
+\mathbb{E}\left|l_0-\bar{l}_0\right|^2
+\mathbb{E}\left|l^{(N)}-\Phi\right|^2\right)^{\frac{1}{2}}dt
\!=\!O\Big(\frac{1}{\sqrt{N}}\!+\!\varepsilon_N\Big)
\end{aligned}
\]
and
\[
\begin{aligned}
&\Bigg|\mathbb{E}\bigg[\Big<G\big(l_i(T)-\rho l^{(N)}(T)-(1-\rho)l_0(T)\big),l_i(T)-\rho l^{(N)}(T)-(1-\rho)l_0(T)\Big>\\
&\qquad\qquad-\Big<G\big(\bar{l}_i(T)
-\rho \Phi(T)-(1-\rho)\bar{l}_0(T)\big),\bar{l}_i(T)-\rho \Phi(T)-(1-\rho)\bar{l}_0(T)\Big>\bigg]\Bigg|
\!=\!O\Big(\frac{1}{\sqrt{N}}\!+\!\varepsilon_N\Big).
\end{aligned}
\]
The proof is completed by noticing (\ref{minor agent ee12}).
\end{proof}
\begin{theorem}\label{minor Nash equilibrium theorem}
Under the assumptions of \emph{\textbf{(A1)-(A4)}}, $(\bar{u}_0,\bar{u}_1,\cdots,\bar{u}_N)$ is an $\varepsilon$-Nash equilibrium of \emph{\textbf{Problem (CC)}} for minor agents.
\end{theorem}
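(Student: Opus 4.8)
The plan is to follow verbatim the scheme already used for the major agent in Theorem \ref{major Nash equilibrium theorem}, simply replacing the major-agent perturbation estimates by their minor-agent counterparts established in this subsection. Fix $1\leq i\leq N$ and an arbitrary alternative control $u_i\in\mathcal{U}_{ad}^{i}$. First I would dispose of the trivial case: if $\mathcal{J}_i(u_i,\bar{u}_{-i})>\mathcal{J}_i(\bar{u}_i,\bar{u}_{-i})$ then the desired inequality holds for free, so it suffices to treat controls satisfying $\mathcal{J}_i(u_i,\bar{u}_{-i})\leq\mathcal{J}_i(\bar{u}_i,\bar{u}_{-i})$. For such controls, since $Q\geq0$, $G\geq0$ and $R_{\theta_i}>0$ by \textbf{(A3)}, I would combine this bound with Lemma \ref{first lemma for cost} to extract the uniform energy estimate (\ref{boundedness of minor control}), that is, $\mathbb{E}\int_0^T|u_i(t)|^2dt\leq M$ with $M$ independent of $N$. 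This is precisely the standing hypothesis under which (\ref{boundedness of li}) and Lemmas \ref{perturbed minor lemma 1}, \ref{perturbed minor lemma 3} and \ref{perturbed minor lemma 4} were proved, so all of them become available for the perturbations that matter.

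With the energy bound secured, the conclusion reduces to chaining three one-line estimates. From Lemma \ref{first lemma for cost} one has $\mathcal{J}_i(\bar{u}_i,\bar{u}_{-i})\leq J_i(\bar{u}_i)+O(1/\sqrt{N}+\varepsilon_N)$; from the optimality of $\bar{u}_i=\varphi_{\theta_i}(\bar{p}_i,\bar{q}_i)$ for the limiting Problem (\textbf{LCC-Minor}) one has $J_i(\bar{u}_i)=\inf_{u_i\in\mathcal{U}_{ad}^{i}}J_i(u_i)\leq J_i(u_i)$; and from Lemma \ref{perturbed minor lemma 4} one has $J_i(u_i)\leq \mathcal{J}_i(u_i,\bar{u}_{-i})+O(1/\sqrt{N}+\varepsilon_N)$. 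Concatenating these three displays gives $\mathcal{J}_i(\bar{u}_i,\bar{u}_{-i})\leq \mathcal{J}_i(u_i,\bar{u}_{-i})+O(1/\sqrt{N}+\varepsilon_N)$, which is exactly the asserted $\varepsilon$-Nash property with $\varepsilon=O(1/\sqrt{N}+\varepsilon_N)$; since $\varepsilon_N\to0$ by \textbf{(A1)}, we obtain $\lim_{N\to\infty}\varepsilon(N)=0$.

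I expect the only delicate point to be the reduction in the first paragraph rather than the final chaining. All of the minor-perturbation approximation lemmas hold only after the energy of $u_i$ has been controlled uniformly in $N$, and this control must be bootstrapped from the coercivity $R_{\theta_i}>0$ together with Lemma \ref{first lemma for cost}; one has to verify that the constants produced there genuinely do not depend on $N$. The optimality identity $J_i(\bar{u}_i)=\inf J_i$ also deserves a word, since it rests on the \emph{sufficiency} of the convex stochastic maximum principle: this applies here because $J_i$ is convex in $(z_i,u_i)$ and strictly convex in $u_i$ owing to $Q,G\geq0$ and $R_{\theta_i}>0$, so the FBSDE characterization (\ref{minor Hamiltonian system}) indeed furnishes the minimizer. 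Once these two points are checked, the remaining steps are routine, being exact analogues of those carried out for the major agent.
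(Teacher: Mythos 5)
Your proposal is correct and follows essentially the same route as the paper: the reduction to perturbations with $\mathbb{E}\int_0^T|u_i|^2dt\leq M$ via $Q,G\geq0$, $R_{\theta_i}>0$ and Lemma \ref{first lemma for cost} (which the paper carries out in the preamble to the minor-perturbation subsection, just before (\ref{boundedness of minor control})), followed by chaining Lemma \ref{first lemma for cost}, the optimality $J_i(\bar{u}_i)=\inf_{u_i\in\mathcal{U}_{ad}^i}J_i(u_i)$, and Lemma \ref{perturbed minor lemma 4}. Your additional remarks on where the energy bound is actually used and on the sufficiency of the convex SMP are accurate but do not change the argument.
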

\begin{proof}
For each $1\leq i\leq N$, combining Lemma \ref{first lemma for cost} and Lemma \ref{perturbed minor lemma 4}, we have
\[
\begin{aligned}
\mathcal{J}_i(\bar{u}_i, \bar{u}_{-i})
\leq J_i(\bar{u}_i)+O\Big(\frac{1}{\sqrt{N}}\!+\!\varepsilon_N\Big)
\leq J_i({u}_i)+O\Big(\frac{1}{\sqrt{N}}\!+\!\varepsilon_N\Big)
\leq \mathcal{J}_i({u}_i, \bar{u}_{-i})+O\Big(\frac{1}{\sqrt{N}}\!+\!\varepsilon_N\Big),
\end{aligned}
\]
where the second inequality comes from the fact that $J_i(\bar{u}_i)=\inf_{u_i\in\mathcal{U}_{ad}^i}J_i(u_i)$.
Consequently, Theorem \ref{minor Nash equilibrium theorem} holds with $\varepsilon=O\Big(\frac{1}{\sqrt{N}}+\varepsilon_N\Big)$.
\end{proof}
\medskip

By combining Theorems \ref{major Nash equilibrium theorem}, \ref{minor Nash equilibrium theorem}, we obtain the following main result of this paper:
\begin{theorem}\label{Nash equilibrium theorem}
Under the assumptions of \emph{\textbf{(A1)-(A4)}},
$(\bar{u}_0,\bar{u}_1,\cdots,\bar{u}_N)$ is an $\varepsilon$-Nash equilibrium of \emph{\textbf{Problem (CC)}}, where
$\bar{u}_0=\varphi_0(\bar{p}_0,\bar{q}_0)$, $\bar{u}_i=\varphi_{\theta_i}(\bar{p}_i,\bar{q}_i)$, $1\leq i\leq N$ for
\[
\varphi_0(p,q):=
\mathbf{P}_{\Gamma_0}\big[R_0^{-1}\big(B^{\prime}_0p+D^{\prime}_0q\big)\big], \quad
\varphi_{\theta_i}(p,q):=\mathbf{P}_{\Gamma_{\theta_i}}\big[R_{\theta_i}^{-1}\big(B^{\prime}p
\!+\!D_{\theta_i}^{\prime}q\big)\big].
\]
\end{theorem}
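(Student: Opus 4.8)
The plan is to obtain this statement as an immediate corollary of the two preceding theorems, which between them exhaust all of the $N+1$ agents. Recall from Definition \ref{epsilon-Nash equilibrium def} that to certify $(\bar{u}_0,\bar{u}_1,\cdots,\bar{u}_N)$ as an $\varepsilon$-Nash equilibrium I must exhibit a \emph{single} quantity $\varepsilon=\varepsilon(N)\geq 0$ with $\lim_{N\to\infty}\varepsilon(N)=0$ such that the inequality $\mathcal{J}_i(\bar{u}_i,\bar{u}_{-i})\leq \mathcal{J}_i(u_i,\bar{u}_{-i})+\varepsilon$ holds simultaneously for every index $0\leq i\leq N$ and every admissible perturbation $u_i\in\mathcal{U}_{ad}^{i}$.

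First I would split the index range into the single major agent $i=0$ and the minor agents $1\leq i\leq N$. For $i=0$, Theorem \ref{major Nash equilibrium theorem} already delivers the bound $\mathcal{J}_0(\bar{u}_0,\bar{u}_{-0})\leq \mathcal{J}_0(u_0,\bar{u}_{-0})+O\big(\tfrac{1}{\sqrt{N}}+\varepsilon_N\big)$ for any $u_0\in\mathcal{U}_{ad}^{0}$. For each $1\leq i\leq N$, Theorem \ref{minor Nash equilibrium theorem} gives the matching bound $\mathcal{J}_i(\bar{u}_i,\bar{u}_{-i})\leq \mathcal{J}_i(u_i,\bar{u}_{-i})+O\big(\tfrac{1}{\sqrt{N}}+\varepsilon_N\big)$ for any $u_i\in\mathcal{U}_{ad}^{i}$. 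The crucial point is that the constant hidden in the $O(\cdot)$ is uniform in $i$: it descends from the $N$-independent estimates furnished by Lemma \ref{lemma for xN}, Lemma \ref{lemma for xN and m}, Lemma \ref{lemma for x and xi} and Lemma \ref{first lemma for cost}, all of which were stated as suprema over $0\leq i\leq N$.

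It then remains to set $\varepsilon=\varepsilon(N):=O\big(\tfrac{1}{\sqrt{N}}+\varepsilon_N\big)$, a single expression dominating both families of bounds independently of the agent and of the chosen perturbation, and to verify that it vanishes: the term $\tfrac{1}{\sqrt{N}}$ clearly tends to $0$, while Assumption \textbf{(A1)} gives $\lim_{N\to\infty}\pi^{(N)}=\pi$, forcing $\varepsilon_N=\sup_{1\leq k\leq K}|\pi_k^{(N)}-\pi_k|\to 0$. Since no residual analysis is needed beyond collating the two earlier theorems, I do not anticipate any genuine obstacle; the sole point demanding care is confirming that the $O(\cdot)$ constants are truly uniform in $i$, so that one common $\varepsilon$ indeed serves all $N+1$ agents at once.
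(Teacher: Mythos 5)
Your proposal is correct and follows exactly the paper's route: Theorem \ref{Nash equilibrium theorem} is obtained in the paper simply by combining Theorems \ref{major Nash equilibrium theorem} and \ref{minor Nash equilibrium theorem}, with the common $\varepsilon=O\big(\tfrac{1}{\sqrt{N}}+\varepsilon_N\big)$ vanishing by \textbf{(A1)}. Your extra remark that the $O(\cdot)$ constants are uniform in $i$ (via the suprema in Lemmas \ref{lemma for xN}--\ref{first lemma for cost}) is the right point to check and is consistent with the paper.
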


\section{Appendix}
We give this appendix to prove Theorem \ref{wellposedness MF-FBSDE PT}. The fully-coupled structure of MF-FBSDE (\ref{General MF-FBSDE}) arises difficulties for establishing its wellposedness. Motivated by Pardoux and Tang \cite{PT-1999} Theorem 3.1, we can  establish the
wellposedness of MF-FBSDE (\ref{General MF-FBSDE}) for arbitrary time duration when it is weakly coupled.

Let us first note that for a given $(Y(\cdot),Z(\cdot))\in L^2_{\mathcal F}(0,T;\mathbb{R}^m)\times L^2_{\mathcal F}(0,T;\mathbb{R}^{m\times (d+1)})$, the forward equation in the MF-FBSDE (\ref{General MF-FBSDE}) has a unique solution $X(\cdot)\in L^2_{\mathcal F}(0,T;\mathbb{R}^n)$, thus we introduce a map $\mathcal{M}_1:L^2_{\mathcal F}(0,T;\mathbb{R}^m)\times L^2_{\mathcal F}(0,T;\mathbb{R}^{m\times (d+1)})\rightarrow L^2_{\mathcal F}(0,T;\mathbb{R}^n)$, through
\begin{equation}\label{Forward General MF-FBSDE}
\begin{array}
[c]{rl}%
X(t)=x+&\displaystyle{\int_0^t} b(s,X(s),\mathbb{E}[X(s)|\mathcal{F}^{W_0}_s],Y(s),Z(s)])ds\medskip\\
+&\displaystyle{\int_0^t}\sigma(s,X(s),\mathbb{E}[X(s)|\mathcal{F}^{W_0}_s],Y(s),Z(s))dW(s).
\end{array}
\end{equation}We mention that the wellposedness of (\ref{Forward General MF-FBSDE}) can be established by using the contraction mapping method under assumption $(H_1),(H_2)$, although it has the term $\mathbb{E}[X_{s}|\mathcal{F}^{W_0}_s]$. We omit the proof. Moreover, with the help of BDG inequality, it follows that $\mathbb{E}\sup_{t\in[0,T]}|X(t)|^2<\infty$.
\begin{lemma} Let $X_i$ be the solution of (\ref{Forward General MF-FBSDE}) corresponding  to $(Y_i(\cdot),Z_i(\cdot))\in  L^2_{\mathcal F}(0,T;\mathbb{R}^m)\times L^2_{\mathcal F}(0,T;\mathbb{R}^{m\times (d+1)})$, $i=1,2$. Then for all $\lambda\in\mathbb{R}$, $K_1$, $K_2>0$, we have
\begin{equation}\label{estimate 1 mapping M1}
\begin{array}
[c]{rl}%
&e^{-\lambda t}\mathbb{E}|X_1(t)-X_2(t)|^2+\overline{\lambda}_1\displaystyle{\int_0^t}e^{-\lambda s}\mathbb{E}|X_1(s)-X_2(s)|^2ds\medskip\\
\leq & (k_2K_1+k_9^2)\displaystyle{\int_0^t}e^{-\lambda s}\mathbb{E}|Y_1(s)-Y_2(s)|^2ds+(k_3K_2+k_{10}^2)\displaystyle{\int_0^t}e^{-\lambda s}\mathbb{E}|Z_1(s)-Z_2(s)|^2ds,
\end{array}
\end{equation}where $\overline{\lambda}_1:=\lambda-2\lambda_1-k_2K_1^{-1}-k_3K_2^{-1}-2k_1-k_7^2-k_8^2$. Moreover,
\begin{equation}\label{estimate 2 mapping M1}
\begin{array}
[c]{ll}%
e^{-\lambda t}\mathbb{E}|X_1(t)-X_2(t)|^2\leq &(k_2K_1+k_9^2)\displaystyle{\int_0^t}e^{-\overline{\lambda}_1(t-s)}e^{-\lambda s}
\mathbb{E}|Y_1(s)-Y_2(s)|^2ds
\medskip\\
&+(k_3K_2+k_{10}^2)\displaystyle{\int_0^t}e^{-\overline{\lambda}_1(t-s)}e^{-\lambda s}\mathbb{E}|Z_1(s)-Z_2(s)|^2ds.
\end{array}
\end{equation}
\end{lemma}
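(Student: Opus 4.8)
The plan is to apply Itô's formula to the discounted squared difference $e^{-\lambda t}|X_1(t)-X_2(t)|^2$ and then to exploit the one-sided monotonicity and Lipschitz bounds in $(H_1)$, together with the conditional Jensen inequality to absorb the mean-field term. Write $\hat{X}=X_1-X_2$, $\hat{Y}=Y_1-Y_2$, $\hat{Z}=Z_1-Z_2$, and set $m(s)=\mathbb{E}[\hat{X}(s)\mid\mathcal{F}^{W_0}_s]$; let $\hat{b}(s)$ and $\hat{\sigma}(s)$ denote the increments of the drift and diffusion coefficients along the two solutions. Since $\hat{X}(0)=0$, Itô's formula gives
\begin{equation*}
d\big(e^{-\lambda t}|\hat{X}(t)|^2\big)=e^{-\lambda t}\big[-\lambda|\hat{X}|^2+2\langle\hat{X},\hat{b}\rangle+|\hat{\sigma}|^2\big]\,dt+2e^{-\lambda t}\langle\hat{X},\hat{\sigma}\,dW\rangle.
\end{equation*}

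First I would bound the drift cross-term by splitting $\hat{b}$ into the increment in the $x$-variable alone, controlled by $(H_1)(i)$ as $2\langle\hat{X},\cdot\rangle\le 2\lambda_1|\hat{X}|^2$, and the remaining increment, controlled by $(H_1)(ii)$ so that $2\langle\hat{X},\cdot\rangle\le 2|\hat{X}|\,(k_1|m|+k_2|\hat{Y}|+k_3|\hat{Z}|)$. Young's inequality with the free parameters $K_1,K_2$ then yields $2k_2|\hat{X}||\hat{Y}|\le k_2K_1^{-1}|\hat{X}|^2+k_2K_1|\hat{Y}|^2$, $2k_3|\hat{X}||\hat{Z}|\le k_3K_2^{-1}|\hat{X}|^2+k_3K_2|\hat{Z}|^2$, and $2k_1|\hat{X}||m|\le k_1|\hat{X}|^2+k_1|m|^2$, while the diffusion term is handled by $(H_1)(vii)$, giving $|\hat{\sigma}|^2\le k_7^2|\hat{X}|^2+k_8^2|m|^2+k_9^2|\hat{Y}|^2+k_{10}^2|\hat{Z}|^2$. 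The mean-field structure enters only through the $|m|^2$ terms: after taking expectations, conditional Jensen gives $\mathbb{E}|m(s)|^2\le\mathbb{E}|\hat{X}(s)|^2$, so the coefficients $k_1$ and $k_8^2$ collapse onto $\mathbb{E}|\hat{X}|^2$ and contribute precisely the $2k_1$ and $k_8^2$ entries of $\overline{\lambda}_1$.

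Next I would take expectations and integrate over $[0,t]$. The stochastic integral is a true martingale — here I would use the a priori bound $\mathbb{E}\sup_{t\le T}|X(t)|^2<\infty$ recorded just before the lemma — and hence drops out. Collecting the coefficients of $\mathbb{E}|\hat{X}|^2$, $\mathbb{E}|\hat{Y}|^2$, $\mathbb{E}|\hat{Z}|^2$ reproduces (\ref{estimate 1 mapping M1}) with $\overline{\lambda}_1=\lambda-2\lambda_1-k_2K_1^{-1}-k_3K_2^{-1}-2k_1-k_7^2-k_8^2$. For the second estimate (\ref{estimate 2 mapping M1}) I would instead keep the computation in differential form: with $u(t)=e^{-\lambda t}\mathbb{E}|\hat{X}(t)|^2$ and $h(t)=(k_2K_1+k_9^2)e^{-\lambda t}\mathbb{E}|\hat{Y}(t)|^2+(k_3K_2+k_{10}^2)e^{-\lambda t}\mathbb{E}|\hat{Z}(t)|^2$, the same bounds give the linear differential inequality $u'(t)\le-\overline{\lambda}_1u(t)+h(t)$ with $u(0)=0$. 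Multiplying by the integrating factor $e^{\overline{\lambda}_1 t}$ and integrating yields $u(t)\le\int_0^t e^{-\overline{\lambda}_1(t-s)}h(s)\,ds$, which is exactly (\ref{estimate 2 mapping M1}).

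The hard part will be bookkeeping rather than conceptual: one must track the constants through the Young inequalities so that they match $\overline{\lambda}_1$ and the right-hand coefficients exactly. The two genuinely substantive points are the uniform control of the conditional-expectation term through $\mathbb{E}|\mathbb{E}[\hat{X}\mid\mathcal{F}^{W_0}_s]|^2\le\mathbb{E}|\hat{X}|^2$, which is what lets the mean-field dependence be treated as if it were an ordinary $x$-dependence, and the justification that the stochastic integral is a true (not merely local) martingale so that it vanishes in expectation; everything else is routine.
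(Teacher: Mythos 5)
Your proposal is correct and follows essentially the same route as the paper: Itô's formula on the discounted squared difference, the one-sided monotonicity and Lipschitz bounds from $(H_1)$ with Young's inequality in $K_1,K_2$, conditional Jensen to reduce $\mathbb{E}|\mathbb{E}[\hat X\mid\mathcal F^{W_0}_s]|^2$ to $\mathbb{E}|\hat X(s)|^2$, and the vanishing of the martingale term. Your derivation of the second estimate via the differential inequality $u'\le-\overline{\lambda}_1u+h$ and an integrating factor is just a rephrasing of the paper's application of Itô's formula to $e^{-\overline{\lambda}_1(t-s)}e^{-\lambda s}\mathbb{E}|\hat X(s)|^2$, so there is no substantive difference.
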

\begin{proof}We denote $\overline{X}:=X_1-X_2$, $\overline{Y}:=Y_1-Y_2$, $\overline{Z}:=Z_1-Z_2$, $\overline{b}:=b(X_1,\mathbb{E}[X_1|\mathcal{F}^{W_0}],Y_1,Z_1)-b(X_2,\mathbb{E}[X_2|\mathcal{F}^{W_0}],Y_2,Z_2)$, $\overline{\sigma}:=\sigma(X_1,\mathbb{E}[X_1|\mathcal{F}^{W_0}],Y_1,Z_1)
-\sigma(X_2,\mathbb{E}[X_2|\mathcal{F}^{W_0}],Y_2,Z_2)$.
Applying It\^{o}'s formula to $e^{-\lambda t}\mathbb{E}|X_1(t)-X_2(t)|^2$ and taking expectation, we obtain
\begin{equation}\label{equation 1 general MF-FBSDE}
e^{-\lambda t}\mathbb{E}|\overline{X}(t)|^2= -\lambda\displaystyle{\int_0^t}e^{-\lambda s}\mathbb{E}|\overline{X}(s)|^2ds+2\mathbb{E}\displaystyle{\int_0^t}e^{-\lambda s}\langle\overline{X}(s),\overline{b}(s)\rangle ds+\mathbb{E}\displaystyle{\int_0^t}e^{-\lambda s}|\overline{\sigma}(s)|^2 ds.
\end{equation}
Noticing that
\[
\begin{array}
[c]{rl}%
&2\langle\overline{X}(s),\overline{b}(s)\rangle\medskip\\
=&2\langle\overline{X}(s),b(s,X_1(s),\mathbb{E}[X_1(s)|\mathcal{F}^{W_0}_s], Y_1(s),Z_1(s))-b(X_2(s),\mathbb{E}[X_1(s)|\mathcal{F}^{W_0}_s],Y_1(s),Z_1(s))\rangle\medskip\\
&+2\langle\overline{X}(s),b(s,X_2(s),\mathbb{E}[X_1(s)|\mathcal{F}^{W_0}_s],Y_1(s),Z_1(s))
-b(X_2(s),\mathbb{E}[X_2(s)|\mathcal{F}^{W_0}_s],Y_2(s),Z_2(s))\rangle\medskip\\
\leq& 2\lambda_1|\overline{X}(s)|^2+2|\overline{X}(s)|(k_1|\mathbb{E}[\overline{X}(s)|\mathcal{F}^{W_0}_s]|+
k_2|\overline{Y}(s)|+k_3|\overline{Z}(s)|)\medskip\\
\leq& (2\lambda_1+k_2K_1^{-1}+k_3K_2^{-1})|\overline{X}(s)|^2+2k_1\mathbb{E}[|\overline{X}(s)|^2|\mathcal{F}^{W_0}_s]|
+k_2K_1|\overline{Y}(s)|^2+k_3K_2|\overline{Z}(s)|^2,
\end{array}
\]
and
\[
\begin{array}
[c]{rl}%
|\overline{\sigma}(s)|^2
\leq& k_7^2|\overline{X}(s)|^2+k_8^2|\mathbb{E}[\overline{X}(s)|\mathcal{F}^{W_0}_s]|^2+
k_9^2|\overline{Y}(s)|^2+k_{10}^2|\overline{Z}(s)|^2\medskip\\
\leq&  k_7^2|\overline{X}(s)|^2+k_8^2\mathbb{E}[|\overline{X}(s)|^2|\mathcal{F}^{W_0}_s]+
k_9^2|\overline{Y}(s)|^2+k_{10}^2|\overline{Z}(s)|^2.
\end{array}
\]
Then, from (\ref{equation 1 general MF-FBSDE}) and $\mathbb{E}\left[\mathbb{E}[|\overline{X}(s)|^2|\mathcal{F}^{W_0}_s]\right]=\mathbb{E}|\overline{X}(s)|^2$, we can obtain (\ref{estimate 1 mapping M1}).

Now, we apply It\^{o}'s formula to $e^{-\overline{\lambda}_1(t-s)}e^{-\lambda s}\mathbb{E}|X_1(s)-X_2(s)|^2$ for $s\in[0,t]$ and taking expectation, it follows that
\begin{equation}\label{equation 2 general MF-FBSDE}
\begin{array}
[c]{rl}%
e^{-\lambda t}\mathbb{E}|\overline{X}(t)|^2= & -(\lambda-\overline{\lambda}_1)\displaystyle{\int_0^t}e^{-\overline{\lambda}_1(t-s)}e^{-\lambda s}\mathbb{E}|\overline{X}(s)|^2ds+2\mathbb{E}\displaystyle{\int_0^t}e^{-\overline{\lambda}_1(t-s)}e^{-\lambda s}\langle\overline{X}(s),\overline{b}(s)\rangle ds\medskip\\
&+\mathbb{E}\displaystyle{\int_0^t}e^{-\overline{\lambda}_1(t-s)}e^{-\lambda s}|\overline{\sigma}(s)|^2 ds.
\end{array}
\end{equation}From above estimates and (\ref{equation 2 general MF-FBSDE}), one can prove (\ref{estimate 2 mapping M1}).
\end{proof}
\begin{remark}By integrating both sides of (\ref{estimate 2 mapping M1}) on $[0,T]$ and using  $\frac{1-e^{-\overline{\lambda}_1(T-s)}}{\overline{\lambda}_1}\leq
\frac{1-e^{-\overline{\lambda}_1T}}{\overline{\lambda}_1}$, $\forall s\in[0,T]$, we have
\begin{equation}\label{estimate 3 mapping M1}
\|X_1-X_2\|_{\lambda}^2\leq \frac{1-e^{-\overline{\lambda}_1T}}{\overline{\lambda}_1}\left[(k_2K_1+k_9^2)\|Y_1-Y_2\|^2_{\lambda}
+(k_3K_2+k_{10}^2)\|Z_1-Z_2\|^2_{\lambda}\right].
\end{equation}
Let $t=T$ in (\ref{estimate 2 mapping M1}) and noticing that $e^{-\overline{\lambda}_1(T-s)}\leq 1\vee e^{-\overline{\lambda}_1T}$, $\forall s\in[0,T]$, thus
\begin{equation}\label{estimate 4 mapping M1}
e^{-\lambda T}\mathbb{E}|X_1(T)-X_2(T)|^2\leq \left[1\vee e^{-\overline{\lambda}_1T}\right]\left[(k_2K_1+k_9^2)\|Y_1-Y_2\|^2_{\lambda}
+(k_3K_2+k_{10}^2)\|Z_1-Z_2\|^2_{\lambda}\right].
\end{equation}In particular, if  $\overline{\lambda}_1>0$, we have
\begin{equation}\label{estimate 5 mapping M1}
e^{-\lambda T}\mathbb{E}|X_1(T)-X_2(T)|^2\leq (k_2K_1+k_9^2)\|Y_1-Y_2\|^2_{\lambda}
+(k_3K_2+k_{10}^2)\|Z_1-Z_2\|^2_{\lambda}.
\end{equation}
\end{remark}
\medskip

Similarly, for a given $X(\cdot)\in L^2_{\mathcal F}(0,T;\mathbb{R}^n)$, the backward equation in the MF-FBSDE (\ref{General MF-FBSDE}) has a unique solution $(Y(\cdot),Z(\cdot))\in L^2_{\mathcal F}(0,T;\mathbb{R}^m)\times L^2_{\mathcal F}(0,T;\mathbb{R}^{m\times (d+1)})$, thus we can introduce another map $\mathcal{M}_2:L^2_{\mathcal F}(0,T;\mathbb{R}^n) \rightarrow L^2_{\mathcal F}(0,T;\mathbb{R}^m)\times L^2_{\mathcal F}(0,T;\mathbb{R}^{m\times (d+1)})$, through
\begin{equation}\label{Backward General MF-FBSDE}
Y(t)=g(X(T),\mathbb{E}[X(T)|\mathcal{F}^{W_0}_T])+\int_t^T f(s,X(s),\mathbb{E}[X(s)|\mathcal{F}^{W_0}_s],Y(s),Z(s)])ds
-\int_t^TZ(s)dW(s).
\end{equation}The wellposedness of (\ref{Backward General MF-FBSDE}) under assumption $(H_1), (H_2)$ is referred to Darling and Pardoux \cite{DP-1997} Theorem 3.4 or Buckdahn and Nie \cite{BN-2016} Lemma 2.2.  Moreover, we have $\mathbb{E}\sup_{t\in[0,T]}|Y(t)|^2<\infty$.
\begin{lemma} Let $(Y_i(\cdot),Z_i(\cdot))$ be the solution of (\ref{Backward General MF-FBSDE}) corresponding  to $X_i\in  L^2_{\mathcal F}(0,T;\mathbb{R}^n)$, $i=1,2.$ Then for all $\lambda\in\mathbb{R}$, $K_3$, $K_4>0$, we have
\begin{equation}\label{estimate 1 mapping M2}
\begin{array}
[c]{rl}%
&e^{-\lambda t}\mathbb{E}|Y_1(t)-Y_2(t)|^2+\overline{\lambda}_2\displaystyle{\int_t^T}e^{-\lambda s}\mathbb{E}|Y_1(s)-Y_2(s)|^2ds\medskip\\
&\qquad\qquad\qquad+(1-k_6K_4)\displaystyle{\int_t^T}e^{-\lambda s}\mathbb{E}|Z_1(s)-Z_2(s)|^2ds\medskip\\
\leq &(k_{11}^2+k_{12}^2)e^{-\lambda T}\mathbb{E}|X_1(T)-X_2(T)|^2+(k_4+k_{5})K_3\displaystyle{\int_t^T}e^{-\lambda s}\mathbb{E}|X_1(s)-X_2(s)|^2ds,
\end{array}
\end{equation}where $\overline{\lambda}_2:=-\lambda-2\lambda_2-(k_4+k_5)K_3^{-1}-k_6K_4^{-1}$. Moreover,
\begin{equation}\label{estimate 2 mapping M2}
\begin{array}
[c]{ll}%
&e^{-\lambda t}\mathbb{E}|Y_1(t)-Y_2(t)|^2+(1-k_6K_4)\displaystyle{\int_t^T}e^{-\overline{\lambda}_2 (s-t)}e^{-\lambda s}\mathbb{E}|Z_1(s)-Z_2(s)|^2ds\medskip\\
\leq &(k_{11}^2+k_{12}^2)e^{-\overline{\lambda}_2 (T-t)}e^{-\lambda T}\mathbb{E}|X_1(T)-X_2(T)|^2\medskip\\
&+(k_4+k_{5})K_3\displaystyle{\int_t^T}e^{-\overline{\lambda}_2 (s-t)}e^{-\lambda s}\mathbb{E}|X_1(s)-X_2(s)|^2ds.
\end{array}
\end{equation}
\end{lemma}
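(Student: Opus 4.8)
The plan is to mirror the proof just given for the forward map $\mathcal{M}_1$, now working backward in time. Write $\overline{Y}:=Y_1-Y_2$, $\overline{Z}:=Z_1-Z_2$, $\overline{X}:=X_1-X_2$, and set $\overline{f}:=f(s,X_1,\mathbb{E}[X_1|\mathcal{F}^{W_0}_s],Y_1,Z_1)-f(s,X_2,\mathbb{E}[X_2|\mathcal{F}^{W_0}_s],Y_2,Z_2)$ together with $\overline{g}:=g(X_1(T),\mathbb{E}[X_1(T)|\mathcal{F}^{W_0}_T])-g(X_2(T),\mathbb{E}[X_2(T)|\mathcal{F}^{W_0}_T])$. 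The first step is to control the terminal datum: by $(H_1)$-(viii) and the conditional Jensen inequality $\mathbb{E}|\mathbb{E}[\overline{X}(T)|\mathcal{F}^{W_0}_T]|^2\leq\mathbb{E}|\overline{X}(T)|^2$, one obtains $\mathbb{E}|\overline{g}|^2\leq(k_{11}^2+k_{12}^2)\mathbb{E}|\overline{X}(T)|^2$, which produces the leading term on the right-hand sides of both estimates.

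The second step is the pointwise estimate for the driver. Splitting $\overline{f}$ into a piece varying only in the $y$-argument and a piece collecting the remaining arguments, I would apply the monotonicity bound $(H_1)$-(iv) to the former and the Lipschitz bound $(H_1)$-(v) to the latter, obtaining $2\langle\overline{Y},\overline{f}\rangle\leq 2\lambda_2|\overline{Y}|^2+2|\overline{Y}|\bigl(k_4|\overline{X}|+k_5|\mathbb{E}[\overline{X}|\mathcal{F}^{W_0}_s]|+k_6|\overline{Z}|\bigr)$. Young's inequality with the free weights $K_3,K_4>0$ then yields $2\langle\overline{Y},\overline{f}\rangle\leq(2\lambda_2+(k_4+k_5)K_3^{-1}+k_6K_4^{-1})|\overline{Y}|^2+k_4K_3|\overline{X}|^2+k_5K_3|\mathbb{E}[\overline{X}|\mathcal{F}^{W_0}_s]|^2+k_6K_4|\overline{Z}|^2$, whose coefficients are exactly those assembled into the definition of $\overline{\lambda}_2$.

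For the first inequality (\ref{estimate 1 mapping M2}) I would apply It\^{o}'s formula to $e^{-\lambda s}|\overline{Y}(s)|^2$ on $[t,T]$, take expectation (the stochastic integral is a genuine martingale thanks to $\mathbb{E}\sup_{s}|Y(s)|^2<\infty$ and $Z\in L^2$), and move the terminal term to the right. Substituting the driver bound and then invoking $\mathbb{E}|\mathbb{E}[\overline{X}(s)|\mathcal{F}^{W_0}_s]|^2\leq\mathbb{E}|\overline{X}(s)|^2$ after taking expectation converts the $k_5K_3|\mathbb{E}[\overline{X}|\mathcal{F}^{W_0}_s]|^2$ contribution into $k_5K_3\mathbb{E}|\overline{X}|^2$; collecting the $|\overline{Y}|^2$ coefficients reproduces $\overline{\lambda}_2$ while the $|\overline{Z}|^2$ coefficients produce the factor $1-k_6K_4$, giving (\ref{estimate 1 mapping M2}). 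For the second inequality (\ref{estimate 2 mapping M2}) I would instead apply It\^{o}'s formula to the weighted quantity $e^{-\overline{\lambda}_2(s-t)}e^{-\lambda s}|\overline{Y}(s)|^2$; the extra drift $-\overline{\lambda}_2$ is chosen precisely so that, after inserting the same driver bound, the entire $|\overline{Y}|^2$ contribution cancels, leaving only the $\overline{X}$ and $\overline{Z}$ terms carried by the weight $e^{-\overline{\lambda}_2(s-t)}$.

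I do not anticipate a substantive obstacle, since the argument is the exact backward analogue of the preceding forward lemma. The one point requiring care is the mean-field term: the pointwise bound carries $|\mathbb{E}[\overline{X}|\mathcal{F}^{W_0}_s]|^2$, which cannot be dominated pathwise by $|\overline{X}|^2$, so the conditional Jensen inequality must be applied only after taking expectations, and likewise for the terminal contribution $k_{12}^2|\mathbb{E}[\overline{X}(T)|\mathcal{F}^{W_0}_T]|^2$. Respecting this ordering is what renders the mean-field structure harmless and lets the constants collapse to $(k_4+k_5)K_3$ and $(k_{11}^2+k_{12}^2)$ as stated.
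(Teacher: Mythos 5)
Your proposal is correct and follows essentially the same route as the paper's own proof: the same splitting of $\overline{f}$ to exploit $(H_1)$-(iv) and $(H_1)$-(v), Young's inequality with the weights $K_3,K_4$, It\^{o}'s formula applied to $e^{-\lambda s}|\overline{Y}(s)|^2$ for the first estimate and to the $e^{-\overline{\lambda}_2(s-t)}$-weighted quantity for the second, and the conditional Jensen inequality applied only after taking expectations to absorb the mean-field terms. No gaps.
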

\begin{proof}We denote $\overline{X}:=X_1-X_2$, $\overline{Y}:=Y_1-Y_2$, $\overline{Z}:=Z_1-Z_2$, $\overline{f}:=f(X_1,\mathbb{E}[X_1|\mathcal{F}^{W_0}],Y_1,Z_1)-f(X_2,\mathbb{E}[X_2|\mathcal{F}^{W_0}],Y_2,Z_2)$.
Applying It\^{o}'s formula to $e^{-\lambda t}\mathbb{E}|Y_1(t)-Y_2(t)|^2$ and taking expectation, we obtain
\begin{equation}\label{equation 3 general MF-FBSDE}
\begin{array}
[c]{rl}%
&e^{-\lambda t}\mathbb{E}|\overline{Y}(t)|^2-\lambda\displaystyle{\int_t^T}e^{-\lambda s}\mathbb{E}|\overline{Y}(s)|^2ds+\mathbb{E}\displaystyle{\int_t^T}e^{-\lambda s}|\overline{Z}(s)|^2 ds\medskip\\
=&e^{-\lambda T}\mathbb{E}|\overline{Y}(T)|^2 +2\mathbb{E}\displaystyle{\int_t^T}e^{-\lambda s}\langle\overline{Y}(s),\overline{f}(s)\rangle ds.
\end{array}
\end{equation}Noticing that
\[
\begin{array}
[c]{rl}%
&2\langle\overline{Y}(s),\overline{f}(s)\rangle\medskip\\
=&2\langle\overline{Y}(s),f(s,X_1(s),\mathbb{E}[X_1(s)|\mathcal{F}^{W_0}_s], Y_1(s),Z_1(s))-f(X_1(s),\mathbb{E}[X_1(s)|\mathcal{F}^{W_0}_s],Y_2(s),Z_1(s))\rangle\medskip\\
&+2\langle\overline{Y}(s),f(s,X_1(s),\mathbb{E}[X_1(s)|\mathcal{F}^{W_0}_s],Y_2(s),Z_1(s))
-f(X_2(s),\mathbb{E}[X_2(s)|\mathcal{F}^{W_0}_s],Y_2(s),Z_2(s))\rangle\medskip\\
\leq& 2\lambda_2|\overline{Y}(s)|^2+2|\overline{Y}(s)|(k_4|\overline{X}(s)|
+k_5|\mathbb{E}[\overline{X}(s)|\mathcal{F}^{W_0}_s]|+k_6|\overline{Z}(s)|)\medskip\\
\leq& (2\lambda_2+k_4K_3^{-1}+k_5K_3^{-1}+k_6K_4^{-1})|\overline{Y}(s)|^2
+k_4K_3|\overline{X}(s)|^2+k_5K_3\mathbb{E}[|\overline{X}(s)|^2|\mathcal{F}^{W_0}_s]|+k_6K_4|\overline{Z}(s)|^2,
\end{array}
\]
and
\[
\begin{array}
[c]{rl}%
|\overline{Y}(T)|^2
=& |g(X_1(T),\mathbb{E}[X_1(T)|\mathcal{F}^{W_0}_T])-g(X_2(T),\mathbb{E}[X_2(T)|\mathcal{F}^{W_0}_T])|^2\medskip\\
\leq&  k_{11}^2|\overline{X}(T)|^2+k_{12}^2\mathbb{E}[|\overline{X}(s)|^2|\mathcal{F}^{W_0}_s].
\end{array}
\]
Then, from (\ref{equation 3 general MF-FBSDE}) and $\mathbb{E}\left[\mathbb{E}[|\overline{X}(s)|^2|\mathcal{F}^{W_0}_s]\right]=\mathbb{E}|\overline{X}(s)|^2$, we can obtain (\ref{estimate 1 mapping M2}).

Now, we apply It\^{o}'s formula to $e^{-\overline{\lambda}_2(s-t)}e^{-\lambda s}\mathbb{E}|Y_1(s)-Y_2(s)|^2$ for $s\in[t,T]$ and taking expectation, it follows that
\begin{equation}\label{equation 4 general MF-FBSDE}
\begin{array}
[c]{rl}%
&e^{-\lambda t}\mathbb{E}|\overline{Y}(t)|^2-(\lambda+\overline{\lambda}_2)
\displaystyle{\int_t^T}e^{-\overline{\lambda}_2(s-t)}e^{-\lambda s}\mathbb{E}|\overline{Y}(s)|^2ds
+\mathbb{E}\displaystyle{\int_t^T}e^{-\overline{\lambda}_2(s-t)}e^{-\lambda s}|\overline{Z}(s)|^2 ds\medskip\\
= & e^{-\overline{\lambda}_2(T-t)}e^{-\lambda T}\mathbb{E}|\overline{Y}(s)|^2+2\mathbb{E}\displaystyle{\int_t^T}e^{-\overline{\lambda}_2(s-t)}e^{-\lambda s}\langle\overline{Y}(s),\overline{f}(s)\rangle ds.
\end{array}
\end{equation}From above estimates and (\ref{equation 4 general MF-FBSDE}), one can prove (\ref{estimate 2 mapping M2}).
\end{proof}
\begin{remark}Now we choose $K_4$ satisfying $0<K_4\leq k_6^{-1}$, then
by integrating  both sides of (\ref{estimate 2 mapping M2}) on $[0,T]$ and using  $\frac{1-e^{-\overline{\lambda}_2s}}{\overline{\lambda}_2}\leq
\frac{1-e^{-\overline{\lambda}_2T}}{\overline{\lambda}_2}$, $\forall s\in[0,T]$, we have
\begin{equation}\label{estimate 3 mapping M2}
\|Y_1-Y_2\|_{\lambda}^2\leq \frac{1-e^{-\overline{\lambda}_2T}}{\overline{\lambda}_2}\left[(k_{11}^2+k_{12}^2)e^{-\lambda T}\mathbb{E}|X_1(T)-X_2(T)|^2
+(k_4+k_{5})K_3\|X_1-X_2\|^2_{\lambda}\right].
\end{equation}
Let $t=0$ in (\ref{estimate 2 mapping M2}) and noticing that $1\wedge e^{-\overline{\lambda}_2T}\leq e^{-\overline{\lambda}_2s}\leq 1\vee e^{-\overline{\lambda}_2T}$, $\forall s\in[0,T]$, thus
\begin{equation}\label{estimate 4 mapping M2}
\|Z_1-Z_2\|^2_{\lambda}\leq
\frac{(k_{11}^2\!+\!k_{12}^2)e^{-\overline{\lambda}_2T}e^{-\lambda T}\mathbb{E}|X_1(T)\!-\!X_2(T)|^2
\!+\!(k_4\!+\!k_{5})K_3(1\vee e^{-\overline{\lambda}_2T})\|X_1\!-\!X_2\|^2_{\lambda}}{(1-k_6K_4)(1\wedge e^{-\overline{\lambda}_2T})}.
\end{equation}On the other hand, if  $\overline{\lambda}_2>0$, let $t=0$ in (\ref{estimate 1 mapping M2}), we have
\begin{equation}\label{estimate 5 mapping M2}
\|Z_1-Z_2\|^2_{\lambda}\leq
\frac{(k_{11}^2+k_{12}^2)e^{-\lambda T}\mathbb{E}|X_1(T)-X_2(T)|^2
+(k_4+k_{5})K_3\|X_1-X_2\|^2_{\lambda}}{1-k_6K_4}.
\end{equation}
\end{remark}
\medskip

Now, we present the proof of Theorem \ref{wellposedness MF-FBSDE PT}.
\medskip

\begin{proof}{\textbf{of Theorem \ref{wellposedness MF-FBSDE PT}}} We define $\mathcal{M}:=\mathcal{M}_2\circ \mathcal{M}_1$, recalling that $\mathcal{M}_1$ is defined through (\ref{Forward General MF-FBSDE}) and
 $\mathcal{M}_2$ is defined through (\ref{Backward General MF-FBSDE}). Thus $\mathcal{M}$ maps $L^2_{\mathcal F}(0,T;\mathbb{R}^m)\times L^2_{\mathcal F}(0,T;\mathbb{R}^{m\times (d+1)})$ into itself. To prove the theorem, it is only need to show that $\mathcal{M}$
 is a contraction mapping for some equivalent norm $\|\cdot\|_{\lambda}$. In fact, for  $(Y_i,Z_i)\in L^2_{\mathcal F}(0,T;\mathbb{R}^m)\times L^2_{\mathcal F}(0,T;\mathbb{R}^{m\times (d+1)})$, let $X_i:=\mathcal{M}_1(Y_i,Z_i)$ and $(\overline{Y}_i,\overline{Z}_i):=\mathcal{M}((Y_i,Z_i))$,  from (\ref{estimate 3 mapping M1}), (\ref{estimate 4 mapping M1}), (\ref{estimate 3 mapping M2}) and (\ref{estimate 4 mapping M2}) based on Lemma 7.1, 7.2, we have
\[
\begin{array}
[c]{rl}%
&\|\overline{Y}_1-\overline{Y}_2\|_{\lambda}^2+\|\overline{Z}_1-\overline{Z}_2\|_{\lambda}^2\medskip\\
\leq &\left[\frac{1-e^{-\overline{\lambda}_2T}}{\overline{\lambda}_2}+\frac{
1\vee e^{-\overline{\lambda}_2T}}{(1-k_6K_4)(1\wedge e^{-\overline{\lambda}_2T})}\right]\medskip\\
&\times\left[(k_{11}^2+k_{12}^2)e^{-\lambda T}\mathbb{E}|X_1(T)-X_2(T)|^2
+(k_4+k_{5})K_3\|X_1-X_2\|^2_{\lambda}\right]\medskip\\
\leq &\left[\frac{1-e^{-\overline{\lambda}_2T}}{\overline{\lambda}_2}+\frac{
1\vee e^{-\overline{\lambda}_2T}}{(1-k_6K_4)(1\wedge e^{-\overline{\lambda}_2T})}\right]\medskip\\
&\times\left[(k_{11}^2+k_{12}^2)(1\vee e^{-\overline{\lambda}_1T})
+(k_4+k_{5})K_3\frac{1-e^{-\overline{\lambda}_1T}}{\overline{\lambda}_1}\right]\medskip\\
&\times\left[(k_2K_1+k_9^2)\|Y_1-Y_2\|^2_{\lambda}
+(k_3K_2+k_{10}^2)\|Z_1-Z_2\|^2_{\lambda}\right].
\end{array}
\]
Recalling that $\overline{\lambda}_1:=\lambda-2\lambda_1-k_2K_1^{-1}-k_3K_2^{-1}-2k_1-k_7^2-k_8^2$ and
$\overline{\lambda}_2:=-\lambda-2\lambda_2-(k_4+k_5)K_3^{-1}-k_6K_4^{-1}$. Then
by choosing suitable $\lambda$ (e.g. we can easily choose $\lambda$ big enough such that $\overline{\lambda}_1\ge 1$ and  $\overline{\lambda}_2\leq 0$), the first assertion is immediate.

Now let us prove the second assertion. Since $2(\lambda_1+\lambda_2)<-2k_1-k_6^2-k_7^2-k_8^2$, we can choose $\lambda\in \mathbb{R}$, $0<K_4\leq k_6^{-1}$ and sufficient large $K_1,K_2,K_3$ such that
\[
\overline{\lambda}_1>0, \qquad \overline{\lambda}_2>0, \qquad 1-K_4k_6>0.
\]
Then from (\ref{estimate 3 mapping M1}), (\ref{estimate 5 mapping M1}), (\ref{estimate 3 mapping M2}) and (\ref{estimate 5 mapping M2}),we have
\[
\begin{array}
[c]{rl}%
&\|\overline{Y}_1-\overline{Y}_2\|_{\lambda}^2+\|\overline{Z}_1-\overline{Z}_2\|_{\lambda}^2\medskip\\
\leq &\left[\frac{1}{\overline{\lambda}_2}+\frac{
1}{1-k_6K_4}\right]\times\left[(k_{11}^2+k_{12}^2)e^{-\lambda T}\mathbb{E}|X_1(T)-X_2(T)|^2
+(k_4+k_{5})K_3\|X_1-X_2\|^2_{\lambda}\right]\medskip\\
\leq &\left[\frac{1}{\overline{\lambda}_2}+\frac{1}{1-k_6K_4}\right]\times\left[k_{11}^2+k_{12}^2
+(k_4+k_{5})K_3\frac{1}{\overline{\lambda}_1}\right]\medskip\\
&\times\left[(k_2K_1+k_9^2)\|Y_1-Y_2\|^2_{\lambda}
+(k_3K_2+k_{10}^2)\|Z_1-Z_2\|^2_{\lambda}\right].
\end{array}
\]
This completes the second assertion.
\end{proof}

\end{document}